\newtheorem{definition}{Definition}[section]
\newtheorem{lemma}{Lemma}[section]
\newtheorem{theorem}{Theorem}[section]
\newtheorem{assumption}{Assumption}[section]
\theoremstyle{definition}
\newtheorem{example}{Example}[section]
\newtheorem{remark}{Remark}[section]
\title {Properties of stationary cyclical processes}
\author{
   {\L}ukasz Lenart\thanks{
    \textit{{\L}ukasz Lenart was financed from the funds granted to the Krakow University of Economics.}}
\hspace{.2cm}\\
    Department of Mathematics, \\Krakow University of Economics,\\ ul. Rakowicka 27, 31-510 Krakow, Poland, \\email: lukasz.lenart@uek.krakow.pl}
\date{ }
\begin{document}
\maketitle

\linespread{1.5}

\begin{abstract}
\textcolor{black}{The paper investigates the theoretical properties of zero-mean stationary time series with cyclical components, admitting the representation $y_t=\alpha_t \cos \lambda t + \beta_t \sin \lambda t$, with $\lambda \in (0,\pi]$ and $[\alpha_t\,\, \beta_t]$ following some bivariate process. We diagnose that in the extant literature on cyclic time series, a prevalent assumption of Gaussianity for $[\alpha_t\,\, \beta_t]$ imposes inadvertently a severe restriction on the amplitude of the process. Moreover, it is shown that other common distributions may suffer from either similar defects or fail to guarantee the stationarity of $y_t$. To address both of the issues, we propose to introduce a direct stochastic modulation of the amplitude and phase shift in an almost periodic function. We prove that this novel approach may lead, in general, to a stationary (up to any order) time series, and specifically, to a zero-mean stationary time series featuring cyclicity, with a pseudo-cyclical autocovariance function that may even decay at a very slow rate. \textcolor{black}{The proposed process fills an important gap in this type of models and allows for flexible modeling of amplitude and phase shift}.} 
\end{abstract}

\section{Introduction}
\label{sec:Intro}
While the concept of stationary time series is generally well-established in the literature, there remains a gap in the theory regarding zero-mean stationary time series $\{y_t:\,t\in \mathbb{Z}\}$ (we simplify to $\{y_t\}$) that exhibit cyclic features with unknown frequencies. Namely, little attention seems to be paid to the theoretical properties of the underlying {\it{amplitude process}}, whose statistical characteristics (e.g., expectation, if exists) can measure the maximum distance between extreme deviations of the observed process. For instance, in existing Gaussian processes with pseudo-cyclical autocovariance function (see \cite{Hannan1970}, \cite{Proietti2023}), the relationship between the expectation and standard deviation of the {\it{amplitude process}} is linear, which appears very restrictive. This arises from the fact that the {\it{amplitude process}} is not directly defined, and therefore, its statistical properties are consequences of the assumptions made, often intended to ensure stationarity or facilitate statistical inference (e.g., Gaussianity). The main theoretical limitations (in terms of flexibility) for the amplitude process in existing approaches stem from both the assumption of Gaussianity and the manner in which such cyclical models are specified. Below, we delve into details. In particular, Subsection 1.1 discusses existing zero-mean stationary processes with cyclic behaviors of unknown frequencies, highlighting their theoretical properties, limitations and potential extensions. In Subsection 1.2 we focus on the ambiguity of the amplitude process specification, a problem apparently absent from the existing literature. The following subsection reviews other, non-stationary approaches to modeling cyclical data. Subsection 1.4 presents what is the main contribution of this work, namely a novel specification of the cyclic process, which enhances the flexibility of the amplitude process. Finally, Subsection 1.5 outlines the structure of our paper.  

Before we move one, let us clarify the sense in which the term \textit{stationary} is used throughout the paper. In general, a time series $\{y_t:\, t \in \mathbb{Z}\}$ is said to be \textit{stationary up to order} $m$ if for any $s \leq m$, and $\tau_1,\tau_2,\ldots,\tau_s \in \mathbb{Z}$, and $t \in \mathbb{Z}$, the moment $E(y_{t+\tau_1}y_{t+\tau_2}\ldots y_{t+\tau_s})$ exists and does not depend on time $t$:
\begin{equation}
E(y_{t+\tau_1}y_{t+\tau_2}\ldots y_{t+\tau_s})= E(y_{\tau_1}y_{\tau_2}\ldots y_{\tau_s})<\infty.   
\end{equation}  
In particular, by referring to a process as stationary, we mean a time series that is stationary up to order two (or "weakly stationary", or "stationary in a wide sense").

\subsection{Processes review} 
\label{sec:1.1}

Modeling cyclic phenomena has a long history. The pioneering work of \cite{Yule1927} provides important theoretical preliminaries for modeling cyclic data by introducing disturbances to the right-hand side of the trigonometric identity $A \sin \theta t = 2A \cos \theta \sin \theta (t-1) - A \sin \theta (t-2),$ where $t \in \mathbb{Z}$, $\theta \in [0,2\pi)$, and $A \in \mathbb{R}$. Ultimately, the approach yields a non-stationary $\text{AR}(2)$ process $\{u_t\}$ of the form $u_t=2 \cos \theta  u_{t-1} - u_{t-2} + \epsilon_t$, with a white noise $\{\epsilon_t\}$ and complex roots $e^{i \theta}$ and $e^{-i \theta}$.

Although the above-mentioned process is non-stationary, it has prompted research for a stationary version; see \cite{Kendall1945} for some early-stage developments and review of then existing other approaches. In \cite{Hannan1964}, a stationary seasonal cycle process was defined as $y_t=\alpha_t \cos \tilde \lambda t + \beta_t \sin \tilde \lambda t$, where $\tilde \lambda$ is a seasonal frequency and ${\alpha_t}$ and ${\beta_t}$ are uncorrelated first-order zero-mean Gaussian autoregressive processes with the same autoregressive coefficient $\rho$ (where $|\rho|<1$) and the same variance of the underlying white noises. However, the frequency $\tilde \lambda$ does not necessarily have to be seasonal to ensure the stationarity of such a process, which was quickly adopted in future research. 

In the decades to follow, new concepts of stationary cyclic processes were theoretically investigated. \cite{Hannan1970} proposes a process that is the sum $\sum_{j=1}^{K}z_{t,j}$ of $K$ cyclical components:
\begin{equation}
z_{t,j}=\alpha_{t,j} \cos \lambda_j t + \beta_{t,j} \sin \lambda_j t,
\label{Hannan1964B}
\end{equation}
where for all $j=1,2,\ldots,K$, the frequency $\lambda_j \in (0,\pi]$ is seasonal, and $[\alpha_{t,j}\,\,\, \beta_{t,j}]$ is a bivariate stationary Gaussian process. However, similarly to the single-frequency case, each $\lambda_j$ can be actually any frequency within the interval $(0,\pi]$, without losing the stationarity of the process.

Next, in \cite{Harvey1985}, \cite{Harvey1989}, and \cite{HarveyJaeger1993}, a \textit{stochastic cycle} model with a Gaussian $\text{ARMA}(2,1)$ representation was developed. \cite{HarveyTrimbur2003} and \cite{Trimbur2006} generalized the concept to an \textit{nth-order stochastic cycle} model with a Gaussian $\text{ARMA}(2n,2n-1)$ representation. Yet some other alternative specifications with $\text{ARMA}$ representations based on Gaussian white noise were considered in \cite{LuatiProietti2010}. Further, \cite{GARMA1998} and \cite{Smallwood2003} consider a $k$-factor Gegenbauer ARMA (GARMA) model. Finally, \cite{Proietti2023} extended the approach presented in \ref{Hannan1964B} with a non-seasonal frequency and with $\alpha_t$ and $\beta_t$ being independent fractionally differenced white noise processes (see \cite{Andel1986}) with Gaussian white noises with the same variance. A brief characterization of each of the above-mentioned models is given in Section 3. 

All of the above concepts share a common feature: they are (or can be, as we demonstrate it later in this work) represented as (see \cite{Hannan1970}, \cite{Proietti2023}):
\begin{equation}
y_t=\alpha_t \cos \lambda t + \beta_t \sin \lambda t,
\label{Hannan1964}
\end{equation}
where $[\alpha_t \,\,\,\ \beta_t]$ forms a zero-mean bivariate stationary Gaussian process. This representation naturally allows for interpretation of the amplitude and phase shift in terms of the bivariate Gaussian process $[\alpha_t\,\,\, \beta_t]$. Based on (\ref{Hannan1964}), the amplitude process can be defined as:
\begin{equation}
\text{AMP}_t=\sqrt{\alpha_t^2+\beta_t^{2}},
\label{amplituda}
\end{equation}
which means that the basic characteristics (at a fixed time $t \in \mathbb{Z}$), such as expectation and standard deviation, result from the properties of the distribution of $[\alpha_t \,\,\, \beta_t]$. From a practical standpoint, for a zero-mean stationary cyclic process to be sufficiently flexible in empirical modeling, basic characteristics such as the expectation and standard deviation of the amplitude fluctuations (as in Equation \ref{amplituda}) should not be closely related to each other. In other words, the model specification should enable these quantities to behave freely, with no restraints stemming from their interrelations. 

To highlight the problem, note that the assumption $[\alpha_t \,\,\, \beta_t] \sim N_{2}(\boldsymbol{0},\sigma^2 \textbf{I}_2)$ in representation (\ref{Hannan1964}), ensuring the stationarity of $\{y_t\}$ and ${\text{AMP}_t}$, gives $\mu_{\text{AMP}}=E(\text{AMP}_t)= \sigma_{\text{AMP}}\sqrt{\frac{\pi}{4-\pi}}=\sqrt{\frac{\pi}{2}}\sigma$, where (we omit the straightforward calculations) $\sigma_{\text{AMP}}=\sqrt{E(\text{AMP}_t - \mu_{AMP})^2}=\sqrt{\frac{4-\pi}{2}}\sigma$ is the amplitude's standard deviations. Both quantities, characterizing quite distinct aspects (location and dispersion) of the amplitude process, are driven by the common parameter of $\sigma$, which is very restrictive ($\frac{\mu_{\text{AMP}}}{\sigma_{\text{AMP}}}=\sqrt{\frac{\pi}{4-\pi}}$),  and in empirical modeling may affect statistical inference (e.g. push towards non-stationarity). For that matter, we argue that it can be seen in the empirical results obtained recently by \cite{maddanu2022modelling} (see Figure 6 therein, presenting the amplitude process), \cite{Proietti2023} (see Figure 3 therein, presenting the amplitude process), and by \cite{koopman2008measuring}, \cite{harvey2007trends} (see Figure 9 therein), \cite{GARMA1998}, for example. Conceivably, it may be due to these strong limitations that these models have not gained much popularity in applications, apart from a few studies in the area of business cycles analysis, where fluctuations are generally very irregular, with time-varying, irregular amplitude and phase shift. Moreover, it's worth noticing already that some natural generalizations, such as the bivariate Student's \textit{t}-distribution, skew normal distributions, and other typical distributions, present no valid a solution to the problem, as they exhibit similar strong relations between the expectation and standard deviation of the amplitude process or leads to loss of stationarity, as will be demonstrated in Section 3. Then, we will propose a way to overcome the issue by specifying a novel way for construction of zero-mean stationary cyclic process.

As already argued above, examination of the moments' structure of the amplitude process (and the phase shift, as well) has missed the researchers' scope of attention so far. However, interestingly enough, pertinent derivations have already preoccupied researchers of an unrelated area of electrical and electronics engineering, where, in general, for a bivariate random variable $[X \,\,\, Y]$, the stochastic properties of the amplitude, defined as $\sqrt{X^2+Y^2}$, and phase shift, $\arctan(X/Y)+\mathbb{I}\{X<0\}\pi$, are of key interest; see, e.g., \cite{Nadarajah2016}, \cite{Balakrishnan2009}, and references therein. Some of distributions considered in \cite{Balakrishnan2009} are also of our interest later in this work.

To end this subsection, let us note that similar arguments and considerations as the ones raised above with respect to the amplitude would also apply to the phase shift. Nonetheless, for the sake of our current work's volume, we defer these for future research, and focus here only on some theoretical developments for the amplitude process.

\subsection{The amplitude process ambiguity} Another issue pertaining to zero-mean stationary processes, one that has not been raised in the existing literature (to the best of our knowledge), is the ambiguity of the amplitude, arising from the fact that for any fixed frequency $\lambda\in(0,\pi]$, any zero-mean stationary process $\{y_t\}$ can be represented in infinitely many ways as 
\begin{equation}
y_t=\alpha_t \cos \lambda t + \beta_t \sin \lambda t,     
\end{equation}
with 
\begin{equation}
\begin{split}
[\alpha_t \,\, \beta_t]'=\textbf{R}(-t \lambda) [y_t \,\, y_t^*]'=\left[\begin{array}{cc}
 y_t \cos \lambda t - y_{t}^* \sin \lambda t   \\
 y_t \sin \lambda t + y_{t}^* \cos \lambda t
\end{array}\right]',
\end{split}    
\label{alpha_rep0}
\end{equation} where $\{y_t^*\}$ is any real valued process and $\textbf{R}(z)$ is a rotation matrix of the form 
$\textbf{R}(z)=\left[
\begin{array}{cc}
 \cos z  & \sin z  \\
- \sin z & \cos z \\ 
\end{array}
\right]$, with $z \in \mathbb{R}$. Based on the above representation, the amplitude process related to the frequency $\lambda$ for any stationary process $\{y_t\}$ is $ \text{AMP}_t=\sqrt{\alpha_t^2+\beta_t^{2}}=\sqrt{y_t^2+y_t^{*2}}$. \textcolor{black}{To illustrate the ambiguity problem, let us consider a simple example below.}
\begin{example}
\textcolor{black}{Let $\{y_t\}$ follow a simple $\text{AR}(1)$ process: $y_t=\rho y_{t-1} + \epsilon_{t}$, where $|\rho|<1$, and $\epsilon_t$ is a zero-mean Gaussian white noise with variance $\sigma^2>0$. Three cases of the process $y_t^*$ can be considered:
\begin{itemize}
    \item[C1] For $y_t^*= a y_t$, where $a \in \mathbb{R}$, the amplitude process is a stationary (up to any order) process of the form $\text{AMP}_t=\sqrt{1+a^2}|y_t|$ (with the unconditional rescaled chi distribution with one degree of freedom). Here, $[\alpha_t \,\, \beta_t]$ is a zero-mean process but not stationary, as the second-order (and higher-order) moment structure is time-dependent (calculations are omitted);  
    \item[C2] Assume now that $y_t^*= \rho y_{t-1} + \epsilon_{t}^*$, where $\epsilon_{t}$ is a zero-mean Gaussian white noise with variance $\sigma^2$ and independent from $\epsilon_{t}$. Then, the amplitude process is a stationary (up to any order) process with the unconditional rescaled chi distribution with two degrees of freedom. Elementary calculations (omitted for brevity) show that $[\alpha_t \,\, \beta_t]$ is also stationary up to any order.
    \item[C3] Finally, assume that $y_t^*= y_{t-1}^* + \epsilon_t$. Then, the amplitude process $\text{AMP}_t=\sqrt{y_t^2 + y_t^{*2}}$ is not stationary, and neither is $[\alpha_t \,\, \beta_t]$.
\end{itemize}
}
\end{example}
\textcolor{black}{To limit possible specifications of the amplitude, in this paper, we will require both the amplitude process $\text{AMP}_t$ and $[\alpha_t \,\,\, \beta_t]$ to be stationary. In particular,} we derive the conditions for ${y_t^*}$ under which $[\alpha_t \,\,\, \beta_t]$ is a zero-mean stationary process. 

\subsection{Other non-stationary cyclic processes}
It should be duly noted that there exist other approaches to modeling cyclical fluctuations, which reach beyond the class of stationary time series by either introducing some amplitude or phase shift modulations or employing the idea of almost periodicity (also referred to as cyclostationarity). Regarding the latter, a function $f(t): \mathbb{Z} \to \mathbb{R}$ is called almost periodic if for any $\epsilon >0$, there exists a positive integer $L_{\epsilon}$ such that among any $L_{\epsilon}$ consecutive integers, there is an integer $p_{\epsilon}$ for which $\sup\limits_{t \in \mathbb{Z}}|f(t+p_{\epsilon})-f(t)|<\epsilon$ (see \cite{Corduneanu1989}). Cyclic features can be modeled by means of almost periodically correlated (APC) time series (nonstationary, in general), the class of which encompasses, among others, periodically correlated (PC) series as well covariance stationary sequences (the latter being the object of interest in this work). We say that a time series $\{y_t:\,\, t \in \mathbb{Z}\}$ with finite second moments is PC with a period length $T>1$ if the mean function $\mu(t)=E(y_t)$ and the autocovariance function $B(t,\tau)=\text{cov}(y_t,y_{t+\tau})$ are both periodic at $t \in \mathbb{Z}$ with the same period length $T$ (for any $\tau \in \mathbb{Z}$). Meanwhile, for an APC time series, it is assumed that the mean and autocovariance functions are almost periodic functions of $t$.
The above approach assumes almost periodicity in the first and second moments, which is beyond the scope of our interest in this work. Although the APC and PC processes have been quite popular in the relevant literature, with the examples including \cite{Antoni2009}, \cite{GardnerNapolitanoPaura2006}, \cite{Napolitano2012}, our current attention focuses solely on \textit{stationary} time series of a cyclical nature, i.e., with a zero mean and a {\it pseudo-cyclical} autocovariance function.

Another common approach to modeling cyclicity is by means of amplitude-modulated (AM) time-warped (TW) APC processes (see \cite{NapolitanoGardner2016}, \cite{Napolitano2017}, \cite{Gardner2018}, \cite{Napolitano2019}, \cite{Napolitano2022}) by considering $y(t)=a(t)x(\psi(t))$, where $a(t)$ is a deterministic time-varying amplitude, $x(t)$ is a continuous-time PC process with a real-valued period length $T_0>0$, and $\psi(t)=t+\epsilon(t)$, with $\epsilon(t)$ being some slowly varying and differentiable function. In \cite{Gardner2018}, it was assumed that $a(t) \equiv 1$. The generalization to any deterministic function $a(t)$ was developed in \cite{Napolitano2022} in electrocardiogram modeling. Some related approaches were considered in \cite{Das2021}. Another approach with a time-variable (irregular) rhythm was developed recently in \cite{Lupenko2023}. The common feature of these approaches is the use of purely deterministic functions to modulate the amplitude or period length (time-warped function). The main problem with these generalizations, however, is the lack of theoretical results relating to the possibility of statistical inference. Arguably, they also seem rather sophisticated and thus has found only few applications so far.

\subsection{Cyclic process proposition} In this paper, to address the flexibility and ambiguity of the amplitude process, we have decided to model directly the magnitude and phase shift of cyclical fluctuations. Specifically, the flexibility of our specification draws from an explicit modeling the mean and standard deviation of the amplitude process. We achieve this goal directly by introducing zero-mean stochastic processes $\{A_t:\, t \in \mathbb{Z}\}$ (to disrupt the amplitude $a$) and $\{P_t:\, t \in \mathbb{Z}\}$ (to disrupt the phase shift $\psi$) in the almost periodic function $a\sin(\lambda (t + \psi))$, for $t \in \mathbb{Z}$,  with an unknown frequency $\lambda \in (0,\pi]$. Specifically, our proposition takes the form: \begin{equation}
y_{t}=(a+A_t)\sin[\lambda(t+\psi + P_t)],
\label{model_prep}
\end{equation} and can be proven a zero-mean, stationary (up to any order $m$) time series with a pseudo-cyclical autocovariance function, under suitable conditions derived in this article. We devote Section 4 to discuss the properties of our process in detail, and demonstrate that it is fit for for adequate modeling of the amplitude and phase shift.  

\textcolor{black}{Notice that our model structure (\ref{model_prep}) does not represent something actually distinct from representation (\ref{Hannan1964}), as elementary calculations show that
\begin{equation}
\begin{split}
y_t & = (a+A_t) \sin(\lambda(\psi + P_t)) \cos \lambda t+ (a+A_t) \cos(\lambda(\psi + P_t)) \sin \lambda t   \\
& = \tilde \alpha_t \cos \lambda t+ \tilde \beta_t \sin \lambda t, 
\end{split}
\end{equation}
with $[\tilde \alpha_t\,\, \tilde \beta_t]=[ R_t 
\sin \theta_t\,\,\, R_t \cos \theta_t ]=R_t [ 
\sin \theta_t\,\,\, \cos \theta_t ]$, $\theta_t=\lambda(\psi + P_t)$ and $R_t=a+A_t$. If $R_t$ and $\theta_t$ are assumed to be mutually independent, and that $\theta_t$ follows a uniform distribution on the interval $(0,2 \pi)$, then through some elementary calculations it can be shown that the characteristic function of $[\tilde \alpha_t\,\, \tilde \beta_t]$ at point $\textbf{z}=[z_1\,\, z_2]\in \mathbb{R}^2$ depends only on $\| \textbf{z}\| = \sqrt{z_1^2+z_2^2}$. 
However, here we do not assume in our proposed model that the distribution of $[\tilde \alpha_t\,\, \tilde \beta_t]$ is Gaussian, for it would impose a very strong restriction. As a solution, we focus solely on the distribution of $[\tilde \alpha_t\,\, \tilde \beta_t]$ that results from the ones assumed for $A_t$ and $P_t$.}

\subsection{Article structure}
The structure of the article is as follows. Section 2 presents the theoretical background for zero-mean, stationary processes with a pseudo-cyclical autocovariance function, filling an important gap in the literature. Next, in Section 3, based on the results from Section 2, we characterize existing stationary processes with a pseudo-cyclical autocovariance function, identify possible problems and limitations, and diagnose their causes. In Section 4, we propose a novel concept of a stochastic cycle, along with a comprehensive exposition of its theoretical properties, with a particular emphasis on stationarity (up to order $m$), autocovariance function and power spectral density function. Finally, note that the proofs of all the theorems formulated in the paper, are deferred to the Appendix. 

\section{Theoretical results}

\subsection{Single-frequency case}
We begin with the process 
\begin{equation}
y_t = \alpha_t \cos \lambda t + \beta_t \sin \lambda t,    
\label{proc_yt}
\end{equation}    
featuring a single frequency $\lambda \in (0,\pi]$, under general assumptions regarding the bivariate process $[\alpha_t\,\,\,\beta_t]$, not necessarily ensuring stationarity of $\{y_t:\, t \in \mathbb{Z}\}$.

\begin{theorem}
Let  $\lambda \in (0,\pi]$ and $\{y_t:\, t \in \mathbb{Z}\}$ be defined as
\begin{equation}
 y_t = [\alpha_t\,\,\beta_t]  \cdot [\cos \lambda t \,\, \sin \lambda t]'=\alpha_t   \cos \lambda t + \beta_t \sin \lambda t,
 \label{eq21}
\end{equation}
where $[\alpha_t\,\,\beta_t]$ is a bivariate stationary time series with $1\times 2$ mean vector $\boldsymbol{\mu}=[\mu_1\,\,\mu_2]$ and $2\times 2$ covariance matrix $\boldsymbol{\Omega}(\tau)=E\bigg(\big([\alpha_{t+\tau}\,\,\beta_{t+\tau}]-\boldsymbol{\mu}\big)'\big([\alpha_t\,\,\beta_t]-\boldsymbol{\mu}\big)\bigg)=[\omega_{ij}(\tau)]_{2\times 2}$, $\tau \in \mathbb{Z}$. Then, $\{y_t:\, t \in \mathbb{Z}\}$ given by (\ref{eq21}) is: 
\begin{itemize}
    \item[i)] an APC process with an almost periodic mean function 
\begin{equation}
\mu_{y}(t)=E(y_t)=  [\mu_1\,\,\mu_2]  \cdot [\cos \lambda t \,\, \sin \lambda t]'=\mu_1   \cos \lambda t + \mu_2 \sin \lambda t,  
\label{eq:mean}
\end{equation}
and an almost periodic covariance function of the form
\begin{equation}
\begin{split}
E\big((y_t-\mu_{y}(t)) & (y_{t+\tau}-\mu_{y}(t+\tau))\big)\\&= [\cos \lambda t \,\, \sin \lambda t] \boldsymbol{\Omega}(\tau) [\cos \lambda (t+\tau) \,\, \sin \lambda (t+\tau)]';
\end{split}
\end{equation}
\item[ii)] a zero-mean stationary process if and only if for any $\tau \in \mathbb{Z}$
\begin{equation}
\boldsymbol{\mu}=\boldsymbol{0} \,\,\,\,\, \text{ and } \,\,\,\,\,\, \omega_{11}(\tau)=\omega_{22}(\tau)   \,\,\,\,\,\, \text{ and } \,\,\,\,\,\,  \omega_{12}(\tau)=-\omega_{21}(\tau). 
\label{eq:eq}
\end{equation}
\end{itemize}
\label{tw21}
\end{theorem}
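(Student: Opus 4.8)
The plan is to treat part~i) as a direct moment computation and part~ii) as a consequence of the linear independence of the relevant trigonometric systems over $\mathbb{Z}$.

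For part~i), I would first apply linearity of expectation to $y_t=\alpha_t\cos\lambda t+\beta_t\sin\lambda t$ to obtain $\mu_y(t)=\mu_1\cos\lambda t+\mu_2\sin\lambda t$; since this is a finite trigonometric sum, it is almost periodic in the sense recalled in Subsection~1.3. For the autocovariance I would write $y_t-\mu_y(t)=(\alpha_t-\mu_1)\cos\lambda t+(\beta_t-\mu_2)\sin\lambda t$, multiply this by the analogous expression at time $t+\tau$, expand into four products, take expectations term by term, and identify the four resulting second moments with the entries $\omega_{ij}(\tau)$ of $\boldsymbol{\Omega}(\tau)$ (here stationarity of $[\alpha_t\,\,\beta_t]$ is exactly what removes the residual $t$-dependence inside those entries). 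Collecting the four terms as a bilinear form in $[\cos\lambda t\,\,\sin\lambda t]$ and $[\cos\lambda(t+\tau)\,\,\sin\lambda(t+\tau)]$ yields the stated expression, and almost periodicity in $t$ is again immediate because the $\omega_{ij}(\tau)$ are constants in $t$.

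For part~ii), the key step is to rewrite the bilinear form from part~i) via product-to-sum identities with $A=\lambda t$ and $B=\lambda(t+\tau)$, which separates the covariance into a part depending on $\tau$ only and a genuinely $t$-dependent part: I expect to obtain
\[
\text{cov}(y_t,y_{t+\tau})=\tfrac12\big(\omega_{11}(\tau)+\omega_{22}(\tau)\big)\cos\lambda\tau+\tfrac12\big(\omega_{21}(\tau)-\omega_{12}(\tau)\big)\sin\lambda\tau+\tfrac12\big(\omega_{11}(\tau)-\omega_{22}(\tau)\big)\cos\lambda(2t+\tau)+\tfrac12\big(\omega_{12}(\tau)+\omega_{21}(\tau)\big)\sin\lambda(2t+\tau).
\]
Combined with $\mu_y(t)=\mu_1\cos\lambda t+\mu_2\sin\lambda t$, the process is zero-mean and stationary exactly when $\mu_1\cos\lambda t+\mu_2\sin\lambda t\equiv0$ and the last two summands above vanish identically in $t$ for every $\tau$. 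Sufficiency of~(\ref{eq:eq}) is then immediate: $\mu_1=\mu_2=0$ kills the mean, while $\omega_{11}(\tau)=\omega_{22}(\tau)$ and $\omega_{12}(\tau)=-\omega_{21}(\tau)$ kill the $t$-dependent terms, leaving the pseudo-cyclical autocovariance $\omega_{11}(\tau)\cos\lambda\tau+\omega_{21}(\tau)\sin\lambda\tau$. For necessity, I would invoke the linear independence over $t\in\mathbb{Z}$ of $\{\cos\lambda t,\sin\lambda t\}$ and of $\{1,\cos2\lambda t,\sin2\lambda t\}$: evaluating the two identities at sufficiently many $t$ forces $\mu_1=\mu_2=0$, $\omega_{11}(\tau)=\omega_{22}(\tau)$ and $\omega_{12}(\tau)=-\omega_{21}(\tau)$ for all $\tau$.

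The step I expect to be the main obstacle is this last one, namely making the linear-independence argument watertight and handling the degenerate frequencies. At $\lambda=\pi$ one has $\sin\lambda t\equiv0$ and $\cos2\lambda t\equiv1$, $\sin2\lambda t\equiv0$, so $\beta_t$ disappears from $y_t$ and the $t$-dependent part collapses automatically; a related degeneracy occurs at $\lambda=\pi/2$, where $\cos2\lambda t=(-1)^t$. Dealing with these cleanly calls either for excluding $2\lambda\in 2\pi\mathbb{Z}$ (and the value $\lambda=\pi/2$) and treating them by a short separate argument, or for phrasing the necessity claim so that only the non-degenerate lags are invoked; the remainder of the proof is the routine trigonometric bookkeeping outlined above.
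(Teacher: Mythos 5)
Your proposal is correct, and for part i) and the sufficiency half of part ii) it coincides with the paper's (the paper dismisses i) as elementary and proves sufficiency exactly by your substitution, arriving at a $t$-free autocovariance). For necessity you take a genuinely different, and in fact more careful, route: after the same product-to-sum decomposition isolating the $t$-dependent part $\tfrac12\big(\omega_{11}(\tau)-\omega_{22}(\tau)\big)\cos\lambda(2t+\tau)+\tfrac12\big(\omega_{12}(\tau)+\omega_{21}(\tau)\big)\sin\lambda(2t+\tau)$, you argue via linear independence of $\{1,\cos 2\lambda t,\sin 2\lambda t\}$ over $t\in\mathbb{Z}$, whereas the paper differentiates $E(y_ty_{t+\tau})$ with respect to $t$ and requires the derivative to vanish, implicitly treating $t$ as a continuous variable. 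The two computations agree algebraically, but your discrete-time argument exposes something the paper's derivative argument hides: your caveat about degenerate frequencies is not a gap in your proof but a genuine limitation of the stated "only if" claim. At $\lambda=\pi$ one has $y_t=(-1)^t\alpha_t$, so zero-mean stationarity forces only $\mu_1=0$ and constrains none of $\mu_2$, $\omega_{22}(\tau)$, $\omega_{12}(\tau)$, $\omega_{21}(\tau)$; at $\lambda=\pi/2$ the factor $(-1)^t$ structure forces $\omega_{11}(\tau)=\omega_{22}(\tau)$ only at even lags and $\omega_{12}(\tau)=-\omega_{21}(\tau)$ only at odd lags. So the necessity direction as stated requires $2\lambda\notin\pi\mathbb{Z}$ (equivalently $\lambda\neq\pi/2,\pi$), or a separate statement for those frequencies, and your plan of excluding or treating them separately is the right repair; the paper's proof does not address this. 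One cosmetic point: under (\ref{eq:eq}) your expansion gives $E(y_ty_{t+\tau})=\omega_{11}(\tau)\cos\lambda\tau+\omega_{21}(\tau)\sin\lambda\tau$, while the paper writes the coefficient of $\sin\lambda\tau$ as $\omega_{12}(\tau)$; this is only a sign/indexing convention for the cross-covariance and does not affect the argument.
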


Note that the mean function (\ref{eq:mean}) is constant if and only if $\mu_1=\mu_2=0$. Moreover, $\{y_t\}$ is stationary only if $\omega_{11}(\tau)=\omega_{22}(\tau)$ and $\omega_{12}(\tau)=-\omega_{21}(\tau)$. Then, we easily obtain $E(y_t y_{t+\tau})=\omega_{11}(\tau) \cos \lambda \tau  + \omega_{12}(\tau) \sin \lambda \tau$, which is a more general result than the one commonly found in the literature (see \cite{HarveyTrimbur2003}, \cite{Trimbur2006}, \cite{Proietti2023}), where it is additionally (and unintentionally) assumed that $\omega_{12}(\tau)=0$, which is only one from a sufficient but not necessary condition for the stationarity of $\{y_t\}$. \textcolor{black}{However, this is not the only sufficient condition, as the remark below shows.}

\begin{remark}
\textcolor{black}{Note that independence is not the only sufficient condition for the stationarity up to order $m>2$, and the shape of the distribution for $[\alpha_t\,\, \beta_t]$ plays a crucial role here. Let us consider a few simple examples of independent random variables $\alpha_t$ and $\beta_t$ below, for which the independence guarantees only the stationarity up to order 2 and not necessarily higher.
\begin{itemize}
    \item {\it{Skewed distribution}} with zero expected value, variance equal to one and skewness equal to $\zeta \not = 0$. Elementary calculations (omitted here) yield $E(y_t^3)= \zeta \left(\sin ^3(\lambda  t)+\cos ^3(\lambda  t)\right)$,  which depends on $t$, indicating that $\{y_t\}$ is not a stationary time series up to order 3.
    \item {\it{Logistic distribution}} with zero mean and variance $\frac{\pi ^2 \nu ^2}{3}$ features (we omit the calculations) $E(y_t^4)=\frac{1}{30} \pi ^4 \nu ^4 (\cos (4 \lambda  t)+13)$, which depends on time $t$, implying that such a process is not stationary up to order 4.
    \item {\it{Irwin–Hall distribution} } (which is the distribution of a sum of $n$ independent random variables that are uniformly distributed on the same interval $(-a,a)$; see \cite{johnson1995continuous}) features $E(y_t^4)=\frac{1}{30} a^4 n (10 n-\cos (4 \lambda  t)-3)$, which depends on $t$, indicating that such a process is not stationary up to order 4.
    \item {\it{Scale mixture of the normal distributions}} (see \cite{andrews1974scale}). Assume that $\alpha_t$ and $\beta_t$ are independent and follow the same unconditional distribution, which belongs to the scale mixture of normal distributions with a stochastic representation of the form $R\cdot N$, where $N$ and $R$ are independent random variables, with $N$ following the standard Gaussian distribution and $R$ following a continuous distribution on the interval $(a,b)$, where $0 \leq a < b \leq \infty$. This class includes such distributions as the Student's t distribution, the slash distribution, and many others. Assuming that $E(R^4)<\infty$, elementary calculations yield: $E(y_t^4)=1+E\big[(R_{1}^2-R_{2}^2)^2 \cos^4(\lambda t )\big] $, where $R_1$ and $R_2$ are independent random variables with the same distribution as $R$. This formula demonstrates that $E(y_t^4)$ depends on $t$, indicating that no distribution belonging to this class ensures stationarity up to order 4 (except for the limiting case of $a \to b > 0$, under which $\alpha_t$ and $\beta_t$ tend toward the Gaussian case).  
 \end{itemize}}
 \end{remark}

The fundamental issue we face with the above two-dimensional distributions for $[\alpha_t\,\,\beta_t]$ is that, upon rotating the vector $[\alpha_t\,\,\beta_t]$ by angle $\lambda t$ with the rotation matrix $\textbf{R}(\lambda t)$, the resulting distribution of $[\tilde \alpha_t\,\, \tilde \beta_t]=\textbf{R}(\lambda t)[\alpha_t\,\,\beta_t]$ changes in $t$ and thus is time-dependent. We find that (and elaborate on that in what follows) a solution to this problem lies in assuming such a probability distribution for $[\alpha_t\,\,\beta_t]$ that remains invariant under rotation by any angle. Examples of such distributions include those with their probability density function at $(x,y)\in \mathbb{R}^2$ for $[\alpha_t\,\,\beta_t]$ being proportional to $f(x^2+y^2)$, where $f:[0,\infty) \to [0,\infty)$ is a real-valued function. This leads to a well-known class of the spherical distributions (see \cite{alma991031536379705251}, \cite{gupta2013elliptically}, \cite{fang2018symmetric}).

The following theorem shows that the only distribution for $[\alpha_t \,\, \beta_t]$ such that $\alpha_t$ and $\beta_t$ are independent, and ${y_t}$ is stationary up to any order, is the Gaussian distribution.

\begin{theorem}
Take any $\lambda \in (0,\pi]$. If $\alpha_t$ and $\beta_t$ are mutually independent for any $t \in \mathbb{Z}$, and the process $y_t = \alpha_t \cos \lambda t + \beta_t \sin \lambda t$ is stationary up to any order, then $\alpha_t$ and $\beta_t$ have the same Gaussian distribution. 
\label{tw_Gaussian_independent}
\end{theorem}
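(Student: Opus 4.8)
The plan is to argue at the level of the cumulants of the one-time marginal $y_t$, converting the independence of $\alpha_t$ and $\beta_t$ into additivity/homogeneity of cumulants and converting the stationarity of $\{y_t\}$ into a constancy condition in the angle $\lambda t$. Throughout I use the standing assumption (as in the setup of Theorem \ref{tw21}) that $[\alpha_t\,\,\beta_t]$ is a stationary process, so that $\alpha_t\stackrel{d}{=}\alpha_0$ and $\beta_t\stackrel{d}{=}\beta_0$ for every $t$; write $\kappa_n^{(\alpha)},\kappa_n^{(\beta)}$ for the $n$-th cumulants of $\alpha_0,\beta_0$. Stationarity up to any order of $\{y_t\}$ forces $E(y_t^n)<\infty$ for all $n$; since $y_0=\alpha_0$, $\alpha_0$ has finite moments of all orders, and then for any $t$ with $\sin\lambda t\neq0$ the independence of $\alpha_t,\beta_t$ together with $|\beta_t\sin\lambda t|\le|y_t|+|\alpha_t\cos\lambda t|$ yield finite moments for $\beta_0$ as well, so all the cumulants below are well defined.

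First I would record the key identity: by independence of $\alpha_t$ and $\beta_t$, additivity of cumulants over independent summands, and $n$-homogeneity of the $n$-th cumulant,
\[
\kappa_n(y_t)=(\cos\lambda t)^n\,\kappa_n^{(\alpha)}+(\sin\lambda t)^n\,\kappa_n^{(\beta)} .
\]
Since $\{y_t\}$ is stationary up to any order, every moment $E(y_t^j)$, hence every cumulant $\kappa_n(y_t)$, is independent of $t$. Therefore, for each fixed $n$, the trigonometric function $\theta\mapsto(\cos\theta)^n\kappa_n^{(\alpha)}+(\sin\theta)^n\kappa_n^{(\beta)}$ is constant on the set $\{\lambda t \bmod 2\pi:\, t\in\mathbb Z\}$.

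Now suppose $\lambda/\pi$ is irrational. Then that set is dense in $[0,2\pi)$, and by continuity the function above is constant on all of $\mathbb{R}$. An elementary analysis of this constancy gives: for $n=1$, evaluating at $\theta=0,\pi$ and $\theta=\pm\pi/2$ yields $\kappa_1^{(\alpha)}=\kappa_1^{(\beta)}=0$; for $n=2$, since $a\cos^2\theta+b\sin^2\theta\equiv b+(a-b)\cos^2\theta$, constancy forces $\kappa_2^{(\alpha)}=\kappa_2^{(\beta)}=:\sigma^2$; for odd $n\ge3$, replacing $\theta$ by $\theta+\pi$ negates the function, so the constant is $0$ and then $\theta=0,\pi/2$ give $\kappa_n^{(\alpha)}=\kappa_n^{(\beta)}=0$; for even $n\ge4$, comparing $\theta=0,\pi/2,\pi/4$ yields both $\kappa_n^{(\alpha)}=\kappa_n^{(\beta)}$ and $\kappa_n^{(\alpha)}=2^{\,1-n/2}\kappa_n^{(\alpha)}$, whence $\kappa_n^{(\alpha)}=\kappa_n^{(\beta)}=0$. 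Thus $\alpha_0$ and $\beta_0$ have exactly the cumulant sequence $(0,\sigma^2,0,0,\dots)$ of $N(0,\sigma^2)$; equivalently all their moments agree with those of $N(0,\sigma^2)$, and since the Gaussian moment problem is determinate, $\alpha_0,\beta_0\sim N(0,\sigma^2)$. By the standing stationarity of the pair, every $\alpha_t$ and $\beta_t$ has this same law, which is the claim.

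The step I expect to be the main obstacle is exactly the passage from "constant on $\{\lambda t\bmod2\pi\}$" to "identically constant": it requires $\lambda/\pi\notin\mathbb{Q}$. When $\lambda$ is a rational multiple of $\pi$, only finitely many angles $\lambda t$ occur, constancy at finitely many points does not upgrade to a trigonometric identity, and the marginal-cumulant argument is genuinely insufficient — indeed at $\lambda=\pi$ one has $y_t=(-1)^t\alpha_t$, in which $\beta_t$ plays no role at all. Covering that regime would require using the full strength of stationarity up to any order, i.e. that the joint moments $E(y_{t+\tau_1}\cdots y_{t+\tau_s})$ be free of $t$, together with the stationarity of $[\alpha_t\,\,\beta_t]$, in order to extract enough further constraints; carrying this out — or restricting attention to frequencies incommensurate with $\pi$ — is where the real effort lies.
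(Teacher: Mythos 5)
Your argument is, at its core, the same as the paper's: the paper also works only with the one--dimensional marginal moments of $y_t$, expands $E(y_t^n)$ using the independence of $\alpha_t$ and $\beta_t$, and runs an induction showing that $t$--independence forces all odd moments to vanish and the even moments to coincide with those of $N(0,\sigma^2)$. Your use of cumulants (additivity over independent summands plus $n$--homogeneity) merely compresses that induction into the single identity $\kappa_n(y_t)=\cos^n(\lambda t)\,\kappa_n^{(\alpha)}+\sin^n(\lambda t)\,\kappa_n^{(\beta)}$, after which evaluation at a few angles and determinacy of the Gaussian moment problem finish the proof exactly as the paper's moment computation does. (Both arguments also need the standing assumption that the laws of $\alpha_t,\beta_t$ do not depend on $t$; you state it explicitly, the paper leaves it implicit via Theorem \ref{tw21}.)

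The one point of divergence is the step you flag yourself: you justify the passage from ``constant on $\{\lambda t \bmod 2\pi\}$'' to ``constant in $\theta$'' only when $\lambda/\pi$ is irrational. This is not a defect of your proof relative to the paper's. The paper asserts, without comment, that $E(\alpha_t^{2k+1})\cos^{2k+1}(\lambda t)+E(\beta_t^{2k+1})\sin^{2k+1}(\lambda t)$ is independent of $t$ \emph{if and only if} both expectations vanish (and the analogous claim at even orders), which is exactly the same density-of-angles issue and is false at $\lambda=\pi$: there, as you observe, $y_t=(-1)^t\alpha_t$, the variable $\beta_t$ never enters, and the stated conclusion about $\beta_t$ cannot be recovered --- indeed an i.i.d.\ symmetric non-Gaussian $\alpha_t$ (with all moments finite) together with an arbitrary independent $\beta_t$ gives a process stationary up to any order, so the theorem as written fails at $\lambda=\pi$, and $\lambda=\pi/2$ is similarly degenerate. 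So your proof is complete on the irrational case, and your closing remark correctly locates a gap that belongs to the paper's statement and proof rather than to your argument; the natural repair is precisely what you suggest: restrict to frequencies with $\lambda/\pi\notin\mathbb{Q}$ (or otherwise exclude the degenerate rational angles), or strengthen the hypotheses so that the multi-time moments supply the missing constraints.
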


In the following theorem, we formulate the condition for the distribution of the bivariate process $[\alpha_t \,\, \beta_t]$ which ensure the strict stationarity for $\{y_t\}$.

\begin{theorem}
Assume that for any positive integer $m$ and any $\boldsymbol{\tau}=(\tau_1,\tau_2,\ldots,\tau_s) \in \mathbb{Z}^s$ such that $s \leq m$ and $\tau_1 < \tau_2 < \ldots < \tau_s$, the probability distribution function of the vector $\textbf{S}_{t,\boldsymbol{\tau}}=[\alpha_{t+\tau_1}\,\, \beta_{t+\tau_1} \,\, \alpha_{t+\tau_2}\,\, \beta_{t+\tau_2} \ldots \alpha_{t+\tau_s}\,\, \beta_{t+\tau_s} ]$ at point $(x_{1},x_{1}^{*},x_{2},x_2^{*},\ldots,x_s,x_s^{*}) \in \mathbb{R}^{2s}$ does not depend on $t$ and has the form $f_{\boldsymbol{\tau}}(x_1^2+x_1^{*2},x_2^2+x_2^{*2},\ldots,x_s^2+x_s^{*2})$, where $f_{\boldsymbol{\tau}}:(x_1,x_{1}^{*},x_2,x_2^{*},\ldots,x_s,x_s^{*}) \in \mathbb{R}^{2s} \to \mathbb{R}$. Then the time series ${y_t}$ is strictly stationary.
\label{strictly_stationary}
\end{theorem}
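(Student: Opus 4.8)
The plan is to realize, for each choice of lags, the finite-dimensional vector $(y_{t+\tau_1},\ldots,y_{t+\tau_s})$ as a \emph{fixed} linear image of an orthogonal transform of $\textbf{S}_{t,\boldsymbol{\tau}}$, and then to exploit the rotational form of the law of $\textbf{S}_{t,\boldsymbol{\tau}}$ to pass the $t$-independence of that law on to $(y_{t+\tau_1},\ldots,y_{t+\tau_s})$.

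First I would reduce to strictly increasing lags. Permuting $(\tau_1,\ldots,\tau_s)$ merely permutes the coordinates of $(y_{t+\tau_1},\ldots,y_{t+\tau_s})$, and if two lags coincide the corresponding coordinates are deterministically equal, so $(y_{t+\tau_1},\ldots,y_{t+\tau_s})$ is a fixed (coordinate-duplication) image of the sub-vector indexed by the distinct lags; neither operation can introduce $t$-dependence. Hence it suffices to treat $\boldsymbol{\tau}=(\tau_1,\ldots,\tau_s)$ with $\tau_1<\cdots<\tau_s$ and to show that the law of $(y_{t+\tau_1},\ldots,y_{t+\tau_s})$ does not depend on $t$.

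Next, note that $y_{t+\tau_j}=\alpha_{t+\tau_j}\cos\lambda(t+\tau_j)+\beta_{t+\tau_j}\sin\lambda(t+\tau_j)$ is precisely the first coordinate of $\textbf{R}(\lambda(t+\tau_j))\,[\alpha_{t+\tau_j}\,\,\beta_{t+\tau_j}]'$, with $\textbf{R}(\cdot)$ the rotation matrix introduced in Section~1.2. Setting $\textbf{M}_t=\mathrm{diag}\big(\textbf{R}(\lambda(t+\tau_1)),\ldots,\textbf{R}(\lambda(t+\tau_s))\big)$, a $2s\times 2s$ block-diagonal orthogonal matrix with $\det\textbf{M}_t=1$, the vector $\textbf{M}_t\,\textbf{S}_{t,\boldsymbol{\tau}}'$ has its odd-indexed entries equal to $y_{t+\tau_1},\ldots,y_{t+\tau_s}$ (its even-indexed entries being the auxiliary quantities $y^*$). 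Thus $(y_{t+\tau_1},\ldots,y_{t+\tau_s})'=\textbf{P}\,\textbf{M}_t\,\textbf{S}_{t,\boldsymbol{\tau}}'$ for the fixed $s\times 2s$ coordinate-selection matrix $\textbf{P}$ picking the odd coordinates. The crux is then that $\textbf{M}_t\,\textbf{S}_{t,\boldsymbol{\tau}}'\stackrel{d}{=}\textbf{S}_{t,\boldsymbol{\tau}}'$: the density $g(x_1,x_1^{*},\ldots,x_s,x_s^{*})=f_{\boldsymbol{\tau}}(x_1^2+x_1^{*2},\ldots,x_s^2+x_s^{*2})$ is invariant under each $2\times 2$ block rotation (which preserves $x_j^2+x_j^{*2}$), so $g(\textbf{M}_t^{-1}\textbf{x})=g(\textbf{x})$ for all $\textbf{x}$; applying the orthogonal change of variables (Jacobian $|\det\textbf{M}_t^{-1}|=1$) shows $\textbf{M}_t\,\textbf{S}_{t,\boldsymbol{\tau}}'$ has density $g(\textbf{M}_t^{-1}\textbf{x})=g(\textbf{x})$, the very density of $\textbf{S}_{t,\boldsymbol{\tau}}$. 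Since by hypothesis this density does not depend on $t$, neither does the law of $\textbf{M}_t\,\textbf{S}_{t,\boldsymbol{\tau}}'$, and marginalizing through the fixed map $\textbf{P}$ gives that the law of $(y_{t+\tau_1},\ldots,y_{t+\tau_s})$ does not depend on $t$. As $s$ and $\boldsymbol{\tau}$ were arbitrary, $\{y_t\}$ is strictly stationary.

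The one step that deserves care is this rotational-invariance argument combined with the orthogonal change of variables; the rest (block structure of $\textbf{M}_t$, reduction to distinct ordered lags, marginalization) is bookkeeping. If one wishes to avoid postulating a density, the same conclusion is reached by working with the characteristic function of $\textbf{S}_{t,\boldsymbol{\tau}}$, which by the identical block-orthogonality is a function of $(\|\textbf{z}_1\|^2,\ldots,\|\textbf{z}_s\|^2)$ alone (writing $\textbf{z}_j$ for the $j$-th coordinate pair of $\textbf{z}$) and is therefore invariant under $\textbf{z}\mapsto\textbf{M}_t'\textbf{z}$, yielding $\textbf{M}_t\,\textbf{S}_{t,\boldsymbol{\tau}}'\stackrel{d}{=}\textbf{S}_{t,\boldsymbol{\tau}}'$ directly and hence the same conclusion.
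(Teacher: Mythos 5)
Your proposal is correct and follows essentially the same route as the paper: both represent the relevant finite-dimensional vector as a block-diagonal orthogonal (rotation) transform of $\textbf{S}_{t,\boldsymbol{\tau}}$, use that each $2\times 2$ rotation preserves $x_j^2+x_j^{*2}$ together with the unit Jacobian to conclude the transformed vector has the same $t$-independent density $f_{\boldsymbol{\tau}}$, and then marginalize. The only cosmetic difference is that the paper factors the transform as a fixed lag-dependent rotation times a common $\textbf{R}(\lambda t)$-block matrix and keeps the auxiliary $y^*$ coordinates, whereas you absorb everything into one matrix $\textbf{M}_t$ and project with $\textbf{P}$; your remarks on repeated/unordered lags and the characteristic-function variant are harmless additions.
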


The following theorem addresses the properties of the $\{y_t\}$ process when $\alpha_t$ and $\beta_t$ are dependent and given by linear filters based on coordinates of spherically distributed bivariate IID sequence.


\begin{theorem}
Let $\alpha_t=\sum\limits_{k=0}^{\infty} \psi_{k} \epsilon_{t}$ and $\beta_t=\sum\limits_{k=0}^{\infty} \psi_{k} \epsilon_{t}^*$, with $\sum\limits_{k=0}^{\infty}|\psi_k|<\infty$, where $[\epsilon_{t}\,\,\epsilon_{t}^*]$ is IID with a zero-mean spherical distribution with the probability distribution function $f(x^2+y^2)$ at $(x,y)\in \mathbb{R}^2$, where $f:[0,\infty) \to [0 ,\infty)$ is some real-valued function. Then,
\begin{itemize}
    \item[i)] $[\alpha_t\,\,\beta_t]$ follows a spherical distribution;
    \item[ii)] the process $y_{t}=\alpha_t \cos \lambda t + \beta_t \sin \lambda t$ can be represented as $y_t=\sum\limits_{k=0}^{\infty} \psi_{k} [ \cos (\lambda k) \zeta_{t-k} +\sin (\lambda k) \zeta_{t-k}^*] $, where $[\zeta_t\,\,\zeta_t^*]'=\textbf{\emph{R}}(\lambda t) [\epsilon_{t}\,\,\epsilon_{t}^*]'$ is a zero-mean white noise with the same probability distribution function as $[\epsilon_{t}\,\,\epsilon_{t}^*]'$;
    \item[iii)] for any positive integer $m$, if $E|\epsilon_t|^m<\infty$, then $y_t$ is stationary up to order $m$;
\item[iv)] $y_t$ is strictly stationary.
\end{itemize} 
\label{tw_representation_spherical}
\end{theorem}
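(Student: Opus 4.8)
The plan is to establish parts (i)--(iv) in order, leaning on the fact that a linear filter with absolutely summable coefficients applied to an IID spherical sequence preserves sphericity, and on Theorem~\ref{strictly_stationary} for the strict-stationarity conclusion. First, for part (i): since $[\epsilon_t\,\,\epsilon_t^*]$ is IID with a spherical distribution, its characteristic function has the form $\varphi(\textbf{z})=g(\|\textbf{z}\|^2)$ for some scalar function $g$. I would compute the joint characteristic function of any finite collection $[\alpha_{t_1}\,\,\beta_{t_1},\ldots,\alpha_{t_n}\,\,\beta_{t_n}]$. Because $\alpha_t$ and $\beta_t$ use the \emph{same} coefficient sequence $\{\psi_k\}$ but are driven by the two independent coordinates $\{\epsilon_t\}$ and $\{\epsilon_t^*\}$, each innovation pair $[\epsilon_{t-k}\,\,\epsilon_{t-k}^*]$ enters with a scalar multiple $\psi_k$ of the identity, so the quadratic form in the exponent collapses to something depending only on sums of squares; summability of $\{\psi_k\}$ (together with, say, boundedness of the spherical characteristic function) justifies the interchange of limit and product. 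This yields that the finite-dimensional characteristic functions depend only on the relevant norms, i.e.\ $[\alpha_t\,\,\beta_t]$ is spherical (and jointly so across times).

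For part (ii), the computation is essentially algebraic: substitute $\alpha_t=\sum_k\psi_k\epsilon_{t-k}$, $\beta_t=\sum_k\psi_k\epsilon_{t-k}^*$ into $y_t=\alpha_t\cos\lambda t+\beta_t\sin\lambda t$, and regroup terms indexed by $t-k$. The coefficient of $\epsilon_{t-k}$ is $\psi_k\cos\lambda t$ and of $\epsilon_{t-k}^*$ is $\psi_k\sin\lambda t$; rewriting $\cos\lambda t=\cos\lambda(t-k)\cos\lambda k-\sin\lambda(t-k)\sin\lambda k$ and similarly for $\sin\lambda t$, and collecting, one identifies the pair $[\zeta_{t-k}\,\,\zeta_{t-k}^*]'=\textbf{R}(\lambda(t-k))[\epsilon_{t-k}\,\,\epsilon_{t-k}^*]'$, giving the stated representation. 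That $[\zeta_t\,\,\zeta_t^*]$ is white noise with the \emph{same} marginal law as $[\epsilon_t\,\,\epsilon_t^*]$ follows from rotational invariance of the spherical distribution: $\textbf{R}(\lambda t)$ is orthogonal, so it leaves the law unchanged, and independence across $t$ is preserved because each $t$ gets its own independent innovation; one should note that the joint law across different $t$ is also that of an IID spherical sequence since $\textbf{R}(\lambda t)$ acts block-diagonally.

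For part (iii), I would use the representation from (ii) to compute a general moment $E(y_{t+\tau_1}\cdots y_{t+\tau_s})$ with $s\le m$. Expanding each $y$ as an absolutely convergent series and using $E|\epsilon_t|^m<\infty$ (hence finiteness of all mixed moments up to order $m$ of the pair, via sphericity and Hölder) to justify term-by-term expectation, the expectation becomes a sum over multi-indices of products of $\psi$'s times moments of the $\zeta$'s. The key point is that each such moment of the $\zeta$'s is, by sphericity, a rotation-invariant function of the innovation pair and therefore equals the corresponding moment of the $\epsilon$'s — it does \emph{not} depend on $t$ — while the trigonometric factors, after the regrouping in (ii), combine into functions of the lag differences $\tau_i-\tau_j$ and $k$-indices only. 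So the whole expression is time-invariant; finiteness follows from $\sum|\psi_k|<\infty$ and the moment bound. Part (iv) is then immediate from Theorem~\ref{strictly_stationary}: part (i) (extended to the joint finite-dimensional distributions, which the argument for (i) already delivers) shows that $\textbf{S}_{t,\boldsymbol\tau}$ has a $t$-free density of the required spherical form $f_{\boldsymbol\tau}(x_1^2+x_1^{*2},\ldots)$, so strict stationarity follows.

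The main obstacle I anticipate is part (i) done at the level of \emph{joint} finite-dimensional distributions (needed for (iv)), rather than just the one-time marginal: one must check that applying the time-indexed rotations $\textbf{R}(\lambda t)$ does not destroy the spherical structure of the stacked vector, and handle the infinite-sum/limit interchange carefully — ideally by working with characteristic functions (where absolute summability of $\{\psi_k\}$ and boundedness of $g$ make the convergence routine) rather than densities, since the density form asserted in Theorem~\ref{strictly_stationary} may require an additional absolute-continuity argument or should be read at the level of distributions.
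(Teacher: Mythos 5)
Your parts (i)--(iii) are essentially the paper's own argument: (i) is the same characteristic-function computation, which the paper writes explicitly as $\prod_{k=0}^{\infty}\phi_{\epsilon}(|\psi_k|\cdot\|\textbf{z}\|)$; (ii) is the same algebra, which the paper packages as the matrix identity $[y_t\,\,y_t^*]'=\textbf{R}(\lambda t)[\alpha_t\,\,\beta_t]'$ after noting that the rotation leaves the law $f(x^2+y^2)$ unchanged; and for (iii) the paper, like you, simply reduces to the representation from (ii).

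The genuine gap is in your route to (iv), and in the claim feeding it from (i). You assert that the stacked vector $\textbf{S}_{t,\boldsymbol{\tau}}$ has a $t$-free density of the form $f_{\boldsymbol{\tau}}(x_1^2+x_1^{*2},\ldots,x_s^2+x_s^{*2})$, so that Theorem \ref{strictly_stationary} applies. This is false for the filtered process in general: the joint characteristic function of $(\alpha_{t+\tau_1},\beta_{t+\tau_1},\ldots,\alpha_{t+\tau_s},\beta_{t+\tau_s})$ is $\prod_{u}\phi_{\epsilon}\left(\left\|\sum_{j}\psi_{t+\tau_j-u}\,\textbf{z}_j\right\|\right)$, and the norm inside involves the cross terms $z_jz_l+z_j^*z_l^*$, not only the individual $\|\textbf{z}_j\|$. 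Concretely, take Gaussian $[\epsilon_t\,\,\epsilon_t^*]$ with $\psi_0=\psi_1=1$: the joint density of $(\alpha_t,\beta_t,\alpha_{t+1},\beta_{t+1})$ contains the term $x_1x_2+x_1^*x_2^*$ and hence is not a function of $x_1^2+x_1^{*2}$ and $x_2^2+x_2^{*2}$ alone; only invariance under a \emph{common} rotation applied to all pairs survives filtering, which is weaker than the stated hypothesis of Theorem \ref{strictly_stationary} (though it is all its proof actually uses). The repair is immediate with ingredients you already have: by your own part (ii), $[\zeta_t\,\,\zeta_t^*]$ is IID with law free of $t$, and $y_t=\sum_{k}\psi_k[\cos(\lambda k)\zeta_{t-k}+\sin(\lambda k)\zeta_{t-k}^*]$ is a fixed, time-invariant measurable function of the shifted IID sequence, so $\{y_t\}$ is strictly stationary directly; (iii) then follows from this together with $E|\epsilon_t|^m<\infty$ and $\sum_k|\psi_k|<\infty$. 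Accordingly, the parenthetical in your part (i) that sphericity holds ``jointly so across times'' should be dropped or weakened to invariance under a common rotation, since full joint sphericity fails for the same reason.
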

Notice, however, that should one opt for the components to be independent, then there exists only one such  spherical distribution: the multivariate normal distribution (with zero mean and the same variance for both coordinates) which is known as Maxwell's theorem.

\subsection{Amplitude process properties and inverse coefficient of variation}

As already mentioned in Subsection 1.2, if the process $\{y_t\}$ is stationary, then the representation of the form (\ref{eq21}) is not unique even if we require that $[\alpha_t \,\, \beta_t]$ is a zero-mean stationary process. We take this into account in the following theorem.

\begin{theorem}
Let $\{y_t\}$ be any zero-mean stationary time series. Then, for any frequency $\lambda \in (0,\pi]$, the process $\{y_t\}$ can be represented as  
\begin{equation}
 y_t = \alpha_t \cos \lambda t + \beta_t \sin \lambda t,  
 \label{y_rep}
\end{equation}
where $[\alpha_t \,\, \beta_t]$ is a zero-mean stationary process of the form 
\begin{equation}
\begin{split}
[\alpha_t \,\, \beta_t]'=\textbf{\emph{R}}(-t \lambda) [y_t \,\, y_t^*]'
\end{split}    
\label{alpha_rep}
\end{equation}
and $[y_t\,\, y_t^*]$ is a bivariate zero-mean stationary process with the \textcolor{black}{covariance matrix} $\boldsymbol{\Gamma}(\tau)=E\bigg( [y_{t+\tau}\,\, y_{t+\tau}^*]'[y_t\,\, y_t^*] \bigg)=[\gamma_{ij}(\tau)]_{2 \times 2}$ such that for any $\tau \in \mathbb{Z}$ we have $\gamma_{11}(\tau)=\gamma_{22}(\tau)$ and $\gamma_{12}(\tau)=-\gamma_{21}(\tau)$. In addition,  
\begin{equation}
\boldsymbol{\Omega}(\tau)=E([\alpha_{t+\tau} \, \beta_{t+\tau}]'[\alpha_t \,\, \beta_t])=  \boldsymbol{\Gamma}(\tau) \textbf{\emph{R}} (-\lambda \tau). 
\end{equation}
In particular, if for $\tau_0 \in \mathbb{Z}$ we have that $ \gamma_{11}(\tau_0)   \sin \lambda \tau =  \gamma_{12}(\tau_0) \cos \lambda \tau,$ 
then $\boldsymbol{\Omega}(\tau_0)$ is a diagonal matrix of the form  
\begin{equation}
 \boldsymbol{\Omega}(\tau_0)=(\gamma_{11}(\tau_0)   \cos \lambda \tau_0 +  \gamma_{12}(\tau_0) \sin \lambda \tau_0)\textbf{\emph{I}}_2.   
 \label{eq:omega2}
\end{equation}
\label{tw:22}
\end{theorem}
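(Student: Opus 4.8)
The plan is to read the statement as an existence-cum-properties claim and to reduce everything to the elementary algebra of planar rotations. The single observation doing all the work is that the set of $2\times 2$ matrices of the form $a\,\textbf{I}_2+b\,\textbf{K}$, where $\textbf{K}=\left[\begin{smallmatrix}0&1\\-1&0\end{smallmatrix}\right]$, forms a commutative algebra isomorphic to $\mathbb{C}$ (indeed $\textbf{K}^2=-\textbf{I}_2$), to which both every rotation $\textbf{R}(z)=\cos z\,\textbf{I}_2+\sin z\,\textbf{K}$ and every covariance matrix obeying the symmetry $\omega_{11}=\omega_{22}$, $\omega_{12}=-\omega_{21}$ belong; in particular such matrices all commute, and $\textbf{R}(z)'=\textbf{R}(-z)$, $\textbf{R}(a)\textbf{R}(b)=\textbf{R}(a+b)$.

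First I would dispose of the representation identity: for any real-valued $\{y_t^*\}$, setting $[\alpha_t\,\,\beta_t]'=\textbf{R}(-t\lambda)[y_t\,\,y_t^*]'$ gives $\alpha_t\cos\lambda t+\beta_t\sin\lambda t=[\cos\lambda t\,\,\sin\lambda t]\,\textbf{R}(-t\lambda)\,[y_t\,\,y_t^*]'=[1\,\,0][y_t\,\,y_t^*]'=y_t$, so that (\ref{y_rep})--(\ref{alpha_rep}) hold automatically and all the content lies in the choice of $\{y_t^*\}$. Next I would exhibit one admissible $\{y_t^*\}$. The quickest choice, valid for every zero-mean stationary $\{y_t\}$, is to let $\{y_t^*\}$ be a copy of $\{y_t\}$ on an enlarged probability space, independent of $\{y_t\}$: then $[y_t\,\,y_t^*]$ is bivariate, zero-mean and weakly stationary with $\boldsymbol{\Gamma}(\tau)=E(y_{t+\tau}y_t)\,\textbf{I}_2$, so that $\gamma_{11}=\gamma_{22}$ and $\gamma_{12}=\gamma_{21}=0$, which satisfies the required symmetry trivially. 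I would also record the more informative construction: when the spectral measure $F_y$ of $\{y_t\}$ charges neither $0$ nor $\pi$, one may take $\{y_t^*\}$ to be the discrete Hilbert transform of $\{y_t\}$, i.e.\ $y_t^*=\int_{(-\pi,\pi]}e^{it\omega}(-i\,\mathrm{sgn}\,\omega)\,dZ_y(\omega)$ via the spectral representation of $\{y_t\}$; since $(\mathrm{sgn}\,\omega)^2=1$ for $F_y$-almost every $\omega$, one gets $\gamma_{22}(\tau)=\gamma_{11}(\tau)$, whereas $\gamma_{12}(\tau)=\int e^{i\tau\omega}(i\,\mathrm{sgn}\,\omega)\,dF_y(\omega)=-\gamma_{21}(\tau)$ is genuinely odd in $\tau$, so that the concluding diagonalization clause is then not vacuous.

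With $\{y_t^*\}$ fixed I would compute $\boldsymbol{\Omega}$. Transposing (\ref{alpha_rep}) to $[\alpha_t\,\,\beta_t]=[y_t\,\,y_t^*]\,\textbf{R}(t\lambda)$ and forming the outer product under the expectation,
\[
\boldsymbol{\Omega}(\tau)=\textbf{R}(-(t+\tau)\lambda)\,\boldsymbol{\Gamma}(\tau)\,\textbf{R}(t\lambda)=\textbf{R}(-\tau\lambda)\,\textbf{R}(-t\lambda)\,\boldsymbol{\Gamma}(\tau)\,\textbf{R}(t\lambda),
\]
and since $\boldsymbol{\Gamma}(\tau)=\gamma_{11}(\tau)\textbf{I}_2+\gamma_{12}(\tau)\textbf{K}$ commutes with $\textbf{R}(t\lambda)$, the middle factor collapses to $\boldsymbol{\Gamma}(\tau)$, leaving $\boldsymbol{\Omega}(\tau)=\textbf{R}(-\lambda\tau)\,\boldsymbol{\Gamma}(\tau)=\boldsymbol{\Gamma}(\tau)\,\textbf{R}(-\lambda\tau)$, which is free of $t$; combined with $E[\alpha_t\,\,\beta_t]'=\textbf{R}(-t\lambda)\boldsymbol{0}=\boldsymbol{0}$ this shows $[\alpha_t\,\,\beta_t]$ is zero-mean stationary and yields the stated expression for $\boldsymbol{\Omega}$. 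If one also wants the converse --- that every representation (\ref{y_rep}) with $[\alpha_t\,\,\beta_t]$ zero-mean stationary is of this form --- it follows by inverting to $[y_t\,\,y_t^*]'=\textbf{R}(t\lambda)[\alpha_t\,\,\beta_t]'$ and invoking Theorem \ref{tw21}(ii), which forces $\boldsymbol{\Omega}(\tau)$ itself into the $a\textbf{I}_2+b\textbf{K}$ family, after which the same commutation makes $[y_t\,\,y_t^*]$ jointly stationary with $\boldsymbol{\Gamma}(\tau)=\textbf{R}(\lambda\tau)\boldsymbol{\Omega}(\tau)$, again in that family. For the final clause, reading the ``$\tau$'' of the hypothesis as ``$\tau_0$'', I would just multiply out $\boldsymbol{\Gamma}(\tau_0)\textbf{R}(-\lambda\tau_0)$: the off-diagonal entries are $\pm\bigl(\gamma_{12}(\tau_0)\cos\lambda\tau_0-\gamma_{11}(\tau_0)\sin\lambda\tau_0\bigr)$, which vanish by hypothesis, and both diagonal entries equal $\gamma_{11}(\tau_0)\cos\lambda\tau_0+\gamma_{12}(\tau_0)\sin\lambda\tau_0$, giving exactly (\ref{eq:omega2}).

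I expect the only genuine obstacle to be the existence step --- producing, for an arbitrary zero-mean stationary $\{y_t\}$, a companion $\{y_t^*\}$ with the same autocovariance and an odd cross-covariance, so that $[y_t\,\,y_t^*]$ is jointly stationary with $\boldsymbol{\Gamma}$ in the required symmetric family. The independent-copy device settles this unconditionally and cheaply; the Hilbert-transform alternative is more natural (and is what keeps the diagonalization clause from being vacuous) but needs care when $F_y$ puts mass on $\omega\in\{0,\pi\}$, where $(\mathrm{sgn}\,\omega)^2\neq 1$ --- there one either adjusts the filter on the deterministic periodic part of $\{y_t\}$ or simply uses the independent copy on that part. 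Everything else --- the representation identity, the formula $\boldsymbol{\Omega}(\tau)=\boldsymbol{\Gamma}(\tau)\textbf{R}(-\lambda\tau)$, and the diagonalization --- is routine bookkeeping once the ``$a\textbf{I}_2+b\textbf{K}\cong\mathbb{C}$'' remark is in hand.
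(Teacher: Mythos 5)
Your proof is correct and follows essentially the same route as the paper: verify the substitution identity, compute the covariance of $[\alpha_t \,\, \beta_t]$ and use the fact that $\boldsymbol{\Gamma}(\tau)$ (having the structure $\gamma_{11}\textbf{I}_2+\gamma_{12}\textbf{K}$, hence commuting with every rotation) kills the $t$-dependence and yields $\boldsymbol{\Omega}(\tau)=\boldsymbol{\Gamma}(\tau)\textbf{R}(-\lambda\tau)$, then multiply out $\boldsymbol{\Gamma}(\tau_0)\textbf{R}(-\lambda\tau_0)$ for the diagonalization clause. The only difference is your explicit construction of an admissible companion $\{y_t^*\}$ (independent copy, or discrete Hilbert transform), which the paper's proof leaves implicit; this is a harmless and in fact clarifying addition rather than a divergence.
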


From the theorem above it follows that the amplitude process related to the frequency $\lambda$ takes the form 
\begin{equation}
\text{AMP}_t=\sqrt{\alpha_t^2 + \beta_t^2}=\sqrt{y_t^2+y_t^{*2}},    
\end{equation}
which means that the distribution of amplitude process at time $t$ is determined only by the distribution of $y_t$ and $y_t^*$ or, equivalently, the distribution of $[\alpha_t \,\, \beta_t]$.

Apart from the conditions for $\{[\alpha_t\,\,\,\beta_t]: \, t \in \mathbb{Z}\}$ ensuring the stationarity of $\{y_t\}$, it seems natural to consider that the amplitude process $\{\sqrt{\alpha_t^2+\beta_t^2}: \, t \in \mathbb{Z}\}$ is a stationary process, as well. Therefore, based on the theorem above, we formulate the following assumption.
\begin{assumption}
Let $[\alpha_t \,\,\, \beta_t]$ be a zero-mean stationary bivariate process such that $\{y_t\}$ given by (\ref{proc_yt}) is stationary and the amplitude process $\emph{AMP}_t=\sqrt{\alpha_t^2 + \beta_t^2}$ is stationary with continuous probability distribution at time $t \in \mathbb{Z}$ given by $f_{t}(x,x^*)$ at point $(x,x^*) \in \mathbb{R}^2$.
\label{ass1}
\end{assumption}

Under Assumption \ref{ass1} let $f_t(x,x^*)$ be the probability distribution function (pdf) of $[\alpha_t \,\, \beta_t]$ at time $t$. Then, the pdf of the amplitude $\text{AMP}_t$ is simply
\begin{equation}
f_{\text{AMP}_t}(\xi)=\int_{0}^{2 \pi} \xi  f_t(\xi \cos \theta, \xi \sin \theta) \text{d}\theta,
\label{pdfAMP}
\end{equation} where $\xi \in [0,\infty)$. If we additionally assume that the distribution of $[\alpha_t \,\, \beta_t]$ at time $t$ is spherical such that $f_{t}(x,x^*)=g_t(x^2+x^{*2})$ for some $g_t:\mathbb{R}\to \mathbb{R}$, then we have
\begin{equation}
f_{\text{AMP}_t}(\xi)=2 \pi \xi  g_t(\xi^2).
\label{pdfAMP}
\end{equation}
Under Assumption \ref{ass1}, we introduce the notation: $\mu_{\text{AMP}}=E(\text{AMP}_t)$ and  $\sigma_{\text{AMP}}^2=Var(\text{AMP}_t)$. Then, the inverse of a common coefficient of variation of the amplitude does not depend on $t$ and can be written as    
\begin{equation}
\text{ICV}_{\text{AMP}}=\frac{\mu_{\text{AMP}}}{\sigma_{\text{AMP}}}.
\label{relationAMP}
\end{equation}
If $\text{ICV}_{\text{AMP}}$ is a constant not depending on any parameters, then (\ref{relationAMP}) yields a strictly linear relation between the amplitude's mean and standard deviation. Notably, this is exactly the case under the typical choice of a bivariate zero-mean, uncorrelated and common-variance Gaussian distribution for $[\alpha_t \,\, \beta_t]$. Then, the amplitude follows simply a chi distribution with two degrees of freedom, and thus 
\begin{equation}
\text{ICV}_{\text{AMP}}=\sqrt{\frac{\pi}{4-\pi}},
\label{ICV_Gaussian}
\end{equation} which introduces an extremely tight, deterministic relation between the amplitude's first two moments. 

\begin{remark}
In relation to the Gaussian case mentioned above, let us assume that second order zero-mean IID process $[\alpha_t \,\, \beta_t]$ has  continuous bivariate distribution with a probability density function $f(x,x^*)=\frac{1}{\sigma^2}g(\frac{x}{\sigma},\frac{x^*}{\sigma})$ depending only on one, scale parameter $\sigma >0$, with the density $g(x,y)$ not depending on any parameters, such that $E\big(\frac{\alpha_t^2}{\sigma^2}\big)=E\big(\frac{\beta_t^2}{\sigma^2}\big)=1$. Then, it is easy to show that $\text{ICV}_{\text{AMP}}$ is constant and assumes the form $\text{ICV}_{\text{AMP}}=\frac{C_g}{\sqrt{2-C_g^2}}$, where $C_g=E\bigg(\sqrt{\frac{\alpha_t^2}{\sigma^2}+\frac{\beta_t^2}{\sigma^2}}\bigg)= \int\limits_{\mathbb{R}^2} \sqrt{x_1^2 + x_2^2}g(x_1,x_2) \text{d}x_1 \text{d}x_2$ does not depend on $\sigma$. \textcolor{black}{To sum up, any  $[\alpha_t \,\, \beta_t]$ with distribution depending only on one scale parameter can introduces strong limitations in modeling the amplitude of cyclical fluctuations.}
\end{remark}

In view of the above, one may seek for alternative specifications of the distribution of $[\alpha_t \,\, \beta_t]$ \textcolor{black}{(uncorrelated but not necessarily independent)}. Below, we consider a few such alternatives, presenting the resulting pdf's and inverse coefficients of variation of the amplitude, and discussing their limitations. \textcolor{black}{Cases A-D concern situations that can easily degenerate into the case of the Gaussian distribution for $[\alpha_t \,\, \beta_t]$. The final case (E) discusses statistical properties of the polar coordinates for $[\alpha_t \,\, \beta_t]$ in examples that do not reduce to the Gaussian case.}
\begin{itemize}
    \item[A.] {\it Student's t-distribution.} If $[\alpha_t \,\, \beta_t]$ follows a bivariate Student's \textit{t}-distribution with $\nu>2$ degrees of freedom and a diagonal covariance matrix $\sigma^2 \textbf{I}_2$ (see \cite{kotz2004multivariate}), then $\text{ICV}_{\text{AMP}}=\text{ICV}_{\text{AMP}}(\nu)=\pi  \sqrt{\frac{1}{\frac{2 \pi  (\nu -2) \Gamma \left(\frac{\nu -2}{2}\right)^2}{\Gamma \left(\frac{\nu -1}{2}\right)^2}-\pi ^2}}$ can be shown to be an increasing function of $\nu>2$ (we omit derivation), such that $\lim\limits_{\nu \to 0^{+}}\text{ICV}_{\text{AMP}}(\nu)=0$ and $\lim\limits_{\nu \to \infty}\text{ICV}_{\text{AMP}}(\nu)=\sqrt{\frac{\pi}{4-\pi}}$ and thus still restricting severely the flexibility of both the mean and standard deviation of the amplitude.
    \item[B.]  {\it{Kotz-type elliptical distribution.}} Following \cite{Balakrishnan2009} (see Section 13.6.1 therein, with $\rho=0$ to ensure a zero correlation, \textcolor{black}{although the coordinates remain dependent}), one can easily obtain that $$
    f_{\text{AMP}}(\xi)=2 s \xi ^{2 N-1} r^{N/s} e^{-r \xi ^{2 s}}/\Gamma \left(\frac{N}{s}\right),$$ which is a generalized gamma distribution with the pdf given by $f_{\text{AMP}}(\xi)=\tilde{\gamma } e^{-\left(\frac{\xi -\mu}{\tilde{\beta }}\right)^{\tilde{\gamma }}} \left(\frac{\xi-\mu }{\tilde{\beta }}\right)^{\tilde{\alpha } \tilde{\gamma }} (\xi-\mu)^{-1} / \Gamma \left(\tilde{\alpha }\right)$, with shape parameters $\tilde \alpha = \frac{N}{s}$ and $\tilde \gamma= 2 s$, a scale parameter $\tilde \beta = \left(\frac{1}{r}\right)^{\frac{1}{2 s}}$, and a location parameter $\mu=0$. 
    Straightforward calculations lead to the following expression for the inverse coefficient of variation: 
    \begin{equation}
    \text{ICV}_{\text{AMP}}=\sqrt{\frac{\Gamma \left(\frac{N}{s}+\frac{1}{2 s}\right)^2}{\Gamma \left(\frac{N}{s}+\frac{1}{s}\right) \Gamma \left(\frac{N}{s}\right)-\Gamma \left(\frac{N}{s}+\frac{1}{2 s}\right)^2}}\label{ICV_Kotz}    
    \end{equation}
    and can take any value from $\mathbf{}{R}_{+}$. 
    The formula reveals no strong restrictions between the mean and standard deviation of the amplitude distribution. Despite this apparent advantage, there exist no such approaches in the literature that would yield the Kotz-type elliptical distribution for the amplitude of $\{y_t\}$. 
    Incidentally, also note that under $N=s=1$, the Kotz-type elliptical distribution reduces to the bivariate Gaussian distribution (with $\text{ICV}_{\text{AMP}}$ given by (\ref{relationAMP})). Therefore, the former represents a generalization of the normal case. For an illustration, Figure \ref{fig:1} depicts the pdf of a Kotz-type elliptical distribution under $r=1$, $N=20$, $s=1$.
\begin{figure}[h]
    \centering
    \includegraphics[width=10 cm]{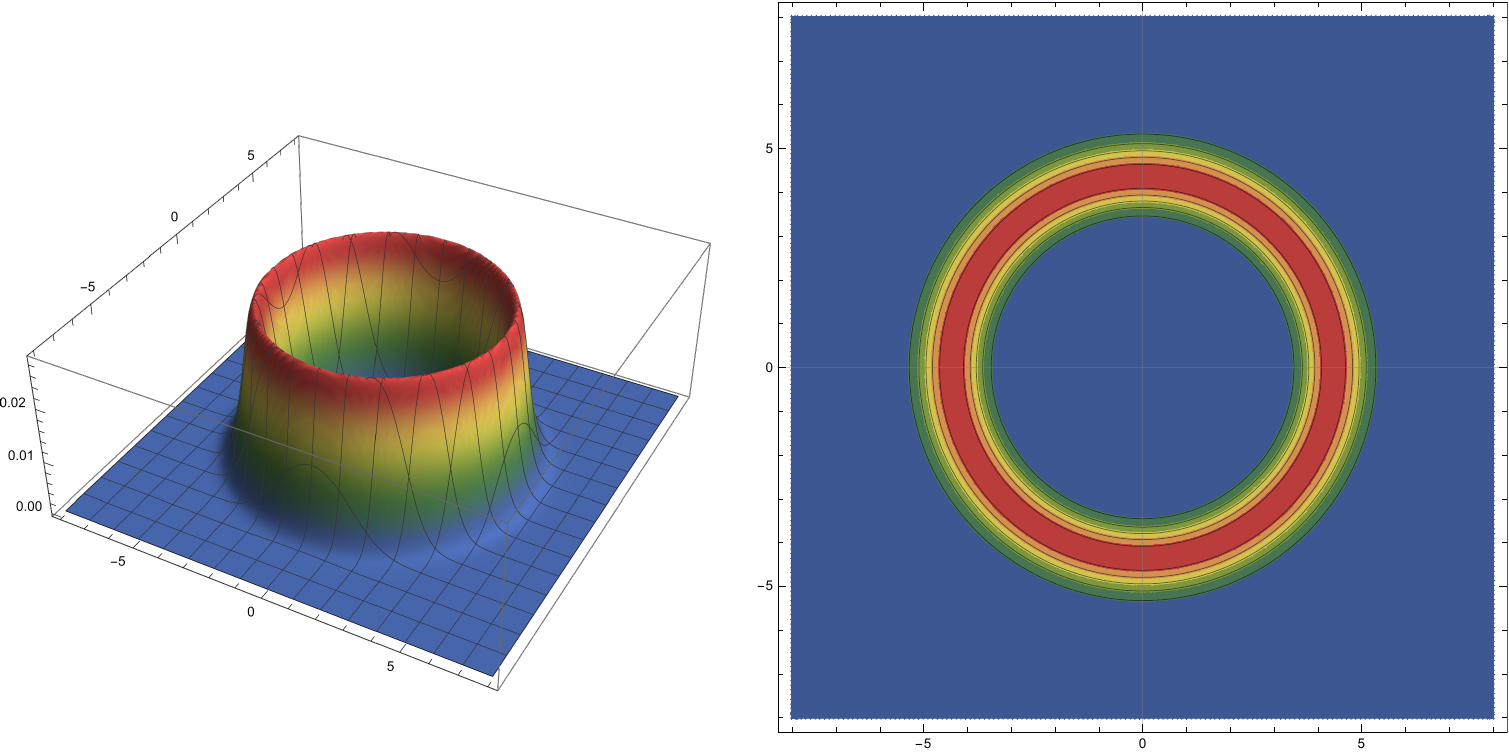}
    \caption{Pdf of the Kotz-type elliptical distribution (\cite{Balakrishnan2009}, Section 13.6.1) with $r=1$, $N=20$, $s=1$.}\label{fig:1}
\end{figure}
    \item[C.] {\it{Gumbel-type elliptical distribution}} (see Section 13.6.3 in \cite{Balakrishnan2009}, with $\rho=0$ to ensure a zero correlation, \textcolor{black}{although the coordinates remain dependent}). The pdf of the Gumbel-type elliptical distribution, under a zero correlation is 
    \begin{equation}
    f(x,x^*)=\frac{a b \exp \left(-a \left(x^2+x^{*2}\right)\right) \exp \left(-b \exp \left(-a \left(x^2+x^{*2}\right)\right)\right)}{\pi  (1-\exp (-b))},    
    \end{equation}
    where $a>0$ and $b>0$ are the scale and shape parameters, respectively, while the pdf of the amplitude takes the form 
\begin{equation}
 f_{\text{AMP}}(\xi)=\frac{2 \pi  a b \xi \exp \left(-b \exp \left(-a \xi^2\right)-a \xi^2\right)}{\pi  (1-\exp (-b))}.  
\end{equation}
Since $a$ accounts for the scale of the distribution, $\text{ICV}_{\text{AMP}}$ depends here only on a single parameter of $b$ (and therefore, restricting the flexibility), and can be shown to be an increasing function thereof (we drop technical details). Moreover, it can also be shown to reduce to $\sqrt{\frac{\pi}{4-\pi}}$ as $b \to 0^{+}$, which results from the fact that the Gumbel-type elliptical distribution generalizes the Gaussian distribution (although in a different manner than the two distributions discussed earlier). It follows that $\text{ICV}_{\text{AMP}}(b)>\sqrt{\frac{\pi}{4-\pi}}$, which means that the ratio of the amplitude's mean to standard deviation is higher than in the normal case. 
\item[D.] {\it{\textcolor{black}{Mixture} of Gaussian distributions.}} 
Conceivably, another possible solution to a highly restrictive Gaussian distribution can be a finite mixture of normal distributions. To that end, let us assume that the joint distribution of $[\alpha_t\,\,\, \beta_t]$ is a mixture of $2^{m+1}$ bivariate Gaussian distributions for $m \in \mathbb{N}$, each featuring the same covariance matrix $\sigma^2 \textbf{\emph{I}}$. On the other hand, the mean vectors $\boldsymbol{\mu}_j=(\mu_{1,j},\mu_{2,j})$, $j=1,2,\ldots,m+1$ are component-specific, but evenly distributed on a circle centered at $(0,0)$ and radius $\mu>0$ (see examples in Figure \ref{fig:2A}). Elementary calculations show that Assumption \ref{ass1} i) holds, which ensures the stationarity of $\{y_t\}$ given by (\ref{proc_yt}). 
Under the Gaussianity, by (\ref{pdfAMP}) and some elementary calculations, we get
    \begin{equation}
    \begin{split}
     f_{\text{AMP}}(\xi) & =  \frac{\xi}{2^{m+1}}\sum\limits_{j=1}^{2^{m+1}} \int _0^{2 \pi }\frac{\exp \left(-\frac{(\xi  \sin (\theta )+\mu_{2,j} )^2}{2 \sigma ^2}-\frac{(\xi  \cos (\theta )+\mu_{1,j} )^2}{2 \sigma ^2}\right)}{2 \pi  \sigma ^2} d\theta\\
     & = \frac{\xi  e^{-\frac{2 \mu ^2+\xi ^2}{2 \sigma ^2}} I_0\left(\frac{\sqrt{2} \mu  \xi }{\sigma ^2}\right)}{\sigma ^2},
     \end{split}    
    \end{equation}
    where $I_n(\cdot)$ denotes the modified Bessel function of the first kind. Note that the distribution of the amplitude here does not depend on the number $m$ of the mixture components. Some tedious algebra leads to the following formula for $\text{ICV}_{\text{AMP}}$ (represented here in terms of the ratio $k=\frac{\mu}{\sigma}$):
    \begin{equation}
     \text{ICV}_{\text{AMP}}(k)= \sqrt{\frac{1}{\frac{4 e^{k^2} \left(k^2+1\right)}{\pi  \left(k^2 I_1\left(\frac{k^2}{2}\right)+\left(k^2+1\right) I_0\left(\frac{k^2}{2}\right)\right)^2}-1}} , 
    \end{equation}
    which can be shown to be increasing with $k \in \mathbb{R}_{+}$ (we skip technicalities). Moreover, $\lim\limits_{k \to 0^{+}} \text{ICV}_{\text{AMP}}(k)=\sqrt{\frac{\pi}{4-\pi}}$, which means that $\text{ICV}_{\text{AMP}}(k)> \sqrt{\frac{\pi}{4-\pi}}$. To conclude, although a mixture of bivariate normals appears to lend some flexibility to the amplitude, the inverse of its coefficient of variation is actually a function of a ratio of two parameters. Moreover, and similarly to the Gumbel-type elliptical distribution, the value of $\text{ICV}_{\text{AMP}}$ is restricted to be higher than $\sqrt{\frac{\pi}{4-\pi}} \approx 1.91$.
    \begin{figure}[ht]
    \centering
    \includegraphics[width=10 cm]{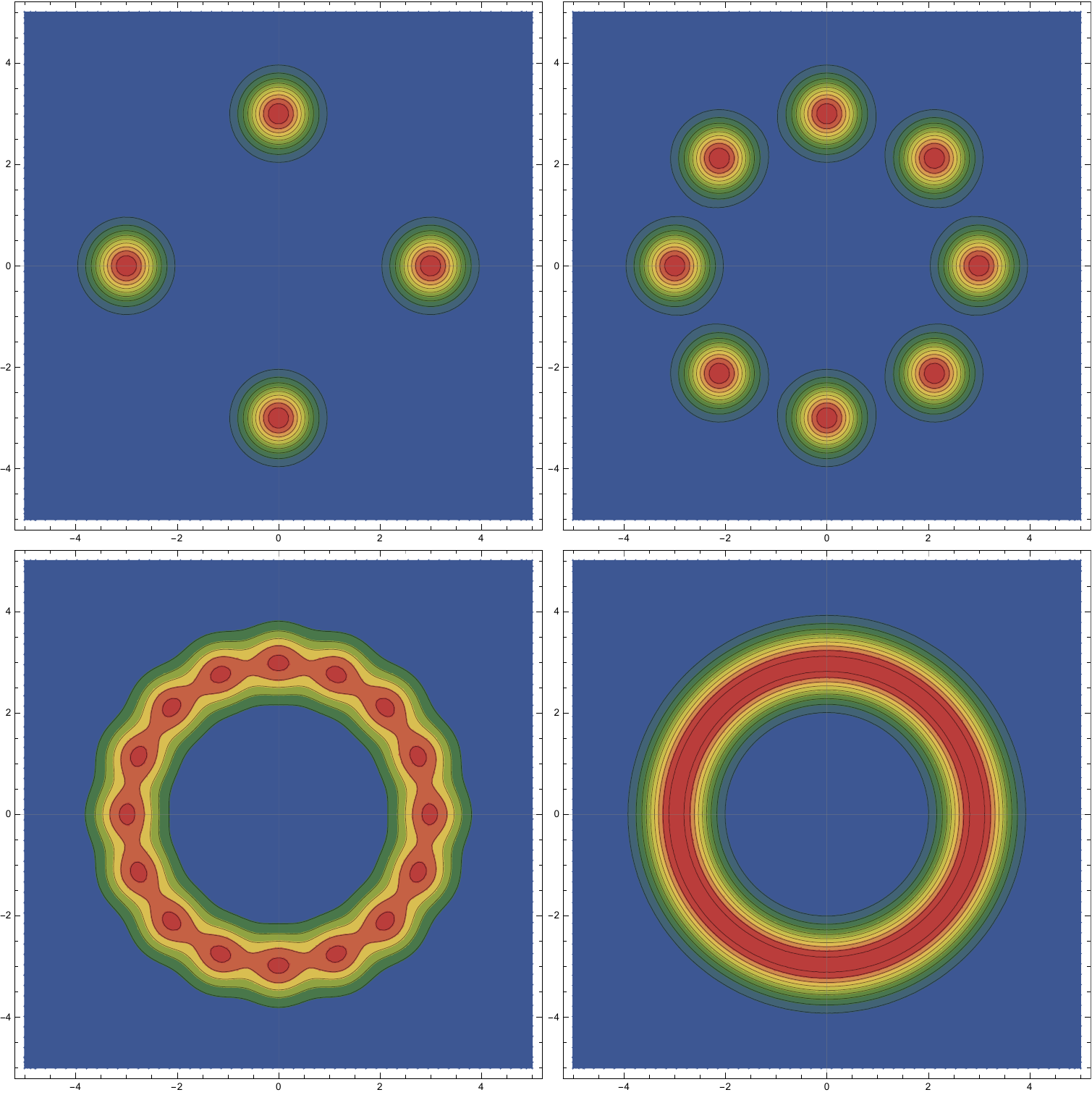}
        \caption{Mixture of bivariate normal distributions with $m=1,2$ (top panel), and $m=3,4$ (bottom panel), radius $\mu=3$, and $\sigma=1/\sqrt{5}$.}\label{fig:2A}
\end{figure}
\item[E.] {\it A proposition of distribution by polar coordinates.} Finally, instead of considering various distributions for $[\alpha_t\,\,\, \beta_t]$ (as done above), we propose to move from the process representation: $y_t=\alpha_t \cos \lambda t + \beta_t \sin \lambda t$, to the equivalent representation: $y_t=A_t \sin (\lambda t + \theta_t)=A_t \sin \theta_t \cos \lambda t + A_t \cos \theta_t \sin \lambda t$, and specify such a distribution for $[A_t\,\,\, \theta_t]$, (where $A_t=\sqrt{\alpha_t^2 + \beta_t^2}$ is the amplitude, and $\theta_t$ is the phase shift) that ensures the stationarity of $y_t$. To that end, we assume that $\theta_t$ follows a $Beta(n,m)$ distribution (for any $t \in \mathbb{Z}$) on the interval $(0,2\pi)$, while $A_t$ is any stationary process. Additionally, we assume that $A_t$ and $\theta_t$ are mutually independent for any $t \in \mathbb{Z}$. To ensure the stationarity of $y_t$, the following set of conditions is necessary (see Theorem \ref{tw21} ii)):
\begin{itemize}
    \item[i)] $E(A_t \sin \theta_t)=E(A_t \cos \theta_t)=0$,
    \item[ii)] $E(A_t^2 \sin^2 \theta_t)=E(A_t^2 \cos^2 \theta_t)$,
    \item[iii)] $E(A_t^2 \sin \theta_t \cos \theta_t)=0$.
\end{itemize}
Elementary analytical calculations shows that i)-iii) hold if and only if $m=n=1$, which gives a uniform distribution for $\theta_t$ on the interval $(0,2\pi)$. Then, the random variables $\cos \theta_t$ and $\sin \theta_t$ follow a $Beta(\frac{1}{2},\frac{1}{2})$ distribution on the interval $(-1,1)$. Below we consider a few examples for $A_t$ (points E.1-E.3).
\begin{itemize}
    \item[E.1.] Assume that $A_t$ is Gaussian process with mean $\mu$ and standard deviation $\sigma$, then (after elementary calculations) the distribution of a bivariate random variable $[\alpha_t \,\, \beta_t]=[A_t \sin \theta_t \,\,\, A_t \cos \theta_t]$ can be shown to have the following pdf:
\begin{equation}
f(x,x^*)=\frac{e^{-\frac{\left(\sqrt{x^2+x^{*2}}-\mu \right)^2}{2 \sigma ^2}}}{2 \pi^{3/2}  \sigma  \sqrt{2x^2+2x^{*2}}}.
\label{PDF:own}
\end{equation}
For an illustration, Figure \ref{fig:2B} presents the pdf for $\mu=4$ and $\sigma=\frac{1}{2}$. 
    \begin{figure}[h!]
    \centering
    \includegraphics[width=10 cm]{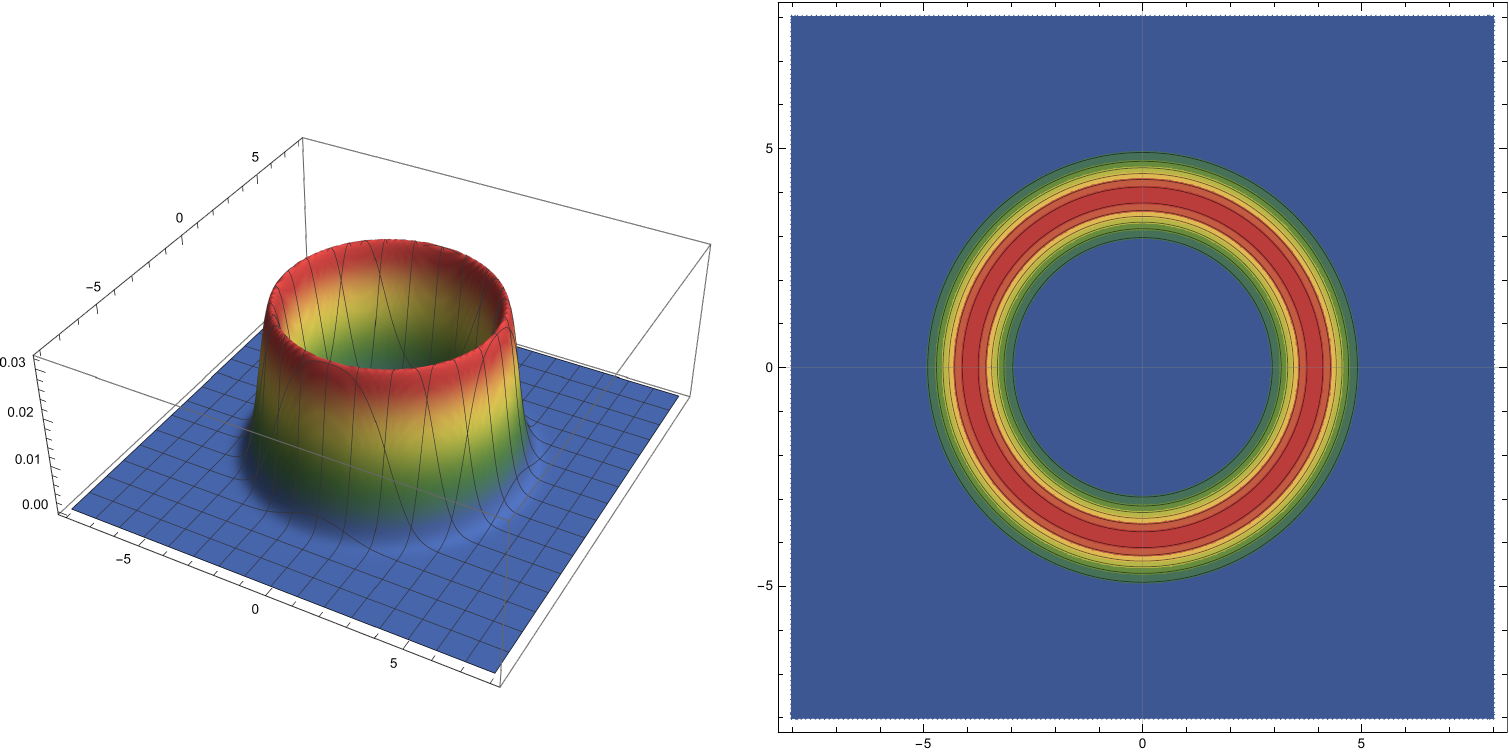}
        \caption{PDF given by (\ref{PDF:own})   with $\mu=4$ and $\sigma=\frac{1}{2}$.}
        \label{fig:2B}
\end{figure}
The amplitude process is $\text{AMP}_t=|A_t|$, which gives the following expression for the inverse coefficient of variation (as a function of $k=\frac{\mu}{\sigma}$): 
$$\text{ICV}_{\text{AMP}}(k)=\sqrt{\frac{\left(k \text{erf}\left(\frac{k}{\sqrt{2}}\right)+\sqrt{\frac{2}{\pi }} e^{-\frac{k^2}{2}}\right)^2}{-\left(k \text{erf}\left(\frac{k}{\sqrt{2}}\right)+\sqrt{\frac{2}{\pi }} e^{-\frac{k^2}{2}}\right)^2+k^2+1}},$$ 
where $\text{erf}(z)=\frac{2}{\sqrt{\pi}}\int\limits_{0}^{z}e^{-t^2}dt$ is the error function. As a function of $k$, the above expression is an continuous function on the real line, increasing over $(0,\infty)$ and decreasing over $(-\infty,0)$. Moreover, $\lim\limits_{k \to 0^{}}\text{ICV}_{\text{AMP}}(k)= \sqrt{\frac{2}{\pi -2}} \approx 1.32$, which is lower that in cases C and D.
\item[E.2.] Incidentally, notice that to ensure that the amplitude is positive, one can specify it as $A_t=e^{B_t}$, where $B_t$ is a Gaussian stationary process with an unconditional mean $\mu$ and standard deviation $\sigma$. Therefore, $A_t$ follows a log-normal distribution, for which the inverse coefficient of variation is simply $\text{ICV}_{\text{AMP}}=1/\sqrt{e^{\sigma^2}-1}$, admitting any value in $R_{+}$.
\item[E.3.] To generalize, $A_t$ can in fact be specified as any stochastic process with such a time-invariant unconditional distribution defined on $R_{+}$ that its inverse coefficient of variation admits any value in $R_{+}$ (it is straightforward to check that it is the case for, e.g., a gamma distribution, inverse gamma distribution, Nakagami distribution, and others).
\end{itemize}  
\end{itemize}
\vspace{1 cm}
\textcolor{black}{
In conclusion, assuming a bivariate uncorrelated Gaussian distribution for $[\alpha_t\,\,\, \beta_t]$ is patently a deficient approach. This result also holds for any dependent bivariate distributions featuring only a singe scale parameter. 
As shown in the examples A-D, the problem arises from the distribution of the $[\alpha_t \,\, \beta_t]$ variable and the resulting properties of the amplitude, which do not prove sufficient in terms of their flexibility. A sole exception to this a somewhat gloom perspective is the Kotz-type elliptical distribution (Point B). Although the distribution itself is of quite a limited popularity, and the formula of $\text{ICV}_{\text{AMP}}$ exhibits some complexity (see (\ref{ICV_Kotz})), the latter can take any value from $R_{+}$, and thus does not suffer from the restraints afflicting the other distributions.
The last example (Point E) takes a different and our novel approach, where the distributional assumptions are made in terms of the polar coordinates instead of $[\alpha_t \,\, \beta_t]$. Owing to that, the amplitude's inverse coefficient of variation can  lower than in Cases C and D or take any positive real value. At the end of this part, we emphasize that in this work, we construct a cyclic process for which amplitude process and phase shift distributions are defined directly and hence such characteristics as analyzed here inverse-coefficient of variation for amplitude process can take any positive value.}

\subsection{Some results related to multi-frequency case}\label{sub:22}
In this subsection we shift our focus to the multi-frequency framework, with a set of frequencies $0<\lambda_1 \leq \lambda_2 \leq \ldots \leq \lambda_{K} \leq \pi$. The theorem below provides explicit formulae for the autocovariance function and power spectral density of a such cyclic process. 

\begin{theorem}
Assume that for any $j=1,2,\ldots,K$, the bivariate random processes $[\alpha_{t,j} \,\, \beta_{t,j}]$ are mutually uncorrelated and have a covariance matrix of the form  $E([\alpha_{t+\tau,j} \,\, \beta_{t+\tau,j}]'[\alpha_{t,j} \,\, \beta_{t,j}])=\gamma_j(\tau) \textbf{\emph{I}}_{2}$, and a diagonal power spectral density matrix of the form $\boldsymbol{f}_j(\omega)=f_{j}(\omega)\textbf{\emph{I}}_{2}$. Then, for any $0<\lambda_1 \leq \lambda_2 \leq \ldots \leq \lambda_{K} \leq \pi$, the zero-mean stationary process $\{y_t\}$ defined as 
\begin{equation}
y_t=\sum\limits_{j=1}^{K}  (\alpha_{t,j} \cos \lambda_j t + \beta_{t,j} \sin \lambda_j t),   
\end{equation}
has the autocovariance function 
\begin{equation}
\gamma_y(\tau)=\sum\limits_{j=1}^{K}\gamma_j(\tau) \cos \lambda_j \tau,    
\end{equation} 
and the power spectral density function
\begin{equation}
f_{y}(\omega)=\frac{1}{2}\sum\limits_{j=1}^{K}( f_j(\omega-\lambda_j) + f_j(\omega+\lambda_j)).
\label{eq:psd}
\end{equation} 
\label{tw:psd}
\end{theorem}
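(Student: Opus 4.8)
The plan is to reduce everything to the single-frequency result (Theorem \ref{tw21}) applied componentwise, and then to sum the pieces, using mutual uncorrelatedness to kill every cross term. Fix $j$ and set $z_{t,j}=\alpha_{t,j}\cos\lambda_j t+\beta_{t,j}\sin\lambda_j t$, so that $y_t=\sum_{j=1}^{K}z_{t,j}$. The hypothesis $E\big([\alpha_{t+\tau,j}\,\,\beta_{t+\tau,j}]'[\alpha_{t,j}\,\,\beta_{t,j}]\big)=\gamma_j(\tau)\textbf{I}_2$ says precisely that, in the notation of Theorem \ref{tw21} applied to the $j$-th component, $\omega_{11}^{(j)}(\tau)=\omega_{22}^{(j)}(\tau)=\gamma_j(\tau)$ and $\omega_{12}^{(j)}(\tau)=\omega_{21}^{(j)}(\tau)=0$; in particular $\omega_{12}^{(j)}=-\omega_{21}^{(j)}$, so condition (\ref{eq:eq}) holds (the zero-mean part being implicit in the statement). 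Hence, by Theorem \ref{tw21} ii) and the identity displayed just after it, each $z_{t,j}$ is zero-mean stationary with $E(z_{t,j}z_{t+\tau,j})=\gamma_j(\tau)\cos\lambda_j\tau$.

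Next I would dispatch the cross terms. For $i\neq j$, expand $E(z_{t,i}z_{t+\tau,j})$ into four summands, each a trigonometric factor in $t,t+\tau$ times one of $E(\alpha_{t,i}\alpha_{t+\tau,j})$, $E(\alpha_{t,i}\beta_{t+\tau,j})$, $E(\beta_{t,i}\alpha_{t+\tau,j})$, $E(\beta_{t,i}\beta_{t+\tau,j})$. Since $[\alpha_{\cdot,i}\,\,\beta_{\cdot,i}]$ and $[\alpha_{\cdot,j}\,\,\beta_{\cdot,j}]$ are mutually uncorrelated and zero-mean, all four expectations vanish, so $E(z_{t,i}z_{t+\tau,j})=0$. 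Summing over all pairs, $\gamma_y(\tau):=E(y_t y_{t+\tau})=\sum_{j=1}^{K}\gamma_j(\tau)\cos\lambda_j\tau$, which is finite and free of $t$; together with $E(y_t)=0$ this yields both the (weak) stationarity of $\{y_t\}$ and the stated autocovariance formula.

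For the power spectral density I would pass to the discrete-time Fourier transform. Because each $\gamma_j$ is absolutely summable (it possesses a spectral density $f_j$) and $|\cos\lambda_j\tau|\le 1$, the series $\sum_{\tau\in\mathbb{Z}}\gamma_y(\tau)e^{-i\omega\tau}$ converges absolutely, so $f_y$ is well defined as the (suitably normalized) transform of $\gamma_y$. Writing $\cos\lambda_j\tau=\tfrac12\big(e^{i\lambda_j\tau}+e^{-i\lambda_j\tau}\big)$ and using the elementary frequency-shift (modulation) identity — the transform of $\tau\mapsto\gamma_j(\tau)e^{\mp i\lambda_j\tau}$ is $f_j(\,\cdot\pm\lambda_j\,)$ — term by term in $j$ gives $f_y(\omega)=\tfrac12\sum_{j=1}^{K}\big(f_j(\omega-\lambda_j)+f_j(\omega+\lambda_j)\big)$, which is (\ref{eq:psd}).

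There is no real obstacle here; the only points calling for a little care are (i) checking that the prescribed covariance structure actually triggers condition (\ref{eq:eq}), so Theorem \ref{tw21} is applicable; (ii) the vanishing of the cross-covariances, which rests entirely on the mutual-uncorrelatedness-cum-zero-mean hypothesis; and (iii) justifying the interchange of summations in forming $f_y$, which follows from the absolute summability of $\gamma_y$. The endpoint case $\lambda_j=\pi$ is harmless, since then $f_j(\omega-\pi)=f_j(\omega+\pi)$ by $2\pi$-periodicity and the formula still evaluates correctly.
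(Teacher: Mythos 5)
Your proposal is correct and follows essentially the same route as the paper: cross terms vanish by mutual uncorrelatedness, the diagonal within-component covariance $\gamma_j(\tau)\textbf{I}_2$ yields $E(z_{t,j}z_{t+\tau,j})=\gamma_j(\tau)\cos\lambda_j\tau$ (your appeal to Theorem \ref{tw21} just packages the paper's direct expansion), and the spectral density follows by transforming $\gamma_y$ with the product-to-sum identity, which is equivalent to your complex-exponential frequency-shift argument. Your added remarks on absolute summability and the $\lambda_j=\pi$ endpoint are harmless refinements of the same computation.
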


The above theorem generalizes results presented in \cite{Proietti2023}, where the autocovariance function and power spectral density were derived in a very special case that we discuss in more detail in Section 3 (see also some introductory remarks in Section 1). 

\begin{remark}
The power spectral mass concentration under certain frequencies $\lambda_j$ in (\ref{eq:psd}) can be boosted (see \cite{HarveyTrimbur2003} and \cite{Trimbur2006} in the context of the {\it{nth-order stochastic cycle}}). To adopt the idea into our framework, it is enough to take  
$\alpha_{t,j}=\tilde \alpha_{t,j}^{(n)}$ and $\beta_{t,j}=\tilde \beta_{t,j}^{(n)}$, where $\{\tilde \alpha_{t,j}^{(n)}\}$ and $\{\tilde \beta_{t,j}^{(n)}\}$ are defined recursively as 
\begin{equation}
\tilde \alpha_{t,j}^{(s)}=\sum\limits_{i=0}^{\infty} \psi_i \alpha_{t-i,j}^{(s-1)} \,\,\,\, \text{and} \,\,\,\,\,\, \tilde \beta_{t,j}^{(s)}=\sum\limits_{i=0}^{\infty} \psi_i \beta_{t-i,j}^{(s-1)},   
\end{equation} 
for $s=2,3,\ldots,n$, with 
$\tilde \alpha_{t,j}^{(1)}=\sum\limits_{i=0}^{\infty} \psi_i \epsilon_{t-i,j}$ and $\tilde \beta_{t,j}^{(1)}=\sum\limits_{i=0}^{\infty} \psi_i \epsilon_{t-i,j}^{*}$, 
where $\{\epsilon_{t,j}\}$ and $\{\epsilon_{t,j}^*\}$ are uncorrelated white noises sharing the same variance $\sigma_{j}$. Finally, by $f_{1,j}(\omega)$ we denote the power spectral density function of $\{\tilde \alpha_{t,j}^{(1)}\}$ and $\{\tilde \beta_{t,j}^{(1)}\}$. Then, based on the theorem above, and using some elementary calculations, we get that the power spectral density of $\{y_t\}$ for $\alpha_{t,j}=\tilde \alpha_{t,j}^{(n)}$ and $\beta_{t,j}=\tilde \beta_{t,j}^{(n)}$ has the form 
\begin{equation}
f_{y}(\omega)=\frac{1}{2}\sum\limits_{j=1}^{K}( f_{1,j}^{n}(\omega-\lambda_j) + f_{1,j}^{n}(\omega+\lambda_j)).
\label{psd:smooth}
\end{equation}
\end{remark}

We continue with a theorem concerning the representation of any zero-mean stationary process in the form
\begin{equation}
y_{t} = \sum\limits_{j=1}^{K} \alpha_{j,t} \cos \lambda_j t +   \beta_{j,t} \sin \lambda_j t,  
\label{eq:yt0}
\end{equation}
with $[\alpha_{j,t}\,\,\, \beta_{j,t}]$ being bivariate processes mutually independent for $j=1,2,\ldots,K$. This indicates the ambiguity of defining the amplitude of fluctuations also for many frequencies.

\begin{theorem}
Let $\{y_{t}:\, t \in \mathbb{Z}\}$ be any zero-mean stationary time series with an $\text{MA}(\infty)$ representation \begin{equation}
y_{t}=\Psi(L)\epsilon_{t},     
\end{equation} where $\Psi(L)=\sum\limits_{j=0}^\infty \psi_j L^j$, $\sum\limits_{j=0}^{\infty}|\psi_j| <\infty$ and where $\{\epsilon_{t}\}$ is a sequence of zero-mean IID  random variables which distribution being a marginal distribution for some bivariate spherical distribution with characteristic function $\phi(\|\textbf{z}\|)$ at $\textbf{z} \in \mathbb{R}^2 $. Then, for any frequencies $0<\lambda_1 \leq \lambda_2 \leq \ldots \leq  \lambda_{K} \leq \pi$, the process $\{y_t\}$ can be represented in infinitely many ways as 
\begin{equation}
y_{t} = \sum\limits_{j=1}^{K} \alpha_{j,t} \cos \lambda_j t +   \beta_{j,t} \sin \lambda_j t,  
\label{eq:yt}
\end{equation}
where $[\alpha_{j,t}\, \beta_{j,t}]$  are zero-mean bivariate  processes, mutually independent for $j=1,2,\ldots,K$, and having  spherical distributions independent of $t$. 
%
\label{tw:representation}
\end{theorem}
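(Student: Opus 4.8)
The plan is to reduce the $K$-frequency case to a direct application of Theorem \ref{tw_representation_spherical} (or its single-frequency mechanism), by splitting the single white-noise innovation $\epsilon_t$ into $K$ independent copies and attaching a frequency-dependent rotation to each. First I would observe that the marginal of a bivariate spherical distribution with characteristic function $\phi(\|\mathbf{z}\|)$ is itself infinitely divisible-free of any frequency structure, and more importantly that one can build, for each $j=1,\dots,K$, an i.i.d.\ bivariate sequence $[\epsilon_{j,t}\,\,\epsilon_{j,t}^*]$ whose joint distribution is spherical with the same radial profile $\phi$, and such that the first coordinates $\{\epsilon_{j,t}\}_{j=1}^K$, together, reconstruct $\epsilon_t$ in distribution. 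Concretely, since a $2K$-dimensional spherical law projects onto each $2$-dimensional coordinate plane as a bivariate spherical law with characteristic function $\phi(\|\cdot\|)$, take $\boldsymbol{\epsilon}_t=[\epsilon_{1,t}\,\,\epsilon_{1,t}^*\,\,\epsilon_{2,t}\,\,\epsilon_{2,t}^*\,\,\ldots\,\,\epsilon_{K,t}\,\,\epsilon_{K,t}^*]$ to be i.i.d.\ in $t$ with a $2K$-variate spherical distribution; this is the source of the ``infinitely many ways'' (any such $2K$-variate spherical lift with the prescribed $2$-dimensional margins works, and they are not unique for $K\ge 2$). Then set $\epsilon_t := \sum_{j=1}^K c_j \epsilon_{j,t}$ for suitable constants, or more cleanly, exploit that the $\epsilon_{j,t}$ are mutually independent so that $\sum_j \psi_i \epsilon_{j,t-i}$-type filters behave additively; one must be slightly careful that the target $\{\epsilon_t\}$ has the prescribed marginal, which pins down the relation between $\phi$ and the radial law of the lift.

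Second, I would define, exactly as in Theorem \ref{tw_representation_spherical}(ii), the rotated innovations $[\zeta_{j,t}\,\,\zeta_{j,t}^*]' = \mathbf{R}(\lambda_j t)[\epsilon_{j,t}\,\,\epsilon_{j,t}^*]'$, which, by sphericity of $[\epsilon_{j,t}\,\,\epsilon_{j,t}^*]$ and the fact that a rotation preserves a spherical law, is again an i.i.d.\ zero-mean white noise with the same bivariate spherical distribution, independent across $j$. Then put
\begin{equation}
[\alpha_{j,t}\,\,\beta_{j,t}]' := \mathbf{R}(-\lambda_j t)\,\Psi(L)\,[\epsilon_{j,t}\,\,\epsilon_{j,t}^*]',
\end{equation}
so that each $[\alpha_{j,t}\,\,\beta_{j,t}]$ is a filtered spherical sequence, hence spherically distributed with a $t$-independent law by Theorem \ref{tw_representation_spherical}(i), and the $j$-blocks inherit mutual independence from the independence of the $\boldsymbol{\epsilon}$-blocks (a measurable function of one block is independent of a measurable function of another). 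Summing the contributions, $\alpha_{j,t}\cos\lambda_j t + \beta_{j,t}\sin\lambda_j t$ telescopes via $[\cos\lambda_j t\,\,\sin\lambda_j t]\mathbf{R}(-\lambda_j t)=[\cos\lambda_j(t\!-\!k)\,\,\sin\lambda_j(t\!-\!k)]$-type identities (the computation in Theorem \ref{tw_representation_spherical}(ii)) to $\sum_{k}\psi_k[\cos\lambda_j k\,\zeta_{j,t-k}+\sin\lambda_j k\,\zeta_{j,t-k}^*]$, and adding over $j$ I would match this with $\Psi(L)\epsilon_t = y_t$, using the chosen coupling between $\epsilon_t$ and the $\epsilon_{j,t}$.

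The main obstacle, and the step I expect to require the most care, is the reconstruction identity $y_t \overset{d}{=} \sum_{j=1}^K (\alpha_{j,t}\cos\lambda_j t+\beta_{j,t}\sin\lambda_j t)$ as \emph{processes} (all finite-dimensional distributions), not merely at a single $t$: one must verify that after applying the rotations the different frequency components do not interfere, i.e.\ the cross terms $E(\zeta_{j,t}\zeta_{j',s})$ for $j\neq j'$ vanish and the joint law is the right one. Sphericity is exactly what buys this: the rotation $\mathbf{R}(\lambda_j t)$ acts within the $j$-th coordinate plane only, leaving the block-diagonal spherical structure intact, so independence across $j$ and whiteness in $t$ survive. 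The second delicate point is articulating precisely in what sense ``infinitely many'' representations exist — here I would simply exhibit a one-parameter (in fact infinite-dimensional, for $K\ge2$) family of admissible $2K$-variate spherical lifts of the prescribed $2$-dimensional margins, e.g.\ by mixing over the radial generator, and note each yields a genuinely different joint law for $([\alpha_{1,t}\,\,\beta_{1,t}],\dots,[\alpha_{K,t}\,\,\beta_{K,t}])$ while leaving the marginal law of $y_t$ unchanged. The remaining steps (summability of $\Psi$, existence of moments, the telescoping trigonometric identity) are routine and imported verbatim from Theorem \ref{tw_representation_spherical}.
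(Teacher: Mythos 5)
Your overall skeleton (split the innovation into $K$ frequency-attached pieces, filter each with $\Psi(L)$, rotate by $\textbf{R}(-\lambda_j t)$, and use $[\cos\lambda_j t\,\,\sin\lambda_j t]\textbf{R}(-\lambda_j t)=[1\,\,0]$ to telescope back to $y_t$) is exactly the paper's, but the splitting step — the only nontrivial part — has a genuine gap. Your construction takes $\boldsymbol{\epsilon}_t$ to be a $2K$-variate \emph{spherical} lift whose two-dimensional margins have characteristic function $\phi(\|\cdot\|)$, and then asserts that the blocks $[\epsilon_{j,t}\,\,\epsilon_{j,t}^*]$ are mutually independent, so that independence of the $[\alpha_{j,t}\,\,\beta_{j,t}]$ follows. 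This is false outside the Gaussian case: by Maxwell's theorem (which the paper itself invokes right after Theorem \ref{tw_representation_spherical}), the coordinates, and hence the coordinate blocks, of a non-Gaussian spherical vector are never independent — they are merely uncorrelated. Since mutual independence of the $[\alpha_{j,t}\,\,\beta_{j,t}]$ over $j$ is part of the theorem's conclusion, the $2K$-spherical-lift route cannot deliver it. (A secondary defect of the same route: a bivariate characteristic generator $\phi$ need not be a valid generator in dimension $2K$, so the lift you posit may not even exist.) The alternative reading of your construction — take the $K$ blocks genuinely independent, each bivariate spherical with the \emph{same} generator $\phi$, and set $\epsilon_t=\sum_j c_j\epsilon_{j,t}$ — fails for the opposite reason: the characteristic function of that sum is $\prod_j\phi(|c_j|\,s)$, which does not equal $\phi(s)$ for a general spherical law, so the reconstructed innovation no longer has the prescribed marginal. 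You flag this as something that "pins down the relation between $\phi$ and the radial law of the lift", but it is precisely the crux, not a side condition, and it is not resolvable by rescaling alone.

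The paper resolves it differently: it decomposes $[\epsilon_t\,\,\epsilon_t^*]=\sum_{j=1}^K[\epsilon_{j,t}\,\,\epsilon_{j,t}^*]$ into \emph{independent} spherical summands whose characteristic functions are fractional powers $\phi^{k_j}(\|\textbf{z}\|)$ of the target generator (a divisibility-type decomposition), so that the product of the factors reproduces $\phi$ exactly and the identity $y_t=\sum_j\Psi(L)\epsilon_{j,t}$ holds pathwise; the "infinitely many" representations then come from varying the exponents $k_j>0$, not from varying a spherical lift. Your opening remark that the marginal of a bivariate spherical law "is infinitely divisible" is also unfounded in general (bounded non-degenerate spherical laws are not infinitely divisible), so if you intend to repair the argument along the paper's lines you must either assume, or restrict to, spherical innovations for which $\phi^{k}$ is again a (spherical) characteristic function for the exponents used — a hypothesis the paper uses implicitly. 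The rest of your argument (sphericity preserved under rotation and under the absolutely summable filter, block-wise independence passing to measurable functions of the blocks once the blocks really are independent, and the trigonometric telescoping) is correct and matches Theorem \ref{tw_representation_spherical}.
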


We close this subsection with a theorem establishing a vector autoregression (VAR) representation of the cyclical process $\{y_t\}$ and stationarity up to any order, allowing for dependencies between cyclic components. 

\begin{theorem}
Let $y_t=\sum\limits_{j=1}^{K}z_{j,t}$, where $z_{j,t}=\alpha_{j,t} \cos \lambda_j t + \beta_{j,t} \sin \lambda_j t$, and $0 <  \lambda_1< \lambda_2<\ldots,\lambda_K\leq \pi$. Assume that for any $j=1,2,\ldots,K$, the processes $\{\alpha_{j,t}\}$ and $\{\beta_{j,t}\}$ are stationary autoregressive processes of order $p_j \in \mathbb{N}$ with the same characteristic polynomial $\Phi_j(L)=1-\sum\limits_{i=1}^{p_j}\phi_{j,i}L^i$, that is
$$\left\{\begin{array}{lcl}
\Phi_j(L)\alpha_{j,t}&=&\epsilon_{j,t}\\
\Phi_j(L)\beta_{j,t}&=&\epsilon_{j,t}^*,
\end{array}\right.$$
 where $[\epsilon_{j,t}\,\,\, \epsilon_{j,t}^*]$ is IID bivariate random sequence with some spherical distribution. Then
 \begin{itemize}
 \item[i)] for any $j=1,2,\ldots,K$, the process $\{z_{j,t}\}$ is a coordinate of a bivariate $\text{VAR}(p_j)$ process of the form 
\begin{equation}
 \left[
\begin{array}{c}
 z_{j,t} \\
 z_{j,t}^{*} \\
\end{array}
\right]=  \sum\limits_{i=1}^{p_j} \left(\phi_{j,i} \textbf{\emph{R}}(i \lambda_j)  \left[
\begin{array}{c}
 z_{j,t-i} \\
 z_{j,t-i}^{*} 
\end{array}
\right] \right)  +    \textbf{\emph{R}}(t \lambda_j)  \left[
\begin{array}{c}
 \epsilon_{j,t} \\
 \epsilon_{j,t}^{*} 
\end{array}
\right];
\end{equation}
\item[ii)] \textcolor{black}{assuming that the joint distribution of $\boldsymbol{\epsilon}_{t}=[\epsilon_{1,j}\,\, \epsilon_{1,t}^*\,\,\epsilon_{2,j}\,\, \epsilon_{2,t}^* \,\,\ldots \,\, \epsilon_{K,j}\,\, \epsilon_{K,t}^*]$ has a pdf at point $(x_1,y_1,x_2,y_2,\ldots,x_K,y_K) \in \mathbb{R}^{2K}$ given by $g(x_1^2+y_1^2,x_2^2+y_2^2,\ldots,x_K^2+y_K^2)$, where $g:[0,\infty)^K \to \mathbb{R}$ and that for some positive integer $m$ we have that $E|\epsilon_{j,t}|^m<\infty$,  for all $j=1,2,\ldots,K$, then $\{y_t\}$ is stationary up to order $m$. } 
\end{itemize}  
\label{Hannan_to_Harvey}
\end{theorem}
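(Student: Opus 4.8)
The plan is to prove part i) by a rotation-matrix identity and part ii) by representing $\{y_t\}$ as a linear filter, with deterministic and time-invariant coefficients, of an IID sequence obtained by ``rotating away'' the trigonometric factors. For i), write the AR recursions jointly as $[\alpha_{j,t}\,\,\beta_{j,t}]'=\sum_{i=1}^{p_j}\phi_{j,i}[\alpha_{j,t-i}\,\,\beta_{j,t-i}]'+[\epsilon_{j,t}\,\,\epsilon_{j,t}^*]'$ and set $[z_{j,t}\,\,z_{j,t}^*]'=\textbf{R}(\lambda_j t)[\alpha_{j,t}\,\,\beta_{j,t}]'$, so that $z_{j,t}$ is indeed its first coordinate. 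Left-multiplying the recursion by $\textbf{R}(\lambda_j t)$ and using the homomorphism property $\textbf{R}(a)\textbf{R}(b)=\textbf{R}(a+b)$ to rewrite $\textbf{R}(\lambda_j t)[\alpha_{j,t-i}\,\,\beta_{j,t-i}]'=\textbf{R}(i\lambda_j)\,\textbf{R}(\lambda_j(t-i))[\alpha_{j,t-i}\,\,\beta_{j,t-i}]'=\textbf{R}(i\lambda_j)[z_{j,t-i}\,\,z_{j,t-i}^*]'$ gives exactly the stated VAR$(p_j)$ with innovation $\textbf{R}(\lambda_j t)[\epsilon_{j,t}\,\,\epsilon_{j,t}^*]'$.

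For ii), set $[\zeta_{j,t}\,\,\zeta_{j,t}^*]'=\textbf{R}(\lambda_j t)[\epsilon_{j,t}\,\,\epsilon_{j,t}^*]'$ (the innovation appearing in i)) and $\boldsymbol{\zeta}_t=[\zeta_{1,t}\,\,\zeta_{1,t}^*\,\ldots\,\zeta_{K,t}\,\,\zeta_{K,t}^*]$. Since $\{\alpha_{j,t}\}$ and $\{\beta_{j,t}\}$ are stationary autoregressions sharing the polynomial $\Phi_j$, they admit absolutely summable linear representations $\alpha_{j,t}=\sum_{k\ge0}\psi_{j,k}\epsilon_{j,t-k}$ and $\beta_{j,t}=\sum_{k\ge0}\psi_{j,k}\epsilon_{j,t-k}^*$ with $\sum_k|\psi_{j,k}|<\infty$; substituting these into $z_{j,t}=\alpha_{j,t}\cos\lambda_j t+\beta_{j,t}\sin\lambda_j t$ and applying the angle-addition formulas, exactly as in the proof of Theorem \ref{tw_representation_spherical} ii), gives $z_{j,t}=\sum_{k\ge0}\psi_{j,k}[\cos(\lambda_j k)\,\zeta_{j,t-k}+\sin(\lambda_j k)\,\zeta_{j,t-k}^*]$. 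Hence $y_t=\sum_{j=1}^K\sum_{k\ge0}\psi_{j,k}[\cos(\lambda_j k)\,\zeta_{j,t-k}+\sin(\lambda_j k)\,\zeta_{j,t-k}^*]$, a linear combination of the coordinates of the vectors $\boldsymbol{\zeta}_{t-k}$, $k\ge0$, with deterministic, $t$-free, absolutely summable coefficients.

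The key fact --- and, I expect, the main obstacle --- is that $\{\boldsymbol{\zeta}_t:t\in\mathbb{Z}\}$ is an IID sequence whose common distribution does not depend on $t$. Independence across $t$ is inherited from the IID innovation vectors $\boldsymbol{\epsilon}_t=[\epsilon_{1,t}\,\,\epsilon_{1,t}^*\,\ldots\,\epsilon_{K,t}\,\,\epsilon_{K,t}^*]$, because $\boldsymbol{\zeta}_t$ is a deterministic block-rotation of $\boldsymbol{\epsilon}_t$. For the marginal, $\boldsymbol{\zeta}_t'=\text{diag}(\textbf{R}(\lambda_1 t),\ldots,\textbf{R}(\lambda_K t))\,\boldsymbol{\epsilon}_t'$, and since each block rotation $\textbf{R}(\lambda_j t)$ preserves $x_j^2+y_j^2$ while the pdf of $\boldsymbol{\epsilon}_t$ has the assumed form $g(x_1^2+y_1^2,\ldots,x_K^2+y_K^2)$, the associated change of variables leaves the density invariant, so $\boldsymbol{\zeta}_t$ has the same distribution as $\boldsymbol{\epsilon}_0$ for every $t$; this is the one place where the block-spherical hypothesis on $g$ is essential. (Alternatively, one may iterate the VAR of part i) and invert the matrix lag polynomial $\textbf{I}_2-\sum_{i=1}^{p_j}\phi_{j,i}\textbf{R}(i\lambda_j)L^i$, which is stable because stationarity of $\{\alpha_{j,t}\}$ forces all roots of its determinant outside the unit disc.)

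It remains to conclude. By block sphericity $E|\epsilon_{j,0}^*|^m=E|\epsilon_{j,0}|^m<\infty$, hence $E|\zeta_{j,0}|^m<\infty$; Minkowski's inequality in $L^m$ applied to the above representation of $y_t$ gives $E|y_t|^m<\infty$, and Hölder's inequality then shows that $E(y_{t+\tau_1}\cdots y_{t+\tau_s})$ exists for every $s\le m$. Substituting the linear representation of each factor and interchanging expectation with the resulting multiple series --- legitimate by the $L^m$ bound and absolute summability --- expresses this mixed moment as a $t$-independent linear combination of expectations of products $\prod_{l=1}^s[\cos(\lambda_{j_l}k_l)\,\zeta_{j_l,t+\tau_l-k_l}+\sin(\lambda_{j_l}k_l)\,\zeta_{j_l,t+\tau_l-k_l}^*]$; by strict stationarity of $\{\boldsymbol{\zeta}_t\}$ each such expectation depends only on the offsets $\tau_l-k_l$, not on $t$. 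Therefore $E(y_{t+\tau_1}\cdots y_{t+\tau_s})$ is free of $t$ for all $s\le m$, which is exactly stationarity of $\{y_t\}$ up to order $m$.
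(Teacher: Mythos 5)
Your proof is correct and follows essentially the same route as the paper: part i) is the identical rotation-matrix manipulation, and part ii) rests on the same key observation that the block-diagonal rotation $\text{diag}(\textbf{R}(\lambda_1 t),\ldots,\textbf{R}(\lambda_K t))$ is orthogonal with determinant one and preserves the block-spherical density, so the rotated innovations have a $t$-free IID law. The only difference is that you spell out, via the MA($\infty$) filter in the rotated noise together with the Minkowski/H\"older and interchange-of-summation arguments, the passage from that invariance to stationarity up to order $m$, a step the paper's proof leaves implicit after stating the VAR representation of the stacked vector.
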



\subsection{The concept of strongly pseudo-cyclical autocovariance function}

In the literature related to stationary processes, a zero-mean stationary time series $\{y_t:\,t\in \mathbb{Z}\}$ displaying cyclic features (with either one frequency $\lambda$ or a set of frequencies $\boldsymbol{\lambda}=(\lambda_1,\lambda_2,\ldots,\lambda_K) \in (0,\pi]^K$) is typically referred to as {\it cyclical} (e.g., {\it a cyclical long memory process}, {\it cyclical component}, {\it cyclical pattern}, or a {\it cyclical model}; see, e.g., \cite{Arteche2000}, \cite{Proietti2023}, \cite{Hyndman2008}). Another common term of reference is a process with {\it 'pseudocyclical'} behaviour (see \cite{Trimbur2006}). Nevertheless, the definition of a cyclical property in the zero-mean stationary case has not been explicitly linked to characteristics such as the autocovariance or spectral density function. Below, we formulate a formal definition in relation to the autocovariance function, through the concept of a {\it strongly pseudocyclical autocovariance function}.

\begin{definition}
We say that a zero-mean weakly stationary process $\{y_t:\,t\in \mathbb{Z}\}$ has {\it strongly pseudo-cyclical autocovariance function} with $K$ different frequencies: $\lambda_1, \lambda_2, \ldots, \lambda_K \in (0, \pi]$ if its autocovariance function satisfies
\begin{equation}
E(y_t y_{t+\tau})=\sum\limits_{j=1}^{K}a_{j,\tau} \cos \lambda_j \tau + b_{j,\tau} \sin \lambda_j \tau ,  
\label{pseudo-cyclical}
\end{equation} where the sequences $\{a_{j,\tau}\}_{\tau \in \mathbb{Z}}$ and $\{b_{j,\tau}\}_{\tau \in \mathbb{Z}}$ are such that  for any $j=1,2,\ldots,K$ $$\lim\limits_{|\tau| \to \infty }a_{j,\tau}=\lim\limits_{|\tau| \to \infty }b_{j,\tau}=0$$ and there exists $\tau_0 >0$ such that both $|a_{j,\tau}|$ and $|b_{j,\tau}|$ are a non-increasing function at $\tau>\tau_0$.
\label{def:21}
\end{definition}

To illustrate the above-stated definition, let us consider a simple process of the form:
\begin{equation}
y_t=\alpha_t \cos \omega_1 t + \beta_t \sin \omega_1 t,    
\end{equation}
where $\alpha_t= \xi_t \cos \omega_2 t + \xi_t' \sin \omega_2 t$ and $\beta_t= \eta_t \cos \omega_2 t + \eta_t' \sin \omega_2 t$, $\omega_1,\omega_2 \in (0,\pi]$ and
 $\xi_t=\rho \xi_{t-1} + \kappa_t$, $\xi_t'=\rho \xi_{t-1}' + \kappa_t'$,  $\eta_t=\rho \eta_{t-1} + \epsilon_t$, $\eta_t'=\rho \eta_{t-1}' + \epsilon_t'$, $|\rho|<1$. For  
$[\kappa_t\,\, \kappa_t'\,\,\, \epsilon_t \,\,\, \epsilon_t']$ we assume that follows a Gaussian white noise with a covariance matrix $\sigma^2 \textbf{I}_4$. Hence, by Theorem \ref{tw:psd}, the autocovariance functions of the processes $\alpha_t$ and $\beta_t$ are the same:  $\gamma(\tau)=\frac{\rho^{|\tau|}\sigma^2}{1-\rho^2}\cos \omega_2 \tau $. The same theorem yields also the autocovariance function of $y_t$:
$\gamma_y(\tau)=\frac{\rho^{|\tau|}\sigma^2}{1-\rho^2}\cos \omega_2 \tau  \cos \omega_1 \tau =\frac{1}{2}\frac{\rho^{|\tau|}\sigma^2}{1-\rho^2} \cos (\omega_1+\omega_2)\tau + \frac{1}{2}\frac{\rho^{|\tau|}\sigma^2}{1-\rho^2} \cos |\omega_1-\omega_2|\tau$, the formula of which reveals the relation of this autocovariance function to two (unless coinciding) frequencies $\lambda_1,\lambda_2 \in (0,\pi]$ related to $\omega_1 + \omega_2$ and $\omega_1 - \omega_2$. Specifically, $\lambda_2=|\omega_1-\omega_2|$, with $a_{2,\tau}=\frac{1}{2}\frac{\rho^{|\tau|}\sigma^2}{1-\rho^2}$ and $b_{2,\tau}=0$. If $\omega_1+\omega_2 \leq \pi$, then $\lambda_1=\omega_1+\omega_2 \in (0,\pi]$, with $a_{1,\tau}=\frac{1}{2}\frac{\rho^{|\tau|}\sigma^2}{1-\rho^2}$ and $b_{1,\tau}=0$. Otherwise, that is if $\omega_1+\omega_2 > \pi$, then $\lambda_1=\omega_1+\omega_2 -\pi \in (0,\pi]$, with $a_{1,\tau}=-\frac{1}{2}\frac{\rho^{|\tau|}\sigma^2}{1-\rho^2}$ and $b_{1,\tau}=0$.


In the next section, we will verify for which cyclical processes the properties of the strongly pseudo-cyclical autocovariance function match the definition of \ref{def:21}. 

\section{Review of existing cyclical processes}

To the best of our  knowledge, the literature on cyclical zero-mean stationary processes is scarce, and few such processes have been proposed so far. In this section we review them and examine briefly their properties, including also (whenever possible to show) their strongly pseudo-cyclical autocovariance functions (according to Definition \ref{def:21}). The review covers Hannan's model (see \cite{Hannan1970}), stochastic cycle model (see \cite{Harvey1985}), nth-order stochastic cycle (see \cite{HarveyTrimbur2003}), elliptical and spherical stochastic cycles (see \cite{LuatiProietti2010}), k-factor GARMA model (\cite{GARMA1998}), and fractional sinusoidal waveform process (see \cite{Proietti2023}).

\subsection{Hannan's model and its generalization} In \cite{Hannan1970}, the following model with time-varying amplitude and phase shift was proposed for seasonal time series:  
\begin{equation}
 y_t=\sum\limits_{j=1}^{6}(\alpha_{j,t}\cos(t \lambda_j) +\beta_{j,t} \sin(t \lambda_j) ), 
 \label{Hannan1970}
\end{equation}
where $\lambda_j=2\pi j /12$ and $\alpha_{j,t}  =  \rho_{j} \alpha_{j,t-1} + \eta_{j,t}$, $\beta_{j,t}  =  \rho_{j} \beta_{j,t-1} + \tilde \eta_{j,t}$,  
with $[\eta_{j,t}\,\tilde \eta_{j,t}]$ being a Gaussian white noise with the covariance matrix $\sigma_j^2 \textbf{I}_2$. Equation (\ref{Hannan1970}) can be generalized to any number $K\in \mathbb{N}$ of cyclical components $\{z_{j,t}\}$, each related to the corresponding frequency $\lambda_j \in (0,\pi]$, $j=1,2,\ldots,K$, with $ 0 < \lambda_1< \lambda_2<\ldots,\lambda_K\leq \pi$: 
\begin{equation}
 y_t=\sum\limits_{j=1}^{K}\overbrace{(\alpha_{j,t}\cos(t \lambda_j) +\beta_{j,t} \sin(t \lambda_j) )}^{z_{j,t}},
 \label{GeneralizationHannan1970}
\end{equation}
with $\alpha_{j,t}$ and $\beta_{j,t}$ being mutually independent and stationary Gaussian processes sharing the same autocovariance function $\gamma_j(\tau)$. Our Theorem \ref{tw:psd} provides the autocovariance function (which is strongly pseudo-cyclical here) and power spectral density function of $\{y_t\}$ given by  (\ref{GeneralizationHannan1970}).  Under the Gaussianity assumption, for each component $z_{j,t}$, the related amplitude features the inverse coefficient of variation of the form (\ref{ICV_Gaussian}), which limits possible applications. 
\subsection{Stochastic cycle model} After the work of \cite{Hannan1970}, an equivalent model to (\ref{GeneralizationHannan1970}) for a univariate time series $\{y_t\}$ was introduced and developed by \cite{Harvey1985}, \cite{Harvey1989}, \cite{HarveyJaeger1993}. In the cited works, this equivalent formulation of each $z_{j,t}$ in (\ref{GeneralizationHannan1970}) is referred to as a {\it stochastic cycle} $\{c_{j,t}\}$ (related to the frequency $\lambda_j$), and it has an $\text{ARMA}(2,1)$ representation of the form: 
\begin{equation}
(1-2\tilde \rho_j \cos \lambda_j L + \tilde \rho_j^2 L^2) c_{j,t}= \kappa_{j,t} - \tilde \rho_j \cos \lambda_j \kappa_{j,t-1} + \tilde \rho_j \sin \lambda_j \kappa_{j,t-1}^*, 
\end{equation}
where $|\tilde \rho_j|<1$, $L$ denotes the backshift operator (i.e., for some stochastic process $x_t$ and any integer $k$ we have $x_t L^k = x_{t-k}$), and $[\kappa_{j,t} \, \kappa_{j,t}^*]$ is a Gaussian white noise with a covariance matrix $\sigma_{\kappa_j}^2 \textbf{I}_2$. Furthermore, the stochastic cycle $\{c_{j,t}\}$ can be represented as an coordinate of a relevant bivariate VAR($1$) process, with its $2 \times 2$ autoregressive matrix being a rotation matrix, which follows easily from our Theorem \ref{Hannan_to_Harvey} under $K=1$ and $p_1=1$. Based on the same theorem, we conclude that the stochastic cycle, although largely popularized and commonly employed in business cycle analyses, does not bring anything new and is equivalent to a simple generalization of Hannan's model, given by (\ref{GeneralizationHannan1970}) under $K=1$. Hence, the relevant inverse coefficient of variation continues to share the Gaussian case restraint (see (\ref{ICV_Gaussian})).
\subsection{nth-order stochastic cycle}
To obtain a 'smoother' stochastic cycle (by stronger mass concentration around certain frequencies for power spectral density function), the concept of an {\it nth-order stochastic cycle} was introduced in \cite{HarveyTrimbur2003} (and later developed by \cite{Trimbur2006}). This generalized cycle $\{\psi_{n,t}\}$ admits the following $\text{ARMA}(2n,2n-1)$ representation:
\begin{equation}
(1-2\rho \cos \lambda L + \rho^2 L^2)^n \psi_{n,t}= \sum\limits_{j=0}^n A_j(L) (\kappa_t \cos \lambda j  -\kappa_t^*  \sin \lambda j ), 
\label{ARMA_rep_n}
\end{equation}
where $|\rho|<1$, $A_{j}(L)=L^{n-1+j} \binom{n}{j}(-\rho)^j$, and $[\kappa_t \,\,\, \kappa_t^*]$ is a Gaussian white noise with a covariance matrix $\sigma_{\kappa}^2 \boldsymbol{I}_2$. The exact form of the autocovariance function was shown in \cite{Trimbur2006}: 
\begin{equation}
E(\psi_{n,t}\psi_{n,t+\tau})=a_{\tau}(\rho) \cos \lambda \tau, 
\end{equation}
where $\lim\limits_{|\tau|\to \infty}a_{\tau}(\rho)=0$. The spectral density function follows immediately from (\ref{ARMA_rep_n}). The amplitude of the nth-order stochastic cycle is $\sqrt{\psi_{n,t}^2+\psi_{n,t}^{*2}}$, where $\psi_{n,t}$ and $\psi_{n,t}^{*}$ are zero-mean Gaussian $\text{ARMA}(2n,2n-1)$ processes with the same parameters and mutually independent Gaussian white noises (see Section 3 in \cite{Trimbur2006}). Hence, again, the inverse coefficient of variation of the amplitude suffers from the Gaussian case restraint (see (\ref{ICV_Gaussian})). Therefore, arguably, although the nth-order stochastic cycle can result in a `smoother' stochastic cycle (for an increasing $n$), and thus a 'smoother' amplitude process, the expectation of the latter equals the amplitude's standard deviation times $\sqrt{\frac{\pi}{4-\pi}}$ (see (\ref{ICV_Gaussian})), which is the very same strong limitation as for the ($n=1$) stochastic cycle.   
\subsection{Elliptical and  spherical stochastic cycles}
The stochastic cycle and nth-order stochastic cycle are related to a VAR representation, where the autoregression (or, transition) matrix is associated with rotation along a circle in the
plane. In \cite{LuatiProietti2010}, the original transition matrix was replaced by the motion of a point along an ellipse. Still, the process remains Gaussian and admits an $\text{ARMA}(2,1)$ representation. Even though it introduces a different structure of the ARMA model (than the nth-order stochastic cycle), by the representation given by Theorem \ref{tw:representation}, we get again a linear relation between the expectation and standard deviation of the amplitude. 
\subsection{$k$-factor GARMA model}
In \cite{GARMA1998} and \cite{Smallwood2003}, the following stationary model of a long-memory $k$-factor GARMA (Gegenbauer ARMA) specification was investigated: 
\begin{equation}
\psi(L)\prod\limits_{j=1}^{K} (1-2 u_i L + L^2)^{c_i} y_t = \theta(L)\epsilon_t,    
\end{equation}
with frequencies $f_i=\frac{1}{2 \pi \cos u_i}$, $j=1, ..., K$. For a GARMA process, the exact form of its spectral density can be derived. However, one of the main limitations is the lack of a closed form expression for the autocovariance function (although it seems likely that it admits a pseudo-cyclical autocovariance function of the form (\ref{pseudo-cyclical})). Despite the fact that the $k$-factor GARMA model has not been defined on the basis of the representation (\ref{eq:yt0}), it is still Gaussian with an $\text{MA}(\infty)$ representation and hence, drawing on Theorem \ref{tw:representation}, for each of the frequencies $0<f_1<f_2<\ldots<f_K \leq \pi$, the relevant cyclic component can be represented in such a way that again, a linear relation between the amplitude's expectation and standard deviation emerges.

\subsection{Fractional sinusoidal waveform process} 
Motivated by the $k$-factor GARMA model, the following fractional sinusoidal waveform process was proposed in \cite{Proietti2023} (see also earlier work:  \cite{maddanu2022modelling}):
\begin{equation}
y_t=\alpha_{t} \cos(\lambda t) + \beta_{t} \sin(\lambda t),
\label{Proietti2023}
\end{equation} 
where $\alpha_t =(1-L)^{-d}\kappa_t$ and $\beta_t=(1-L)^{-d} \kappa_t^{*}$, with $0<d<1/2$ and $[\kappa_t \, \,\, \kappa_t^*]$ being a Gaussian white noise with a covariance matrix $\sigma^2_{\kappa} \boldsymbol{I}_2$. Note that although the theoretical properties of the model were presented in the cited paper for the single-frequency case, a multiple-frequency model (obtained straightforwardly as a sum of such independent components) was considered in the empirical part of the study.

The autocovariance function of the model (\ref{Proietti2023}) assumes the form:
\begin{equation}
E(y_t y_{t+\tau})=\sigma_{\kappa}^2 \frac{\Gamma(1-2d)\Gamma(d+\tau)}{\Gamma(1+\tau-d)\Gamma(1-d)}\cos \lambda \tau,
\end{equation}
which is strongly pseudo-cyclical. The spectral density function features a vertical asymptote at the frequency $\lambda$, was evaluated in the cited work and can be also obtained using our general Theorem \ref{tw:psd}. The linear relation between expectation and standard deviation of amplitude process is obvious here.






\section{New stochastic cycle concept and its statistical properties}

In this section, we present a novel specification of a zero-mean cyclic stationary process, already mentioned in Section 1.4. We focus here on the single-frequency model, since generalizing it into the multi-frequency case (as the sum of independent components, each corresponding to a given frequency)  is straightforward.

For convenience, let us remind the equation of our model proposed by equation~(\ref{model_prep}):
$$y_t=(a+A_t)\sin[\lambda (t+ \psi + P_t)].$$
In what follows we assume that $\psi =0$ (or the sake of parameter identification), and that the amplitude process $A_t$ is stationary up to order $m$. On the other hand, the phase shift process $P_t$ can be considered either stationary (resulting in oscillations around a constant cycle length) or non-stationary (specifically, integrated of order one). Before we formulate two relevant theorems, handling the two cases, respectively, let us define that a process $\{y_t\}$ is an Almost Periodically Correlated up to order $m$ (henceforth denoted as APC($m$)) if for any $s \leq m$, and $\tau_1,\tau_2,\ldots,\tau_s \in \mathbb{Z}$, and $t \in \mathbb{Z}$, the moment $E(y_{t+\tau_1}y_{t+\tau_2}\ldots y_{t+\tau_s})$ exists and is an almost periodic function of $t$. Obviously, the APC($m$) class contains the class of stationary time series up to order $m$.



\begin{theorem}
Let $\{A_t:\, t\in \mathbb{Z}\}$ be a time series with zero mean and stationary up to order $m$, and let $\{P_t:\, t \in \mathbb{Z}\}$ be a stationary Gaussian time series. Assume also that the processes $\{A_t:\, t\in \mathbb{Z}\}$ and $\{P_t:\, t \in \mathbb{Z}\}$ are independent. Then, for any $\lambda \in (0,\pi]$, the process $\{y_t: t \in \mathbb{Z}\}$ of the form 
$$y_t=(a+A_t)\sin[\lambda (t+ P_t)]$$
is an APC($m$) time series, such that for any positive integer $s \leq m$ and any sequence of integers $\tau_1 \leq \tau_2 \leq \ldots \leq \tau_s$, we have 
\begin{equation}
E\left(\prod\limits_{j=1}^s y_{t+\tau_j}\right)=\frac{(-1)^{\lfloor \frac{s}{2} \rfloor}}{2^s} a_{\boldsymbol{\tau}}
  \sum\limits_{(e_1,e_2,\ldots,e_s) \in \boldsymbol{P}} \!\!\!e^{-\frac{1}{2}\text{Var}\left(\lambda \sum\limits_{j=1}^s e_j P_{t+\tau_j}\right)} f\left(  \lambda \sum\limits_{j=1}^s  e_j (t+\tau_j) \right)\prod\limits_{j=1}^{s}e_j,
  \nonumber
\end{equation} 
where $\boldsymbol{P}=\{-1,1\}^s$, $f$ is the sine (or cosine) function if $s$ is odd (even, respectively), and 
 \begin{equation}
  a_{\boldsymbol{\tau}} = E\left( \prod\limits_{j=1}^s(a+A_{t+\tau_j}) \right) .  
 \end{equation}
\label{tw1}
\end{theorem}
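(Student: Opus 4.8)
The plan is to use the independence of $\{A_t\}$ and $\{P_t\}$ to split the expectation into an ``amplitude factor'' and a ``phase factor'', to compute the first from stationarity of $\{A_t\}$, and to evaluate the second by a product-to-sum expansion combined with the Gaussian characteristic function. For $s\le m$ and $\tau_1\le\cdots\le\tau_s$ one has
\[
\prod_{j=1}^s y_{t+\tau_j}=\Bigg(\prod_{j=1}^s(a+A_{t+\tau_j})\Bigg)\Bigg(\prod_{j=1}^s\sin\big[\lambda(t+\tau_j+P_{t+\tau_j})\big]\Bigg),
\]
and, by independence, $E(\prod_j y_{t+\tau_j})$ equals the product of the two expectations. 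Expanding the first product, $E(\prod_j(a+A_{t+\tau_j}))$ is a linear combination, with coefficients that are powers of $a$, of moments $E(\prod_{j\in S}A_{t+\tau_j})$ with $|S|\le s\le m$; each of these exists and is independent of $t$ because $\{A_t\}$ is stationary up to order $m$, so this factor is exactly the $t$-free constant $a_{\boldsymbol\tau}$. The task is thus reduced to evaluating $E\big(\prod_{j=1}^s\sin[\lambda(t+\tau_j+P_{t+\tau_j})]\big)$, where (as is implicit in the setup) we take $E(P_t)=0$.

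For the phase factor I would substitute $\sin\theta=(e^{i\theta}-e^{-i\theta})/(2i)$ and expand the product over the sign vectors $\mathbf e=(e_1,\ldots,e_s)\in\boldsymbol P=\{-1,1\}^s$, separating the deterministic and random parts of the exponent:
\[
\prod_{j=1}^s\sin\big[\lambda(t+\tau_j+P_{t+\tau_j})\big]=\frac{(-i)^s}{2^s}\sum_{\mathbf e\in\boldsymbol P}\Bigg(\prod_{j=1}^s e_j\Bigg)\exp\!\Bigg(i\lambda\sum_{j=1}^s e_j(t+\tau_j)\Bigg)\exp\!\Bigg(i\lambda\sum_{j=1}^s e_j P_{t+\tau_j}\Bigg).
\]
Since $\{P_t\}$ is Gaussian, $(P_{t+\tau_1},\ldots,P_{t+\tau_s})$ is jointly Gaussian, so $\lambda\sum_j e_j P_{t+\tau_j}$ is a zero-mean Gaussian whose variance is $\text{Var}(\lambda\sum_j e_j P_{t+\tau_j})$ and, by stationarity of $\{P_t\}$, does not depend on $t$. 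Evaluating its characteristic function at $1$ gives $E\exp(i\lambda\sum_j e_j P_{t+\tau_j})=\exp\!\big(-\tfrac12\text{Var}(\lambda\sum_j e_j P_{t+\tau_j})\big)$, so the expectation of the phase factor is the same sum with the random exponential replaced by this Gaussian factor.

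It remains to turn the deterministic exponentials into real trigonometric functions, which is the step requiring care. As the left-hand side is real, I would pair $\mathbf e$ with $-\mathbf e$: the variance (hence the Gaussian factor) is unchanged, the deterministic exponential is conjugated, and $\prod_j e_j$ is multiplied by $(-1)^s$. For $s$ even this collapses $\exp(i\lambda\sum_j e_j(t+\tau_j))$ to $\cos(\lambda\sum_j e_j(t+\tau_j))$ and leaves the constant $(-i)^s=(-1)^{s/2}$; for $s$ odd it collapses it to $i\sin(\lambda\sum_j e_j(t+\tau_j))$ and the constant becomes $(-i)^s i=(-1)^{(s-1)/2}$. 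In both cases the prefactor is $(-1)^{\lfloor s/2\rfloor}/2^s$ and $f$ is $\cos$ for $s$ even and $\sin$ for $s$ odd; multiplying by $a_{\boldsymbol\tau}$ gives the stated formula. The moment exists because $|\prod_j\sin(\cdot)|\le1$ and the moments of $\{A_t\}$ up to order $m$ are finite, and the APC($m$) property follows since each $f(\lambda\sum_j e_j(t+\tau_j))$ is a genuinely periodic, hence almost periodic, function of $t$ when $\sum_j e_j\ne0$ and a constant otherwise, a finite sum of almost periodic functions is almost periodic, and $a_{\boldsymbol\tau}$ together with the Gaussian factors are $t$-free. The main obstacle is precisely this last bookkeeping — tracking the powers of $i$ and the sign $\prod_j e_j$ through the $\mathbf e\mapsto-\mathbf e$ pairing so that the parity-dependent constant $(-1)^{\lfloor s/2\rfloor}$ and the correct $\sin$/$\cos$ alternative drop out — which is routine but easy to get wrong.
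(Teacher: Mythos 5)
Your proposal is correct and follows essentially the same route as the paper: split the expectation by independence into the amplitude factor $a_{\boldsymbol\tau}$ and the phase factor, expand the product of sines over the sign vectors in $\{-1,1\}^s$, and evaluate the resulting Gaussian expectation, which produces the factor $e^{-\frac{1}{2}\mathrm{Var}(\lambda\sum_j e_j P_{t+\tau_j})}$ and the parity-dependent constant. The only cosmetic difference is that you work with complex exponentials and the Gaussian characteristic function, whereas the paper uses the real product-to-sum identity together with its Lemma on $E(\sin X)$ and $E(\cos X)$ for Gaussian $X$ — the same computation in slightly different clothing.
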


The theorem above shows the exact form of all moments up to $m$, which are almost periodic functions of time $t \in \mathbb{Z}$. Therefore, any spectral characteristics related with the second and higher-order spectra for $\{y_t: t \in \mathbb{Z}\}$ can be evaluated. However, since $\{y_t: t \in \mathbb{Z}\}$ is as an APC process and thus nonstationary (which may be somewhat surprising under the assumed stationarity of both $A_t$ and $P_t$), we ditch that specification from further analysis, and focus on the following case, featuring a nonstationary $P_t$, which (again, to one's surprise) yields a stationary $\{y_t\}$, instead.

\begin{theorem}
Let $\{A_t:\, t\in \mathbb{Z}\}$ be a time series with zero mean and stationary up to order $m$, and let $\{P_t:\, t \in \mathbb{Z}\}$ follow an $\text{ARIMA}(p,1,q)$ model with a Gaussian white noise. Assume also that the processes $\{A_t:\, t\in \mathbb{Z}\}$ and $\{P_t:\, t \in \mathbb{Z}\}$ are independent. Then, for any $\lambda \in (0,\pi]$, the process $\{y_t: t \in \mathbb{Z}\}$ of the form 
$$y_t=(a+A_t)\sin[\lambda (t+ P_t)]$$
is zero-mean and stationary up to order $m$, such that for any positive integer $s>1$ and any sequence of integers $\tau_1 \leq \tau_2 \leq \ldots \leq \tau_s$, we have $E\left(\prod\limits_{j=1}^s y_{t+\tau_j}\right)=0$ if $s$ is odd and 
\begin{equation}
E\left(\prod\limits_{j=1}^s y_{t+\tau_j}\right)=\frac{(-1)^{\lfloor \frac{s}{2} \rfloor}}{2^s} a_{\boldsymbol{\tau}} \!\!\!\!\!\!
  \sum\limits_{(e_1,e_2,\ldots,e_s) \in \boldsymbol{P}_0} \!\!\!\!\!\!e^{-\frac{1}{2}\text{Var}\left(\lambda \sum\limits_{j=1}^s e_j P_{t+\tau_j}\right)} \cos\left(  \lambda \sum\limits_{j=1}^s  e_j \tau_j \right) \prod\limits_{j=1}^{s}e_j
\end{equation} 
 if $s$ is even, where $P_0=\{(e_1,e_2,\ldots,e_s)\in \{-1,1\}^s: \sum\limits_{j=1}^s e_j=0\}$ and 
 \begin{equation}
  a_{\boldsymbol{\tau}} = E\left( \prod\limits_{j=1}^s(a+A_{t+\tau_j}) \right) .  
 \end{equation}
\label{tw2}
\end{theorem}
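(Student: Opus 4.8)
The plan is to evaluate the joint moment $E\big(\prod_{j=1}^s y_{t+\tau_j}\big)$ in closed form, the computation running in close parallel to the one behind Theorem~\ref{tw1}, the only new ingredient being the unit root in $\{P_t\}$. First one checks that the moment is finite: $|y_{t+\tau_j}|\le|a+A_{t+\tau_j}|$ and $E|a+A_t|^s<\infty$ for $s\le m$ (this is part of ``$\{A_t\}$ stationary up to order $m$''), so H\"older's inequality applies. Since $\{A_t\}$ and $\{P_t\}$ are independent processes, $E\big(\prod_j y_{t+\tau_j}\big)=a_{\boldsymbol{\tau}}\,E\big(\prod_j\sin[\lambda(t+\tau_j+P_{t+\tau_j})]\big)$ with $a_{\boldsymbol{\tau}}=E\big(\prod_j(a+A_{t+\tau_j})\big)$, and expanding the product shows $a_{\boldsymbol{\tau}}$ is a finite combination of joint moments of $\{A_t\}$ of order $\le s\le m$, hence finite and independent of $t$. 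Writing $\sin x=(e^{ix}-e^{-ix})/(2i)$ and multiplying out the $s$ factors gives
\[
\prod_{j=1}^s\sin[\lambda(t+\tau_j+P_{t+\tau_j})]=\frac{1}{(2i)^s}\sum_{\mathbf e\in\{-1,1\}^s}\Big(\prod_{j=1}^s e_j\Big)\exp\!\Big(i\lambda\!\sum_{j=1}^s e_j(t+\tau_j)\Big)\exp\!\Big(i\lambda\!\sum_{j=1}^s e_j P_{t+\tau_j}\Big),
\]
and, as $\{P_t\}$ is obtained from a Gaussian white noise by linear operations, every finite linear combination $\lambda\sum_j e_j P_{t+\tau_j}$ is Gaussian, so its expected exponential equals $\exp\!\big(i\lambda\sum_j e_j E P_{t+\tau_j}-\tfrac12\text{Var}(\lambda\sum_j e_j P_{t+\tau_j})\big)$ whenever that variance is finite; with $(2i)^{-s}=(-1)^{\lfloor s/2\rfloor}2^{-s}$ for even $s$ this reproduces the prefactor in the claimed identity.

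The decisive step is next. Decompose $P_{t+\tau_j}=P_{t+\tau_1}+\sum_{k=\tau_1+1}^{\tau_j}\Delta P_{t+k}$, where $\{\Delta P_t\}$ is the zero-mean stationary Gaussian $\text{ARMA}(p,q)$ increment process, so that $\sum_j e_j P_{t+\tau_j}=\big(\sum_j e_j\big)P_{t+\tau_1}+\sum_j e_j\sum_{k=\tau_1+1}^{\tau_j}\Delta P_{t+k}$. If $\sum_j e_j=0$ the level term drops out, the combination is a fixed linear functional of the stationary increments --- hence mean zero with $t$-free variance --- and moreover $\sum_j e_j(t+\tau_j)=\sum_j e_j\tau_j$, so the whole summand loses its $t$-dependence. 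If $\sum_j e_j\neq0$, the level term is a nonzero multiple of $P_{t+\tau_1}$, the level of an $I(1)$ process, whose variance is unbounded, so $\text{Var}(\lambda\sum_j e_j P_{t+\tau_j})=\infty$ and $E\big[\exp(i\lambda\sum_j e_j P_{t+\tau_j})\big]=0$; hence only the balanced sign patterns $\mathbf e\in\boldsymbol{P}_0=\{\mathbf e\in\{-1,1\}^s:\sum_j e_j=0\}$ contribute. When $s$ is odd, $\boldsymbol{P}_0=\varnothing$, so $E\big(\prod_j y_{t+\tau_j}\big)=0$ (in particular $E(y_t)=0$, the zero-mean assertion). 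When $s$ is even, pairing each $\mathbf e\in\boldsymbol{P}_0$ with $-\mathbf e$ --- which fixes $\prod_j e_j$ and conjugates $e^{i\lambda\sum_j e_j\tau_j}$ --- collapses the sum to $\sum_{\mathbf e\in\boldsymbol{P}_0}\big(\prod_j e_j\big)\cos\!\big(\lambda\sum_j e_j\tau_j\big)\exp\!\big(-\tfrac12\text{Var}(\lambda\sum_j e_j P_{t+\tau_j})\big)$, every factor of which is free of $t$; together with the prefactor this is exactly the stated formula, and its independence of $t$ (finiteness having been noted above) is precisely stationarity up to order $m$.

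I expect the main obstacle to be the step just used, namely making rigorous the claim that the level of the $I(1)$ component carries ``infinite variance'' so that $E[e^{i\lambda\sum_j e_j P_{t+\tau_j}}]$ vanishes whenever $\sum_j e_j\neq0$. One cannot realise $\{P_t:t\in\mathbb{Z}\}$ with stationary increments and a proper, $t$-independent marginal, so the cleanest route is to work throughout with the increment process together with a diffuse level (equivalently, as the $\kappa\to\infty$ limit of $\{P_t\}$ anchored by an $N(0,\kappa)$ initial value), under which the unwanted summands vanish uniformly in $t$; the rest --- the exponential expansion, the Gaussian characteristic function, and the $\mathbf e\leftrightarrow-\mathbf e$ symmetry on $\boldsymbol{P}_0$ --- is routine bookkeeping. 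A minor point worth flagging is that the $\text{ARIMA}(p,1,q)$ specification must be taken without drift (zero-mean increments); a nonzero drift would enter the argument of the cosine and force a modification of the stated formula.
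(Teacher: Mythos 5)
Your overall architecture coincides with the paper's: factor the moment into the amplitude part $a_{\boldsymbol{\tau}}$ and the phase part by independence, expand the product of sines over sign patterns $\boldsymbol{e}\in\{-1,1\}^s$ (your complex-exponential expansion is just the product-to-sum identity the paper uses), and split into balanced ($\sum_j e_j=0$) and unbalanced ($\sum_j e_j\neq 0$) patterns. Your treatment of the balanced case is fine and in fact a little more direct than the paper's: you observe that the level cancels and $\lambda\sum_j e_j P_{t+\tau_j}$ is a fixed finite linear combination of the stationary Gaussian increments, hence zero-mean Gaussian with $t$-free variance, whereas the paper reaches the same conclusion through Lemma \ref{lematQ} (the unit root in $1-L$ is cancelled by the induced MA factor $\sum_k b_k L^{\tilde\tau-\tau_k}$ when $\sum_k b_k=0$, so $S_{\boldsymbol{\tau}}$ is a stationary Gaussian ARMA process) followed by Lemma \ref{lemat1}. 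Your side remark on the no-drift specification is consistent with the paper's formulation $(1-L)\Psi(L)P_t=\Theta(L)\epsilon_t$.

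The genuine gap is exactly where you suspected it: the unbalanced case. As written, the step ``$\mathrm{Var}\big(\lambda\sum_j e_j P_{t+\tau_j}\big)=\infty$, hence $E\big[e^{i\lambda\sum_j e_j P_{t+\tau_j}}\big]=0$'' is not an argument about the process in the theorem: for any proper realization of an ARIMA$(p,1,q)$ process that variance is finite, and the corresponding term is nonzero (and $t$-dependent), so nothing vanishes identically. Your proposed repair --- a diffuse level, i.e.\ the $\kappa\to\infty$ limit of an $N(0,\kappa)$ initialization --- proves a statement about a limit of modified models rather than about $\{P_t\}$ as given. The paper closes this step with Lemma \ref{lematX}, which is the device your proposal is missing: write $W_t=W_{t-d}+\epsilon^{*}_{d,t}$, where $\epsilon^{*}_{d,t}$ accumulates the last $d$ stationary increments, condition on the innovation $\sigma$-field $d$ steps back, note that given this $\sigma$-field $\epsilon^{*}_{d,t}$ is Gaussian with conditional variance $\sigma_d^2$ not depending on $t$ and $\sigma_d^2\to\infty$ as $d\to\infty$, and conclude via Lemma \ref{lemat1} that $\big|E\big(\sin(s(t)+W_t)\big)\big|\le 2e^{-\sigma_d^2/2}$ for every $d$, hence the expectation is exactly zero. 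This keeps every variance finite, requires no limit of models, and is applied both to the unbalanced terms $II_{\boldsymbol{e}}$ and, with $s=1$, to establish $E(y_t)=0$ (your ``$\boldsymbol{P}_0=\varnothing$ for odd $s$'' shortcut for the mean rests on the same unproved vanishing claim). To complete your proof you would need to prove a statement of the type of Lemma \ref{lematX}, or rigorously justify the diffuse-limit formalization and argue it is the intended reading of the theorem; as it stands, the decisive vanishing step is asserted rather than proved.
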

It follows from Theorem~\ref{tw2} that the skewness (if exists) of the process $y_t$ is zero, \textcolor{black}{which is a somewhat surprising result here.  What is more,} for any positive integer $k$, we have (upon existence)
\begin{equation}
  E(y_t^{2k}) = E\left( (a+A_{t})^{2k} \right),
 \end{equation}
which gives the kurtosis of $y_t$:
\begin{equation}
  \text{Kurt}(y_t) = \frac{E\left( (a+A_{t})^{4} \right)}{\left(E\left( (a+A_{t})^{2} \right)\right)^2}.
 \end{equation}
Under an assumption that $A_t$ is Gaussian with variance $\sigma_{A}^2$, the kurtosis simplifies to 
\begin{equation}
  \text{Kurt}(y_t) = 3-\frac{2 a^4}{\left(a^2+\sigma _A^2\right){}^2}=3-\frac{2}{\left(1+\left(\frac{\sigma _A}{a}\right)^2\right){}^2},
 \end{equation}
 which is lower than for the normal distribution, is an increasing function of the argument $\frac{\sigma _A}{a}$, and is bounded from below by $1$.
 
Theorem~\ref{tw2} enables some special cases to consider. For example, if $\{P_t \}$ follows  $\text{ARIMA}(0,1,0)$ (i.e., a random walk: $P_t=P_{t-1} + \epsilon_t$, where $\epsilon_t$ is a zero-mean Gaussian white noise with a variance $\sigma^2$), and $A_t$ is stationary with an autocovariance function $\gamma_{A}(\tau)=\text{Cov}(A_t,A_{t+\tau})$, then for $s=2$ we get $P_0=\{(-1,1),(1,-1)\}$. Hence, for any $(e_1,e_2) \in P_0$, and any $\tau_1 \leq \tau_2$, one obtains $$\text{Var}\left(\lambda \sum\limits_{j=1}^2 e_j P_{t+\tau_j}\right)=\lambda^2(\tau_2 - \tau_1) \sigma^2$$ and $$\cos\left(  \lambda \sum\limits_{j=1}^2  e_j \tau_j \right)=\cos\left(  \lambda (\tau_2-\tau_1) \right).$$ In consequence, for any $\tau \in \mathbb{Z}$,
\begin{equation}
 E(y_t y_{t+\tau})= 2 (a^2 + \gamma_{A}(\tau)) e^{-\frac{1}{2}\lambda^2|\tau|\sigma^2} \cos(\lambda |\tau|).   
\end{equation}



\section*{Appendix}

\begin{proof}[Proof of Theorem \ref{tw21}] The proof of i) is elementary, so we skip the details. To show ii), let us assume first that $y_t$ is a zero-mean stationary time series. To derive (\ref{eq:eq}) notice now that to ensure $E(y_t)=0$ (see \ref{eq:mean}), we easily obtain that $\boldsymbol{\mu}=\boldsymbol{0}$. To show the remaining equalities from (\ref{eq:eq}) note that the first derivative (at $t$) of the autocovariance function of $\{y_t\}$ has the following form (under $\boldsymbol{\mu}=\boldsymbol{0}$):
\begin{equation}
\begin{split}
\frac{d E(y_t y_{t+\tau})}{d t} & = \lambda \left(\omega _{22}(\tau)-\omega _{11}(\tau)\right) \sin (\lambda (2 t+\tau ))\\
& + \lambda \left(\omega _{12}(\tau)+\omega _{21}(\tau)\right) \cos (\lambda (2 t+\tau )),
\end{split}
\end{equation}
and should be equal to zero to ensure the stationarity of $\{y_t\}$. This condition is met only under $\omega_{12}(\tau)=-\omega_{21}(\tau) \,\, \wedge \,\, \omega_{22}(\tau)=\omega_{11}(\tau)$, which concludes this part of the proof.

Conversely, if 
$\boldsymbol{\mu}=\boldsymbol{0} \,\, \wedge \,\, \omega_{12}(\tau)=-\omega_{21}(\tau) \,\, \wedge \,\, \omega_{22}(\tau)=\omega_{11}(\tau)$
then one easily gets that $E(y_t)=0$ with the autocovariance function  
$E(y_t y_{t+\tau})=\omega_{11}(\tau) \cos \lambda \tau  + \omega_{12}(\tau) \sin \lambda \tau,$
which does not depend on $t$. This concludes the proof. 
\end{proof}

\begin{proof}[Proof of Theorem \ref{tw_Gaussian_independent}]
To show that $\alpha_t$ and $\beta_t$ are both zero-mean Gaussian, it is sufficient to show that for any $k \in \mathbb{N}$, we have:
\begin{equation}
\begin{aligned}
E(\alpha_t^{2k}) &= E(\beta_t^{2k}) = \frac{2^k \sigma^{2k}\Gamma(\frac{1}{2}+k)}{\sqrt{\pi}}, \
E(\alpha_t^{2k-1}) &= E(\beta_t^{2k-1}) = 0.
\end{aligned}
\label{formula_moment}
\end{equation}
We prove this by mathematical induction.\\
For $k=1$, the above moment formula is obvious to ensure stationarity up to order 2 (see Theorem \ref{tw21}). Assume now that formula (\ref{formula_moment}) holds true up to some $k>1$. Then, under stationarity up to order $2k+1$ for ${y_t}$, the moment $E(y_t^{2 k+1})$ cannot depend on $t$. For this moment, we simply obtain:
\begin{equation}
\begin{split}
E(y_t^{2 k+1}) & = \sum\limits_{s=0}^{2k+1} E(\alpha_{t}^{s}) E(\beta_{t}^{2k+1-s}) \cos^{s} (\lambda t) \sin^{2k+1-s} (\lambda t) \\
& = E(\alpha_{t}^{2k+1}) \cos^{2k+1} (\lambda t) + E(\beta_{t}^{2k+1}) \sin^{2k+1} (\lambda t),
\end{split}
\end{equation}
which does not depend on $t$ if and only if $E(\alpha_{t}^{2k+1})=E(\beta_{t}^{2k+1})=0$. Under stationarity up to order $2(k+1)$ for ${y_t}$, we consider:
\begin{equation}
\begin{split}
E(y_t^{2 (k+1)}) & = \sum\limits_{s=0}^{2(k+1)} \binom{2(k+1)}{s}E(\alpha_{t}^{s}) E(\beta_{t}^{2(k+1)-s}) \cos^{s} (\lambda t) \sin^{2(k+1)-s} (\lambda t) \\
& = \sum\limits_{r=0}^{k+1} \binom{2(k+1)}{2r} E(\alpha_{t}^{2 r}) E(\beta_{t}^{2(k+1)-2r}) \cos^{2r} (\lambda t) \sin^{2(k+1)-2r} (\lambda t)\\
& = E(\alpha_{t}^{2(k+1)}) \cos^{2(k+1)} (\lambda t) + E(\beta_{t}^{2(k+1)}) \sin^{2(k+1)} (\lambda t)\\
& + \sum\limits_{r=1}^{k} \binom{2(k+1)}{2r} \frac{2^{k+1} \sigma^{2(k+1)}\Gamma(\frac{1}{2}+r)\Gamma(\frac{3}{2}+k-r)}{\pi} \cos^{2r} (\lambda t) \sin^{2(k+1-r)} (\lambda t)\\
& = E(\alpha_{t}^{2(k+1)}) \cos^{2(k+1)} (\lambda t) + E(\beta_{t}^{2(k+1)}) \sin^{2(k+1)} (\lambda t)\\
& -\frac{2^{k+1} \sigma^{2(k+1)} \Gamma \left(k+\frac{3}{2}\right) \left(\sin ^{2 k+2}(t)+\cos ^{2 k+2}(t)-1\right)}{\sqrt{\pi }}\\
& = \left( E(\alpha_{t}^{2(k+1)}) - \frac{2^{k+1} \sigma^{2(k+1)}\Gamma(\frac{1}{2}+k+1)}{\sqrt{\pi}} \right) \cos^{2(k+1)} (\lambda t) \\
& + \left( E(\beta_{t}^{2(k+1)}) - \frac{2^{k+1} \sigma^{2(k+1)}\Gamma(\frac{1}{2}+k+1)}{\sqrt{\pi}} \right) \sin^{2(k+1)} (\lambda t)\\
& + \frac{2^{k+1} \sigma^{2(k+1)}\Gamma(\frac{1}{2}+k+1)}{\sqrt{\pi}},
\end{split}
\nonumber
\end{equation}
which does not depend on $t$ if and only if $E(\alpha_{t}^{2(k+1)})=E(\beta_{t}^{2(k+1)})=\frac{2^{k+1} \sigma^{2(k+1)}\Gamma(\frac{1}{2}+k+1)}{\sqrt{\pi}}$. This completes the proof of formula (\ref{formula_moment}) and, therefore, the entire proof.
\end{proof}

\begin{proof}[Proof of Theorem \ref{strictly_stationary}]
Let $m$ be any positive integer and let $\boldsymbol{\tau}=(\tau_1,\tau_2,\ldots,\tau_{m}) \in \mathbb{Z}^{m}$ such that $\tau_1 < \tau_2 < \ldots < \tau_{m}$. For the vector $\textbf{y}_{t,\boldsymbol{\tau}}=[y_{t+\tau_1}\,\, y_{t+\tau_1}^{*}\,\,y_{t+\tau_2}\,\, y_{t+\tau_2}^{*} \ldots y_{t+\tau_{m}}\,\, y_{t+\tau_{m}}^{*} ]$, where $y_{t}^{*}=\alpha_t \cos \lambda t - \beta_t \sin \lambda t$, we have
\begin{equation}
\begin{split}
\textbf{y}_{t,\boldsymbol{\tau}}'& =\text{diag}(\textbf{R}(\lambda \tau_1),\ldots,\textbf{R}(\lambda \tau_{m})) \cdot \text{diag}(\textbf{R}(\lambda t),\ldots,\textbf{R}(\lambda t)) \cdot \textbf{S}_{t,\boldsymbol{\tau}}'\\
& = \text{diag}(\textbf{R}(\lambda \tau_1),\ldots,\textbf{R}(\lambda \tau_{m})) \cdot \textbf{A} \cdot \textbf{S}_{t,\boldsymbol{\tau}}',
\end{split}
\end{equation}
where $\textbf{A}=\text{diag}(\textbf{R}(\lambda t),\ldots,\textbf{R}(\lambda t))$. To finish the proof, it is enough to show that the distribution of $\textbf{A} \textbf{S}_{t,\boldsymbol{\tau}}'$ does not depend on $t$.
Since the matrix $\textbf{A}$ is orthonormal, the Jacobian of the transformation $[z_1\,\, z_1^{*}\,\,z_2\,\,z_2^{*}\,\,\ldots\,\,z_m\,\,z_m^{*}]'=\textbf{A} [x_1\,\, x_1^{*}\,\,x_2\,\,x_2^{*}\ldots\,\,x_m\,\,x_m^{*}]'$ is one. Elementary calculations give that $z_j^2 + z_j^{*2} = x_{j}^2+x_{j}^{*2}$ for $j=1,2,\ldots,m$. Hence the probability distribution function of $\textbf{A} \textbf{S}_{t,\boldsymbol{\tau}}'$ at point $(z_1, z_1^{*},z_2,z_2^{*},\ldots,z_m,z_m^{*}) \in \mathbb{R}^{2m}$ is
$f_{\boldsymbol{\tau}}(z_1^2+z_1^{*2},z_2^2+z_2^{*2},\ldots,z_m^2+z_m^{*2})$, which concludes the proof.
\end{proof}

\textcolor{black}{\begin{proof}[Proof of Theorem \ref{tw_representation_spherical}] To show i), it is enough to show that the characteristic function of $[\alpha_t\,\,\beta_t]$ at $\textbf{t}=[t_1,\,\, t_2] \in \mathbb{R}^2$ is a function of $\| \textbf{t} \| = \sqrt{t_1^2 + t_2^2}$. By $\psi_{\epsilon}(\|\textbf{t}\|)$ we denote the characteristic function of $[\epsilon_t\,\,\epsilon_t^*]$ at point $\textbf{t} \in \mathbb{R}^2$. Notice that the characteristic function of $[\alpha_t\,\,\beta_t]$ at $\textbf{t}$ is 
\begin{equation}
\phi(\textbf{t})=E(e^{i \textbf{t} [\alpha_t\,\,\beta_t]'})=\prod\limits_{k=0}^{\infty} E(e^{i \psi_k \textbf{t} [\epsilon_t\,\,\epsilon_t^*]'}) =  \prod\limits_{k=0}^{\infty} \phi_{\epsilon}(|\psi_k| \cdot \|\textbf{t}\|),
\end{equation}
which ends the proof of i).\\
To show ii) note that using elementary algebra it can be shown that since the rotation matrix is orthogonal (with its determinant equal one), the probability distribution function of $[\zeta_t\,\,\zeta_t^*]$ at point $(x,y) \in \mathbb{R}^2$ is $f(x^2+y^2)$ (we omit technical details). Now define $y_{t}^*=\sum\limits_{k=0}^{\infty} \psi_{k} [ \cos (\lambda k) \zeta_{t-k} -\sin (\lambda k) \zeta_{t-k}^*] $ and notice that 
\begin{equation}
\begin{split}
[y_t\,\, y_t^*]'& = \sum\limits_{k=0}^{\infty} \psi_{k} \textbf{R}(\lambda k) [\zeta_{t-k}\,\,\zeta_{t-k}^*]'=\sum\limits_{k=0}^{\infty} \psi_{k} \textbf{R}(\lambda k) \textbf{R}(\lambda (t-k))[\epsilon_{t-k}\,\,\epsilon_{t-k}^*]'\\
& = \textbf{R}(\lambda t) \sum\limits_{k=0}^{\infty} \psi_{k}  [\epsilon_{t-k}\,\,\epsilon_{t-k}^*]'= \textbf{R}(\lambda t)   [\alpha_{t}\,\,\beta_{t}^*]',
\end{split}
\end{equation}
which ends the proof of ii).\\
To show iii), it is sufficient to use the representation from ii). We skip technical details. \\
This ends the proof.
\end{proof}}

\begin{proof}[Proof of Theorem \ref{tw:22}] One easily checks that (\ref{y_rep}) holds if (\ref{alpha_rep}) is substituted into into the former. To prove the stationarity, note that for any $t, \tau \in \mathbb{Z}$, we have
\begin{equation}
\begin{split}
E([\alpha_t \,\,\, \beta_t]'[\alpha_{t+\tau} \,\,\, \beta_{t+\tau}]) & \!=\! \textbf{R} (\lambda t) E([y_t \,\,\, y_t^*]'[y_{t+\tau} \,\,\, y_{t+\tau}^*]) \textbf{R} (-\lambda t) \textbf{R} (-\lambda \tau ) \\
& \!=\!\boldsymbol{\Gamma}(\tau) \textbf{R} (-\lambda \tau),
\end{split}
\end{equation}
which concludes this part of the proof. Proving (\ref{eq:omega2}) is elementary, therefore we omit this part of the proof. This completes the proof. 
\end{proof}

\begin{proof}[Proof of Theorem \ref{tw:psd}] Note that since $y_t$ is zero-mean and $[\alpha_{t,j}\,\,\,\beta_{t,j}]$ are mutually uncorrelated, one gets
\begin{equation}
\begin{split}
& \gamma_{y}(\tau)=E(y_t y_{t+\tau})=\\
& \sum\limits_{j=1}^{K}  (\alpha_{t+\tau,j} \cos[ \lambda (t+\tau) ] + \beta_{t+\tau,j} \sin[ \lambda (t+\tau) ])(\alpha_{t,j} \cos \lambda t + \beta_{t,j} \sin \lambda t)= \\
& \sum\limits_{j=1}^{K} \gamma_j (\tau) \cos \lambda_j \tau.
\end{split}
\nonumber
\end{equation}
Hence, the power spectral density of $y_t$ is given by
\begin{equation}
\begin{split}
f_{y}(\omega) & =\sum\limits_{\tau=-\infty}^{\infty} \gamma_{y}(\tau) \cos \omega \tau = \sum\limits_{j=1}^{K} \sum\limits_{\tau = \infty}^{\infty} \gamma_{j}(\tau) \cos \lambda_j \tau \cos \omega \tau \\
& = \frac{1}{2} \sum\limits_{j=1}^{K} \big(f_{j}(\lambda_j+\omega)  + f_{j}(\lambda_j - \omega)\big),
\end{split}
\nonumber
\end{equation}
which ends the proof. 
\end{proof}
\begin{proof}[Proof of Theorem \ref{tw:representation}] Let $[\epsilon_t\,\, \epsilon_{t}^*]$ have spherical distribution with characteristic function at $z \in \mathbb{R}^2$ given by $\phi(\| z\|)$.  Then we have decomposition $[\epsilon_t\,\, \epsilon_{t}^*]=\sum\limits_{j=1}^K[\epsilon_{j,t}\,\, \epsilon_{j,t}^*]$, where $\{[\epsilon_{j,t}\,\, \epsilon_{j,t}^*]\}$ ($j=1,2,\ldots,K$) is sequence of zero-mean IID random vectors, mutually independent and having spherical distribution given by characteristic function $\phi^{k_j}(\|z\|)$, with $k_j>0$  and $\sum\limits_{j=1}^{K}k_j=K$. Then, we define processes $[y_{j,t}\,\,y_{j,t}^*]=\Psi(L)[\epsilon_{j,t}\,\, \epsilon_{j,t}^*]$, which are also mutually independent for $j=1,2,\ldots,K$. Next, for any $j=1,2,\ldots,K$, we define:
\begin{equation}
[\alpha_{j,t}\,\,\beta_{j,t}]'=\textbf{R}(-\lambda_j t)[y_{j,t}\,\,y_{j,t}^*]'.
\end{equation}
Elementary calculations yield (\ref{eq:yt}). This ends the proof.
\end{proof}

\begin{proof}[Proof of Theorem \ref{Hannan_to_Harvey}] To show i) notice that for any $j=1,2,\ldots,K$, a single component $z_{j,t}$ satisfies the bivariate model equation: 
\begin{equation}
\begin{split}
\left[
\begin{array}{c}
 z_{j,t} \\
 z_{j,t}^{*} \\
\end{array}
\right]&= \textbf{R}(t \lambda_j)   \left[
\begin{array}{c}
 \alpha_{j,t} \\
 \beta_{j,t} \\
\end{array}
\right]\\
& =\textbf{R}(t \lambda_j)  \left( \sum\limits_{i=1}^{p_j}\phi_{j,i} \left[
\begin{array}{c}
 \alpha_{j,t-i} \\
 \beta_{j,t-i} \\
\end{array}
\right] + \left[
\begin{array}{c}
 \epsilon_{j,t} \\
 \epsilon_{j,t}^* \\
\end{array}
\right]\right)\\
& =\textbf{R}(t \lambda_j)  \left( \sum\limits_{i=1}^{p_j}\phi_{j,i} \textbf{R}(-t \lambda_j+i\lambda_j)   \left[
\begin{array}{c}
 z_{j,t-i} \\
 z_{j,t-i}^* \\
\end{array}
\right] + \left[
\begin{array}{c}
 \epsilon_{j,t} \\
 \epsilon_{j,t}^* \\
\end{array}
\right]\right)\\
& =  \sum\limits_{i=1}^{p_j}\phi_{j,i} \textbf{R}(i\lambda_j)   \left[
\begin{array}{c}
 z_{j,t-i} \\
 z_{j,t-i}^* \\
\end{array}
\right] + \textbf{R}(t \lambda_j) \left[
\begin{array}{c}
 \epsilon_{j,t} \\
 \epsilon_{j,t}^* \\
\end{array}
\right],
\end{split}
\nonumber
\end{equation}
where $\textbf{R}(t \lambda_j) \left[
\begin{array}{c}
 \epsilon_{j,t} \\
 \epsilon_{j,t}^* \\
\end{array}
\right]$ is a bivariate white noise with the same distribution as $[\epsilon_{j,t} \,\, \epsilon_{j,t}^{*}]'$. \\
\textcolor{black}{To show ii) notice that from i) the vector $[z_{1,t}\,\,z_{1,t}^*\,\,z_{2,t}\,\,z_{2,t}^*\,\, \ldots \,\,z_{K,t}\,\,z_{K,t}^*]$ admits a VAR representation with the white noise of the form $\text{diag}(\boldsymbol{R}(\lambda_1 t),\boldsymbol{R}(\lambda_2 t),\ldots,\boldsymbol{R}(\lambda_K t))\boldsymbol{\epsilon}_t'$ and having the same pdf as $\boldsymbol{\epsilon}_t'$, which is a consequence of the orthogonality of the matrix $\text{diag}(\boldsymbol{R}(\lambda_1 t),\boldsymbol{R}(\lambda_2 t),\ldots,\boldsymbol{R}(\lambda_K t))$ (with its determinant equal one).} 
\end{proof}


We continue  with the following lemmas. 
\begin{lemma}
Let $X \sim N(\mu,\sigma^2)$. Then $E(\sin X)=e^{-\frac{\sigma ^2}{2}} \sin (\mu )$, and $E(\cos X)=e^{-\frac{\sigma ^2}{2}} \cos (\mu )$.
\label{lemat1}
\end{lemma}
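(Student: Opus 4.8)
\textbf{Proof proposal for Lemma \ref{lemat1}.}

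The plan is to obtain both identities at once from the characteristic function of the normal distribution. Recall that for $X\sim N(\mu,\sigma^2)$ and any $s\in\mathbb{R}$ the moment generating/characteristic function is $E(e^{isX})=e^{is\mu-\tfrac{1}{2}s^2\sigma^2}$; this is standard and may be invoked directly (it follows by completing the square in the Gaussian integral $\frac{1}{\sqrt{2\pi}\sigma}\int_{\mathbb{R}}e^{isx}e^{-(x-\mu)^2/(2\sigma^2)}\,dx$). The only analytic point to note is that $|e^{isX}|=1$, so $e^{isX}$ is integrable and the expectation is well defined.

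Next I would specialize to $s=1$, giving
\begin{equation}
E(e^{iX})=e^{i\mu-\tfrac{1}{2}\sigma^2}=e^{-\tfrac{1}{2}\sigma^2}\bigl(\cos\mu+i\sin\mu\bigr),
\end{equation}
where the last step is just Euler's formula applied to $e^{i\mu}$.

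Finally, write $e^{iX}=\cos X+i\sin X$ (Euler's formula pointwise in $\omega$), where both $\cos X$ and $\sin X$ are bounded and hence integrable. By linearity of expectation, $E(e^{iX})=E(\cos X)+iE(\sin X)$, and since $E(\cos X)$ and $E(\sin X)$ are real numbers, they are precisely the real and imaginary parts of $E(e^{iX})$. Comparing with the display above yields $E(\cos X)=e^{-\sigma^2/2}\cos\mu$ and $E(\sin X)=e^{-\sigma^2/2}\sin\mu$, which completes the proof.

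There is no real obstacle here; the lemma is elementary. If one preferred to avoid quoting the characteristic function, an alternative would be to substitute $x=\mu+\sigma z$ to reduce to the standard normal, expand $\sin(\mu+\sigma z)$ and $\cos(\mu+\sigma z)$ via the angle-addition formulas, and use $E(\cos(\sigma Z))=e^{-\sigma^2/2}$, $E(\sin(\sigma Z))=0$ for $Z\sim N(0,1)$ (the latter by symmetry), but this is strictly more work than the characteristic-function route.
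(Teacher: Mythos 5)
Your proof is correct: the characteristic function identity $E(e^{isX})=e^{is\mu-\frac{1}{2}s^2\sigma^2}$ at $s=1$, together with Euler's formula and taking real and imaginary parts, immediately gives both claimed identities, and you correctly note the integrability of the bounded integrands. The paper itself omits the proof as elementary, so there is nothing to contrast with; your argument (or the equivalent angle-addition reduction you mention) is exactly the standard route one would expect.
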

\begin{proof}
We omit an elementary proof.    
\end{proof}
\begin{lemma}
Let $P_t$ follows an $\text{ARIMA}(p,1,q)$ model of the form: 
\begin{equation}
(1-L)\Psi(L)P_t=\Theta(L) \epsilon_t,    
\end{equation}
where $\epsilon_t$ is a white noise. Let $s$ be any positive integer, $\{\tau_1,\tau_2,\ldots,\tau_s\}$ be any sequence of integers, and $\{b_1,b_2,\ldots,b_s\}$ be any sequence of real numbers. Then, for the process $Q_{t+\tilde \tau}=\sum\limits_{k=1}^{s} b_k P_{t+\tau_k}$, we have the following ARIMA representation:
\begin{equation}
(1-L)\Psi(L)Q_t=\Theta(L) \left(\sum\limits_{k=1}^{s} b_k L^{\tilde \tau - \tau_k}\right) \epsilon_t,    
\end{equation}
where $\tilde \tau =\max\limits_{i=1,2,\ldots,s}\{\tau_i\}$.
\label{lematQ}
\end{lemma}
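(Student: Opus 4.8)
The plan is to reduce the statement to a purely formal identity between lag polynomials applied to the driving noise $\epsilon_t$, with no probabilistic content beyond the defining ARIMA equation for $P_t$. First I would rewrite $Q_t$ directly in terms of $P_t$: replacing $t$ by $t-\tilde\tau$ in the definition $Q_{t+\tilde\tau}=\sum_{k=1}^{s}b_k P_{t+\tau_k}$ gives $Q_t=\sum_{k=1}^{s}b_k P_{t-\tilde\tau+\tau_k}=\sum_{k=1}^{s}b_k L^{\tilde\tau-\tau_k}P_t$. The crucial observation here is that $\tilde\tau=\max_i\tau_i\ge\tau_k$ for every $k$, so each exponent $\tilde\tau-\tau_k$ is a nonnegative integer and each $L^{\tilde\tau-\tau_k}$ is a genuine backshift; hence $Q_t$ is a well-defined finite linear combination of present and past values of $P_t$, and no inversion of $L$ is required.

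Next I would apply the autoregressive operator $(1-L)\Psi(L)$ to this expression. Since all operators involved are polynomials in $L$, they commute and distribute over the finite sum, so
\begin{equation}
(1-L)\Psi(L)Q_t=\sum_{k=1}^{s}b_k L^{\tilde\tau-\tau_k}\big((1-L)\Psi(L)P_t\big).
\nonumber
\end{equation}
Substituting the defining equation $(1-L)\Psi(L)P_t=\Theta(L)\epsilon_t$ and factoring $\Theta(L)$ out of the sum (again using commutativity of lag polynomials) yields
\begin{equation}
(1-L)\Psi(L)Q_t=\sum_{k=1}^{s}b_k L^{\tilde\tau-\tau_k}\Theta(L)\epsilon_t=\Theta(L)\left(\sum_{k=1}^{s}b_k L^{\tilde\tau-\tau_k}\right)\epsilon_t,
\nonumber
\end{equation}
which is exactly the claimed representation.

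I do not anticipate any genuine obstacle here: the argument is bookkeeping with lag operators. The one point that needs a word of care is the well-definedness of the shifts, i.e.\ that $\tilde\tau$ is chosen precisely so that all exponents $\tilde\tau-\tau_k$ are nonnegative --- this is why $\tilde\tau$ is the \emph{maximum} of the $\tau_i$ and not, say, their minimum. If one wished to be scrupulous about the convergence of the implied infinite moving-average representations, one could simply note that $Q_t$, being a finite linear combination of the (possibly nonstationary but perfectly well-defined) process $P_t$, inherits whatever convergence properties $P_t$ enjoys, so the identity holds in the same sense as the original ARIMA equation.
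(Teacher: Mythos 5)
Your proof is correct, but it takes a somewhat different route from the paper. The paper's argument first passes to the first-difference form $P_t=P_{t-1}+\frac{\Theta(L)}{\Psi(L)}\epsilon_t$, i.e.\ it inverts the stationary AR polynomial $\Psi(L)$, shows that $(1-L)Q_{t+\tilde\tau}=\sum_{k=1}^{s}b_k\frac{\Theta(L)}{\Psi(L)}\epsilon_{t+\tau_k}$, and only afterwards multiplies back by $\Psi(L)$ and rewrites the shifted innovations $\epsilon_{t+\tau_k}$ as $L^{\tilde\tau-\tau_k}\epsilon_{t+\tilde\tau}$ before undoing the time shift. You instead write $Q_t=\sum_{k=1}^{s}b_kL^{\tilde\tau-\tau_k}P_t$ directly and push the full operator $(1-L)\Psi(L)$ through the finite sum, substituting the defining equation at the end. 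Your version is the more elementary of the two: it works entirely with finite lag polynomials, so it never needs the (implicitly infinite moving-average) operator $\Theta(L)/\Psi(L)$ and hence no appeal, even tacit, to invertibility of $\Psi(L)$ or convergence of that expansion; it also makes explicit why $\tilde\tau$ must be the maximum of the $\tau_i$ (so that all exponents of $L$ are nonnegative and the right-hand side is a genuine MA polynomial), a point the paper leaves implicit. The paper's route, on the other hand, makes the ``random walk plus stationary increment'' structure of $Q$ visible along the way, which is the feature actually exploited later in Lemma on $E(\sin(s(t)+W_t))$; but as a proof of the stated ARIMA representation, both arguments are sound and yours is, if anything, tighter.
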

\begin{proof}
Note that $P_t=P_{t-1} + \frac{\Theta(L)}{\Psi(L)} \epsilon_t$. Hence, 
\begin{equation}
\begin{split}
Q_{t+\tilde \tau}& =\sum\limits_{k=1}^{s} b_k P_{t+\tau_k} = \sum\limits_{k=1}^{s} b_k P_{t+\tau_k-1} +  \sum\limits_{k=1}^{s} b_k \frac{\Theta(L)}{\Psi(L)} \epsilon_{t+\tau_k}\\
& = Q_{t+\tilde \tau-1} +  \sum\limits_{k=1}^{s} b_k \frac{\Theta(L)}{\Psi(L)} \epsilon_{t+\tau_k}.
\end{split}
\end{equation}
Therefore, 
\begin{equation}
(1-L)Q_{t+\tilde \tau}=  \sum\limits_{k=1}^{s} b_k \frac{\Theta(L)}{\Psi(L)} \epsilon_{t+\tau_k},  
\end{equation}
and finally,
\begin{equation}
(1-L)\Psi(L)Q_{t+\tilde \tau}=   \Theta(L) \left( \sum\limits_{k=1}^{s} b_k L^{\tilde \tau - \tau_k} \right) \epsilon_{t+\tilde \tau},  
\end{equation}
which ends the proof upon noticing that $Q_{t+\tilde \tau}=L^{-\tilde \tau} Q_{t}$ and $\epsilon_{t+\tilde \tau}=L^{-\tilde \tau} \epsilon_{t}$
\end{proof}
\begin{lemma}
Let $s(t)$ be any real-valued function, and $W_t$ be any $\text{ARIMA}(p,1,q)$ process with a Gaussian error term $\epsilon_t$: $(1-L)\Psi(L) W_t=\Theta(L) \epsilon_t.$ Then, $$E(\sin (s(t)+W_t))=E(\cos (s(t)+W_t))=0.$$ 
\label{lematX}
\end{lemma}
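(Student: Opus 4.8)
The plan is to use the fact that a genuine $\text{ARIMA}(p,1,q)$ process is integrated of order one, so that, measured against the innovations of the last $N$ periods, its "fresh" part is a Gaussian random variable whose variance diverges with $N$; Lemma~\ref{lemat1} then drives both trigonometric moments to $0$ in the limit. First I would write $(1-L)W_t=\psi(L)\epsilon_t$ with $\psi(L)=\Theta(L)/\Psi(L)=\sum_{j\ge 0}\psi_j L^j$, $\sum_{j\ge 0}|\psi_j|<\infty$, and record that $\psi(1)=\Theta(1)/\Psi(1)\neq 0$ — this is exactly the property distinguishing a true $\text{ARIMA}(p,1,q)$ from a stationary $\text{ARMA}$ model, and it is implicitly assumed in the statement. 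Iterating $W_t=W_{t-1}+\psi(L)\epsilon_t$ down to time $t-N$ and regrouping the innovations by date, for each $N\ge 1$ I would obtain the decomposition $W_t=M_N+V_N$, where $V_N=\sum_{i=0}^{N-1}\bigl(\sum_{j=0}^{i}\psi_j\bigr)\epsilon_{t-i}$ collects the innovations dated $t-N+1,\dots,t$, while $M_N$ involves only $\{\epsilon_s:\,s\le t-N\}$ together with whatever initial condition pins down the recursion; in particular $M_N$ and $V_N$ are independent, and $V_N$ is a finite linear combination of Gaussian white noise, hence Gaussian with mean zero.

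Next I would isolate the two facts about $V_N$ that make the argument run: its variance $\sigma_\epsilon^2\sum_{i=0}^{N-1}\bigl(\sum_{j=0}^{i}\psi_j\bigr)^2$ is deterministic, and since $\sum_{j=0}^{i}\psi_j\to\psi(1)\neq 0$ as $i\to\infty$ the summand is eventually bounded away from $0$, so $\text{Var}(V_N)\to\infty$. Conditioning on $M_N$ and applying Lemma~\ref{lemat1} with mean $s(t)+M_N$ and variance $\text{Var}(V_N)$ gives
\[
E\bigl(\sin(s(t)+W_t)\mid M_N\bigr)=e^{-\text{Var}(V_N)/2}\sin\bigl(s(t)+M_N\bigr),
\]
and the analogous identity with $\cos$ in place of $\sin$. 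Taking expectations and using $|\sin|\le 1$, $|\cos|\le 1$ yields $|E(\sin(s(t)+W_t))|\le e^{-\text{Var}(V_N)/2}$ and $|E(\cos(s(t)+W_t))|\le e^{-\text{Var}(V_N)/2}$, and letting $N\to\infty$ gives the two claimed identities.

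The main obstacle here is conceptual rather than computational. Because $W_t$ is non-stationary and, when the recursion is run from the infinite past, fails to have a finite second moment, some care is needed about the very meaning of $E(\sin(s(t)+W_t))$ and of the component $M_N$. The boundedness of $\sin$ and $\cos$ removes every moment difficulty for the quantities actually of interest, but I would still devote a sentence to arguing that, however the initial conditions of the $\text{ARIMA}(p,1,q)$ recursion are chosen, $W_{t-N}$ (hence $M_N$) is independent of $\epsilon_{t-N+1},\dots,\epsilon_t$, so the displayed bound — and therefore the conclusion $E(\sin(s(t)+W_t))=E(\cos(s(t)+W_t))=0$ — is the same for every such choice. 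This uniformity is precisely what makes the identity consistent with the "infinite variance" of $W_t$ and is ultimately the reason $\{y_t\}$ in Theorem~\ref{tw2} turns out to be stationary.
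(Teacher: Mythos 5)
Your proposal is correct and follows essentially the same route as the paper: decompose $W_t$ into the part driven by the last $N$ (the paper's $d$) innovations and an independent remainder, condition on the latter, apply Lemma~\ref{lemat1}, bound the modulus by $e^{-\mathrm{Var}(V_N)/2}$, and let the horizon tend to infinity. In fact you supply a detail the paper skips, namely that $\mathrm{Var}(V_N)\to\infty$ because the partial sums $\sum_{j=0}^{i}\psi_j$ converge to $\psi(1)=\Theta(1)/\Psi(1)\neq 0$, correctly flagging that this non-cancellation (a genuine unit root) is the implicit assumption making the lemma true.
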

\begin{proof}
Let $W_t=W_{t-1} + \tilde \epsilon_t$, where $\tilde \epsilon_t = \frac{\Theta(L)}{\Psi(L)} \epsilon_t = \sum\limits_{j=0}^{\infty} \phi_{j}\epsilon_{t-j}$ is a zero-mean Gaussian stationary time series.  Then, for any positive integer $d$, we have $W_t=W_{t-d}+\epsilon_{d,t}^*$, where 
\begin{equation}
\begin{split}
\epsilon_{d,t}^*&=\sum\limits_{s=0}^{d-1}\tilde \epsilon_{t-s}=\sum\limits_{s=0}^{d-1} \sum\limits_{j=0}^{\infty} \phi_{j}\epsilon_{t-s-j}= \sum\limits_{j=0}^{\infty} \phi_{j} \sum\limits_{s=0}^{d-1} \epsilon_{t-(s+j)}\\
&=\underbrace{\sum\limits_{k=0}^{d-1} \left(\sum\limits_{j=0}^k\phi_{k}\right) \epsilon_{t-k}}_{\zeta_{d,t}^{(1)}} + \underbrace{\sum\limits_{k=d}^{\infty} \left(\sum\limits_{j=0}^{d-1}\phi_{k}\right) \epsilon_{t-k}}_{\zeta_{d,t}^{(2)}}    
\end{split}    
\end{equation} is a zero-mean Gaussian random variable. Note that 
\begin{equation}
\epsilon_{d,t}^*| \mathcal{F}_{t-d} \sim N(\zeta_{d,t}^{(2)},\sigma_{d}^2),
\label{normal1}
\end{equation} 
where $\sigma_{d}^2=Var(\zeta_{d,t}^{(1)})$ does not depend on $t$ and (skipping technical details)  
\begin{equation}
 \lim_{d \to \infty} \sigma_{d}^2 = \infty.
 \label{varinfty}
\end{equation}
Hence, by a simple trigonometric identity, and employing (\ref{normal1}) with Lemma \ref{lemat1}, one gets: 
\begin{equation}
\begin{split}
 & |E(\sin (s(t)+W_t))| = \left|E\left(\sin[s(t)+  W_{t-d} + \epsilon_{d,t}^*] \right) \right|\\
 & = \left|E\left(\sin[s(t)+ W_{t-d}]\cos( \epsilon_{d,t}^*) + \cos[s(t)+ W_{t-d}]\sin(\epsilon_{d,t}^*) \right)\right|\\
  & = \Bigg|E\bigg(\sin[s(t) + W_{t-d}]E\big(\cos( \epsilon_{d,t}^*)\big|t-d\big)\\
  & \,\,\,\,\,\, + \cos[s(t) + W_{t-d}]E\big(\sin(  \epsilon_{d,t}^*)\big|t-d\big) \bigg)\Bigg|\\
  & = e^{-\frac{\sigma^2_{d}}{2}} \bigg|E\bigg(\sin[s(t) + W_{t-d}]\cos( \zeta_{d,t}^{(2)})\\
  & \,\,\,\,\,\, + \cos[s(t) + W_{t-d})]\sin( \zeta_{d,t}^{(2)}) \bigg)\bigg|\\
  & \leq 2 e^{-\frac{\sigma^2_{d}}{2}}.
 \end{split}
\end{equation}
Taking the limit $d \to \infty $ of both sides of the above inequality, and employing (\ref{varinfty}), we obtain $E(\sin (s(t)+W_t))=0$. The proof for $E(\cos (s(t)+W_t))$ is analogous. This ends the proof.
\end{proof}
\begin{proof}[Proof of Theorem \ref{tw1}]\textcolor{black}{
This result is a natural consequence of the product-to-sum identity and Lemma \ref{lemat1}. Therefore, only major steps are shown. Since $A_t$ and $P_t$ are independent, we have 
\begin{equation}
\begin{split}
& E(y_{y+\tau_1}y_{t+\tau_2}\cdot\ldots\cdot y_{t+\tau_s}) = E\left(\prod\limits_{k=1}^{s}(a+A_{t+\tau_k})\sin[\lambda(t+\tau_k+P_{t+\tau_k})]\right)\\
& = \underbrace{E\left(\prod\limits_{k=1}^{s}(a+A_{t+\tau_k})\right)}_{I} \underbrace{E \left(\prod\limits_{k=1}^{s}\sin[\lambda(t+\tau_k+P_{t+\tau_k})]\right)}_{II}.
\end{split}    
\end{equation}
Applying now the product-to-sum identity and Lemma \ref{lemat1} to term II, we get
\begin{equation}
\begin{split}
II & = E \left(\prod\limits_{k=1}^{s}\sin[\lambda(t+\tau_k+P_{t+\tau_k})]\right)=\\
&=\frac{(-1)^{\lfloor s/2 \rfloor}}{2^s} \sum\limits_{\boldsymbol{e} \in P} E\left(f\left(\sum\limits_{j=1}^s e_j \lambda(t+\tau_j)+\sum\limits_{j=1}^s e_jP_{t+\tau_j}  \right)\right)\\
& = \frac{(-1)^{\lfloor s/2 \rfloor}}{2^s} \sum\limits_{\boldsymbol{e} \in P}  e^{-\frac{1}{2}\text{Var}\left(\sum\limits_{j=1}^s e_jP_{t+\tau_j}\right)}   f\left(\lambda \sum\limits_{j=1}^s e_j (t+\tau_j)\right),
\end{split}    
\end{equation}
where $\boldsymbol{e}=(e_1,e_2,\ldots,e_s) \in \{-1,1\}^s$ and $f$ is a sine function if $s$ is odd, and a cosine function otherwise. This ends the proof.}
\end{proof}
\begin{proof}[Proof of Theorem \ref{tw2}]
We split the proof into two steps. First, we show that $E(y_t)=0$, and then, we prove that for any integer $s \geq 2$ and any integers $\tau_1 \leq \tau_2 \leq \ldots \leq \tau_s$,  the expectation $E(y_{y+\tau_1}y_{t+\tau_2}\cdot\ldots\cdot y_{t+\tau_s})$ exists and does not depend on $t$.\\\\
\textit{Step 1.} Note that $E(y_t) =a E\left(\sin[\lambda t+ \lambda P_t)] \right)$. Hence, using Lemma (\ref{lematX}), we obtain $E(y_t)=0$. \\\\
\textit{Step 2.} We have 
\begin{equation}
\begin{split}
& E(y_{y+\tau_1}y_{t+\tau_2}\cdot\ldots\cdot y_{t+\tau_s}) = E\left(\prod\limits_{k=1}^{s}(a+A_{t+\tau_k})\sin[\lambda(t+\tau_k+P_{t+\tau_k})]\right)\\
& = \underbrace{E\left(\prod\limits_{k=1}^{s}(a+A_{t+\tau_k})\right)}_{I} \underbrace{E \left(\prod\limits_{k=1}^{s}\sin[\lambda(t+\tau_k+P_{t+\tau_k})]\right)}_{II}.
\end{split}    
\end{equation}
Since $A_t$ is stationary up to order $m$, the term $I$ does not depend on $t$. To finish this step it is enough to show that the term $II$ also does not depend on $t$. To that end, notice that using the product-to-sum identity we get
\begin{equation}
\begin{split}
II & = E \left(\prod\limits_{k=1}^{s}\sin[\lambda(t+\tau_k+P_{t+\tau_k})]\right)=\\&=\frac{(-1)^{\lfloor s/2 \rfloor}}{2^s} \sum\limits_{\boldsymbol{e} \in P} \underbrace{E\left(f\left(\overbrace{\sum\limits_{j=1}^s e_j \lambda(t+\tau_j)}^{III_{\boldsymbol{e}}}+\overbrace{\sum\limits_{j=1}^s e_jP_{t+\tau_j}}^{S_{\boldsymbol{\tau}}}  \right)\right)}_{II_{\boldsymbol{e}}},
\end{split}    
\end{equation}
where $\boldsymbol{e}=(e_1,e_2,\ldots,e_s) \in \{-1,1\}^s$ and $f$ is a sine function if $s$ is odd, and a cosine function otherwise. Thus, to finish the proof of Step 2, it suffices to show that $II_{\boldsymbol{e}}$ does not depend on $t$ for any $\boldsymbol{e} \in P$. To that end, two cases are considered:
\begin{itemize}
\item[Case 1.] If $\sum\limits_{j=1}^s e_j=0$, then $s$ is even and $III_{\boldsymbol{e}}$ does not depend on $t$, and by Lemma \ref{lematQ}, the time series $S_{\boldsymbol{\tau}}$ is a zero-mean Gaussian $\text{ARMA}$ process. Therefore, the exact form of $II_{\boldsymbol{e}}$ follows from Lemma \ref{lemat1}, and it does not depends on $t$. The function $f$ is a cosine function here, and hence, by this Lemma, we get
\begin{equation}
II_{\boldsymbol{e}}=E(\cos(III_{\boldsymbol{e}}+S_{\boldsymbol{e}})) = \sqrt{e^{-\text{var}(S_{\boldsymbol{e}})} }\cos(III_{\boldsymbol{e}}),  
\end{equation}
which finishes the proof of Case 1. 
\item[Case 2.] If $\sum\limits_{j=1}^s e_j \not = 0$, then by Lemma \ref{lematQ}, the time series $S_{\boldsymbol{\tau}}$ is an $\text{ARIMA}(p_{\boldsymbol{\tau}},1,q_{\boldsymbol{\tau}})$ process. Thus, resorting to Lemma \ref{lematX} results in $II_{\boldsymbol{e}}=0$. 
\end{itemize}
\vspace{0.5 cm}This ends the proof of Step 2 and completes the entire proof. 
\end{proof}

\bibliographystyle{apalike}
\bibliography{bib.bib}

\end{document}